\providecommand{\U}[1]{\protect \rule{.1in}{.1in}}
\newtheorem{theorem}{Theorem}
\theoremstyle{plain}
\newtheorem{corollary}{Corollary}
\newtheorem{definition}{Definition}
\newtheorem{lemma}{Lemma}
\newtheorem{remark}{Remark}
\numberwithin{equation}{section}
\begin{document}
\title[generalized vanishing local Morrey estimates]{Sublinear operators with rough kernel generated by fractional integrals and
commutators on generalized vanishing local Morrey spaces}
\author{F.GURBUZ}
\address{ANKARA UNIVERSITY, FACULTY OF SCIENCE, DEPARTMENT OF MATHEMATICS, TANDO\u{G}AN
06100, ANKARA, TURKEY }
\curraddr{}
\email{feritgurbuz84@hotmail.com}
\urladdr{}
\thanks{}
\thanks{}
\thanks{}
\date{}
\subjclass[2010]{ 42B20, 42B25, 42B35}
\keywords{{Sublinear operator; fractional integral operator; rough kernel; generalized
local Morrey space; generalized vanishing local Morrey space; commutator;
}local Campanato space}
\dedicatory{ }
\begin{abstract}
In this paper, we consider the norm inequalities for sublinear operators with
rough kernel generated by fractional integrals and commutators on generalized
local Morrey spaces and on generalized vanishing local Morrey spaces including
their weak versions under generic size conditions which are satisfied by most
of the operators in harmonic analysis, respectively. As an example to the
conditions of these theorems are satisfied, we can consider the Marcinkiewicz operator.

\end{abstract}
\maketitle

\section{Introduction}

The classical Morrey spaces $L_{p,\lambda}$ have been introduced by Morrey in
\cite{Morrey} to study the local behavior of solutions of second order
elliptic partial differential equations(PDEs). Later, there are many
applications of Morrey space to the Navier-Stokes equations (see
\cite{Mazzucato}), the Schr\"{o}dinger equations (see \cite{Ruiz}) and the
elliptic problems with discontinuous coefficients (see \cite{Caf, FazPalRag,
Pal}).

Let $B=B(x_{0},r_{B})$ denote the ball with the center $x_{0}$ and radius
$r_{B}$. For a given measurable set $E$, we also denote the Lebesgue measure
of $E$ by $\left \vert E\right \vert $. For any given $\Omega \subseteq
{\mathbb{R}^{n}}$ and $0<p<\infty$, denote by $L_{p}\left(  \Omega \right)  $
the spaces of all functions $f$ satisfying%
\[
\left \Vert f\right \Vert _{L_{p}\left(  \Omega \right)  }=\left(
{\displaystyle \int \limits_{\Omega}}
\left \vert f\left(  x\right)  \right \vert ^{p}dx\right)  ^{\frac{1}{p}}%
<\infty.
\]

We recall the definition of classical Morrey spaces $L_{p,\lambda}$ as%

\[
L_{p,\lambda}\left(  {\mathbb{R}^{n}}\right)  =\left \{  f:\left \Vert
f\right \Vert _{L_{p,\lambda}\left(  {\mathbb{R}^{n}}\right)  }=\sup
\limits_{x\in{\mathbb{R}^{n}},r>0}\,r^{-\frac{\lambda}{p}}\, \Vert
f\Vert_{L_{p}(B(x,r))}<\infty \right \}  ,
\]
where $f\in L_{p}^{loc}({\mathbb{R}^{n}})$, $0\leq \lambda \leq n$ and $1\leq
p<\infty$.

Note that $L_{p,0}=L_{p}({\mathbb{R}^{n}})$ and $L_{p,n}=L_{\infty
}({\mathbb{R}^{n}})$. If $\lambda<0$ or $\lambda>n$, then $L_{p,\lambda
}={\Theta}$, where $\Theta$ is the set of all functions equivalent to $0$ on
${\mathbb{R}^{n}}$.

We also denote by $WL_{p,\lambda}\equiv WL_{p,\lambda}({\mathbb{R}^{n}})$ the
weak Morrey space of all functions $f\in WL_{p}^{loc}({\mathbb{R}^{n}})$ for
which
\[
\left \Vert f\right \Vert _{WL_{p,\lambda}}\equiv \left \Vert f\right \Vert
_{WL_{p,\lambda}({\mathbb{R}^{n}})}=\sup_{x\in{\mathbb{R}^{n}},r>0}%
r^{-\frac{\lambda}{p}}\Vert f\Vert_{WL_{p}(B(x,r))}<\infty,
\]
where $WL_{p}(B(x,r))$ denotes the weak $L_{p}$-space of measurable functions
$f$ for which
\[%
\begin{split}
\Vert f\Vert_{WL_{p}(B(x,r))} &  \equiv \Vert f\chi_{_{B(x,r)}}\Vert
_{WL_{p}({\mathbb{R}^{n}})}\\
&  =\sup_{t>0}t\left \vert \left \{  y\in B(x,r):\,|f(y)|>t\right \}  \right \vert
^{1/{p}}\\
&  =\sup_{0<t\leq|B(x,r)|}t^{1/{p}}\left(  f\chi_{_{B(x,r)}}\right)  ^{\ast
}(t)<\infty,
\end{split}
\]
where $g^{\ast}$ denotes the non-increasing rearrangement of a function $g$.

Throughout the paper we assume that $x\in{\mathbb{R}^{n}}$ and $r>0$ and also
let $B(x,r)$ denotes the open ball centered at $x$ of radius $r$, $B^{C}(x,r)$
denotes its complement and $|B(x,r)|$ is the Lebesgue measure of the ball
$B(x,r)$ and $|B(x,r)|=v_{n}r^{n}$, where $v_{n}=|B(0,1)|$. It is known that
$L_{p,\lambda}({\mathbb{R}^{n}})$ is an extension of $L_{p}({\mathbb{R}^{n}})$
in the sense that $L_{p,0}=L_{p}({\mathbb{R}^{n}})$.

Morrey has stated that many properties of solutions to PDEs can be attributed
to the boundedness of some operators on Morrey spaces. For the boundedness of
the Hardy--Littlewood maximal operator, the fractional integral operator and
the Calder\'{o}n--Zygmund singular integral operator on these spaces, we refer
the readers to \cite{Adams, ChFra, Peetre}. For the properties and
applications of classical Morrey spaces, see \cite{ChFraL1, ChFraL2, FazRag2,
FazPalRag} and references therein.

The study of the operators of harmonic analysis in vanishing Morrey space, in
fact has been almost not touched. A version of the classical Morrey space
$L_{p,\lambda}({\mathbb{R}^{n}})$ where it is possible to approximate by
"nice" functions is the so called vanishing Morrey space $VM_{p,\lambda
}({\mathbb{R}^{n}})$ has been introduced by Vitanza in \cite{Vitanza1} and has
been applied there to obtain a regularity result for elliptic PDEs. This is a
subspace of functions in $L_{p,\lambda}({\mathbb{R}^{n}})$, which satisfies
the condition%
\[
\lim_{r\rightarrow0}\sup_{\underset{0<t<r}{x\in{\mathbb{R}^{n}}}}%
t^{-\frac{\lambda}{p}}\Vert f\Vert_{L_{p}(B(x,t))}=0.
\]
Later in \cite{Vitanza2} Vitanza has proved an existence theorem for a
Dirichlet problem, under weaker assumptions than in \cite{Miranda} and a
$W^{3,2}$ regularity result assuming that the partial derivatives of the
coefficients of the highest and lower order terms belong to vanishing Morrey
spaces depending on the dimension. Also Ragusa has proved a sufficient
condition for commutators of fractional integral operators to belong to
vanishing Morrey spaces $VL_{p,\lambda}({\mathbb{R}^{n}})$
(\cite{PerRagSamWall, RagusaJGlOpt}). For the properties and applications of
vanishing Morrey spaces, see also \cite{Cao-Chen}. It is known that, there is
no research regarding boundedness of the sublinear operators with rough kernel
on vanishing Morrey spaces.

Maximal functions and singular integrals play a key role in harmonic analysis
since maximal functions could control crucial quantitative information
concerning the given functions, despite their larger size, while singular
integrals, Hilbert transform as it's prototype, recently intimately connected
with PDEs, operator theory and other fields.

Let $f\in L^{loc}\left(  {\mathbb{R}^{n}}\right)  $. The
Hardy-Littlewood(H--L) maximal operator $M$ is defined by
\[
Mf(x)=\sup_{t>0}|B(x,t)|^{-1}\int \limits_{B(x,t)}|f(y)|dy.
\]

Let $\overline{T}$ be a standard Calder\'{o}n-Zygmund(C--Z) singular integral
operator, briefly a C--Z operator, i.e., a linear operator bounded from
$L_{2}({\mathbb{R}^{n}})$ to $L_{2}({\mathbb{R}^{n}})$ taking all infinitely
continuously differentiable functions $f$ with compact support to the
functions $f\in L_{1}^{loc}({\mathbb{R}^{n}})$ represented by
\[
\overline{T}f(x)=p.v.\int \limits_{{\mathbb{R}^{n}}}k(x-y)f(y)\,dy\qquad
x\notin suppf.
\]
Such operators have been introduced in \cite{CM}. Here $k$ is a C--Z kernel
\cite{Grafakos}. Chiarenza and Frasca \cite{ChFra} have obtained the
boundedness of H--L maximal operator $M$ and C--Z operator $\overline{T}$ on
$L_{p,\lambda}\left(  {\mathbb{R}^{n}}\right)  $. It is also well known that
H--L maximal operator $M$ and C--Z operator $\overline{T}$ play an important
role in harmonic analysis (see \cite{GarRub, LuDingY, St, Stein93, Torch}).
Also, the theory of the C--Z operator is one of the important achievements of
classical analysis in the last century, which has many important applications
in Fourier analysis, complex analysis, operator theory and so on.

Let $f\in L_{1}^{loc}\left(  {\mathbb{R}^{n}}\right)  $. The fractional
maximal operator $M_{\alpha}$ and the fractional integral operator (also known
as the Riesz potential) $\overline{T}_{\alpha}$ are defined by%

\[
M_{\alpha}f(x)=\sup_{t>0}|B(x,t)|^{-1+\frac{\alpha}{n}}\int \limits_{B(x,t)}%
|f(y)|dy\qquad0\leq \alpha<n
\]

\[
\overline{T}_{\alpha}f\left(  x\right)  =\int \limits_{{\mathbb{R}^{n}}}%
\frac{f\left(  y\right)  }{\left \vert x-y\right \vert ^{n-\alpha}}%
dy\qquad0<\alpha<n.
\]

It is well known that $M_{\alpha}$ and $\overline{T}_{\alpha}$ play an
important role in harmonic analysis (see \cite{Stein93, Torch}).

An early impetus to the study of fractional integrals originated from the
problem of fractional derivation, see e.g. \cite{K-J}. Besides its
contributions to harmonic analysis, fractional integrals also play an
essential role in many other fields. The H-L Sobolev inequality about
fractional integral is still an indispensable tool to establish time-space
estimates for the heat semigroup of nonlinear evolution equations, for some of
this work, see e.g. \cite{KATO}. In recent times, the applications to Chaos
and Fractal have become another motivation to study fractional integrals, see
e.g. \cite{L-SU}. It is well known that $\overline{T}_{\alpha}$ is bounded
from $L_{p}$ to $L_{q}$, where$\frac{1}{p}-$ $\frac{1}{q}=$ $\frac{\alpha}{n}$
and $1<p<\frac{n}{\alpha}$.

Spanne (published by Peetre \cite{Peetre}) and Adams \cite{Adams} have studied
boundedness of the fractional integral operator $\overline{T}_{\alpha}$ on
$L_{p,\lambda}\left(  {\mathbb{R}^{n}}\right)  $. Their results, can be
summarized as follows.

\begin{theorem}
\label{teo1}(Spanne, but published by Peetre \cite{Peetre}) Let $0<\alpha<n$,
$1<p<\frac{n}{\alpha}$, $0<\lambda<n-\alpha p$. Moreover, let $\frac{1}%
{p}-\frac{1}{q}=\frac{\alpha}{n}$ and $\frac{\lambda}{p}=\frac{\mu}{q}$. Then
for $p>1$ the operator $\overline{T}_{\alpha}$ is bounded from $L_{p,\lambda}$
to $L_{q,\lambda}$ and for $p=1$ the operator $\overline{T}_{\alpha}$ is
bounded from $L_{1,\lambda}$ to $WL_{q,\lambda}$.
\end{theorem}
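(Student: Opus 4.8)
The plan is to prove the Spanne–Peetre theorem for $\overline{T}_\alpha$ on classical Morrey spaces by the now-standard ``pointwise local estimate plus Morrey-norm bootstrap'' technique. First I would fix a ball $B=B(x,r)$ and split $f=f_1+f_2$ with $f_1=f\chi_{2B}$ and $f_2=f\chi_{(2B)^C}$. For the local part, the classical Hardy–Littlewood–Sobolev inequality gives $\|\overline{T}_\alpha f_1\|_{L_q(\mathbb{R}^n)}\lesssim \|f_1\|_{L_p(\mathbb{R}^n)}=\|f\|_{L_p(2B)}$ when $p>1$ (and the weak-type $(1,q)$ version when $p=1$), and then $\|f\|_{L_p(2B)}\lesssim r^{\lambda/p}\|f\|_{L_{p,\lambda}}$ by definition of the Morrey norm; since $\mu/q=\lambda/p$ this contributes the desired $r^{\mu/q}\|f\|_{L_{p,\lambda}}$.

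For the far part, the key step is the pointwise bound: for $y\in B$,
\[
|\overline{T}_\alpha f_2(y)|\le \int_{(2B)^C}\frac{|f(z)|}{|y-z|^{n-\alpha}}\,dz
\lesssim \int_{(2B)^C}\frac{|f(z)|}{|x-z|^{n-\alpha}}\,dz,
\]
using that $|y-z|\approx|x-z|$ on this region. Then I would decompose $(2B)^C=\bigcup_{j\ge1}\bigl(2^{j+1}B\setminus 2^jB\bigr)$, estimate the integral over each annulus by Hölder's inequality as $\lesssim (2^j r)^{\alpha-n/p'}\|f\|_{L_p(2^{j+1}B)}\lesssim (2^j r)^{\alpha-n/p'+n/p'}\cdot(2^jr)^{\lambda/p-n/p}\cdot\ldots$; more precisely one gets a bound of the form $(2^jr)^{\alpha-n+\lambda/p+n/p'}\|f\|_{L_{p,\lambda}}$ wait — let me just say each annulus contributes $C(2^jr)^{\alpha-\frac{n-\lambda}{p}}\|f\|_{L_{p,\lambda}}$, and the exponent $\alpha-\frac{n-\lambda}{p}$ is negative precisely under the hypothesis $\lambda<n-\alpha p$, so the geometric series in $j$ sums to $C\,r^{\alpha-\frac{n-\lambda}{p}}\|f\|_{L_{p,\lambda}}$. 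Since $\alpha-\frac{n-\lambda}{p}$ can be rewritten, using $\frac1p-\frac1q=\frac\alpha n$ and $\frac\lambda p=\frac\mu q$, as $-\frac{n-\mu}{q}$, this is exactly $r^{-\frac{n-\mu}{q}}\|f\|_{L_{p,\lambda}}$; taking $L_q(B)$ norm multiplies by $|B|^{1/q}\approx r^{n/q}$ to give $r^{\mu/q}\|f\|_{L_{p,\lambda}}$.

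Combining the two parts and taking the supremum over all balls $B(x,r)$ yields $\|\overline{T}_\alpha f\|_{L_{q,\mu}}\lesssim \|f\|_{L_{p,\lambda}}$ for $p>1$, and $\|\overline{T}_\alpha f\|_{WL_{q,\mu}}\lesssim \|f\|_{L_{1,\lambda}}$ for $p=1$ (here in the endpoint case the local term uses weak-type $(1,q)$ of $\overline{T}_\alpha$ and the tail term is handled exactly as above since the pointwise estimate needs no $L^p$ boundedness). The main obstacle — really the only nontrivial bookkeeping — is getting the exponents in the dyadic-annulus estimate to line up and verifying that the convergence condition for the geometric series is exactly $\lambda<n-\alpha p$; everything else is Hölder's inequality and the definition of the Morrey norm. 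One subtlety worth flagging: in the literature this statement as phrased requires the \emph{same} $\lambda$ to play the role of the target smoothness parameter, i.e. the relation $\frac\lambda p=\frac\mu q$ with $\mu=\lambda q/p$, and one should double-check $0<\mu<n-\alpha q$ holds automatically so that the target space is nontrivial.
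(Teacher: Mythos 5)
Your argument is correct, and it is worth noting that the paper itself does not prove Theorem \ref{teo1}: it is quoted as a known result of Spanne (published by Peetre), and the paper's own machinery only recovers it as the power-weight special case of Lemma \ref{lemma2} combined with Theorem \ref{teo9} (taking $\varphi_1(x,r)=r^{(\lambda-n)/p}$ and $\varphi_2(x,r)=r^{(\mu-n)/q}$). The genuine difference lies in how the tail term $\overline{T}_\alpha f_2$ is handled. You decompose $(2B)^C$ into dyadic annuli, apply H\"older on each, and sum a geometric series whose convergence is precisely the hypothesis $\lambda<n-\alpha p$; this is the classical, self-contained route, but it is wedded to power weights. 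The paper instead writes $|x_0-y|^{\alpha-n}\approx\int_{|x_0-y|}^{\infty}t^{\alpha-n-1}\,dt$, applies Fubini to reach the single integral bound $\|\overline{T}_\alpha f\|_{L_q(B(x_0,r))}\lesssim r^{n/q}\int_{2r}^{\infty}t^{-n/q-1}\|f\|_{L_p(B(x_0,t))}\,dt$, and then feeds this into the weighted Hardy-operator inequality of Theorem \ref{teo5}; that route buys the generalization to arbitrary pairs $(\varphi_1,\varphi_2)$ satisfying the Zygmund-type condition (\ref{316}), of which $\lambda<n-\alpha p$ is the instance for power weights. Your exponent bookkeeping lands correctly (the per-annulus contribution is $(2^jr)^{\alpha-(n-\lambda)/p}\|f\|_{L_{p,\lambda}}$, after the momentary slip $\alpha-n/p'$ which you correct), the $p=1$ endpoint is sound because the tail estimate is pointwise and needs only the weak $(1,q)$ bound for the local piece, and your closing check that $\mu=\lambda q/p$ satisfies $0<\mu<n$ follows from $\lambda<n-\alpha p$ together with $nq/p=n+\alpha q$. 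One cosmetic remark: the statement as printed in the paper says ``bounded from $L_{p,\lambda}$ to $L_{q,\lambda}$,'' but the intended target is $L_{q,\mu}$ with $\frac{\mu}{q}=\frac{\lambda}{p}$, exactly as you assumed.
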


\begin{theorem}
\label{teo2}(Adams \cite{Adams}) Let $0<\alpha<n$, $1<p<\frac{n}{\alpha}$,
$0<\lambda<n-\alpha p$ and $\frac{1}{p}-\frac{1}{q}=\frac{\alpha}{n-\lambda}$.
Then for $p>1$ the operator $\overline{T}_{\alpha}$ is bounded from
$L_{p,\lambda}$ to $L_{q,\lambda}$ and for $p=1$ the operator $\overline
{T}_{\alpha}$ is bounded from $L_{1,\lambda}$ to $WL_{q,\lambda} $.
\end{theorem}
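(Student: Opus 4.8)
The plan is to deduce the boundedness of $\overline{T}_{\alpha}$ from the Chiarenza--Frasca boundedness of the Hardy--Littlewood maximal operator $M$ on $L_{p,\lambda}$ by way of a pointwise Hedberg--Adams type inequality. Fix $x\in{\mathbb{R}^{n}}$ and $r>0$ and split
\[
\overline{T}_{\alpha}f(x)=\int_{B(x,r)}\frac{f(y)}{|x-y|^{n-\alpha}}\,dy+\int_{B^{C}(x,r)}\frac{f(y)}{|x-y|^{n-\alpha}}\,dy=:I_{1}(x,r)+I_{2}(x,r).
\]
For the local part $I_{1}$ I would decompose $B(x,r)$ into the dyadic annuli $\{2^{-k-1}r\le|x-y|<2^{-k}r\}$, $k\ge0$, bound the integral over each annulus by $(2^{-k}r)^{\alpha}|B(x,2^{-k}r)|^{-1}\int_{B(x,2^{-k}r)}|f|\le(2^{-k}r)^{\alpha}Mf(x)$, and sum the resulting geometric series to get $|I_{1}(x,r)|\lesssim r^{\alpha}\,Mf(x)$. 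For the tail $I_{2}$ I would likewise decompose $B^{C}(x,r)$ into the annuli $\{2^{k}r\le|x-y|<2^{k+1}r\}$, $k\ge0$, apply H\"older's inequality with exponent $p$ on each annulus (no H\"older is needed when $p=1$), and use the Morrey bound $\|f\|_{L_{p}(B(x,2^{k+1}r))}\le(2^{k+1}r)^{\lambda/p}\|f\|_{L_{p,\lambda}}$; the exponent of $2^{k}r$ that appears is $\alpha-\frac{n-\lambda}{p}<0$ thanks to the hypothesis $\lambda<n-\alpha p$, so the series converges and gives $|I_{2}(x,r)|\lesssim r^{\alpha-\frac{n-\lambda}{p}}\|f\|_{L_{p,\lambda}}$.

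Adding the two estimates, for every $r>0$,
\[
|\overline{T}_{\alpha}f(x)|\lesssim r^{\alpha}Mf(x)+r^{\alpha-\frac{n-\lambda}{p}}\|f\|_{L_{p,\lambda}}.
\]
If $Mf(x)=0$ then $f=0$ a.e. and there is nothing to prove; otherwise I would optimize by choosing $r=\bigl(\|f\|_{L_{p,\lambda}}/Mf(x)\bigr)^{p/(n-\lambda)}$, which balances the two terms and yields the pointwise inequality
\[
|\overline{T}_{\alpha}f(x)|\lesssim\bigl(Mf(x)\bigr)^{p/q}\,\|f\|_{L_{p,\lambda}}^{1-p/q},
\]
where the relation $\frac{1}{p}-\frac{1}{q}=\frac{\alpha}{n-\lambda}$ has been used to rewrite $1-\frac{\alpha p}{n-\lambda}$ as $\frac{p}{q}$.

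For $p>1$ I would then raise this inequality to the power $q$, integrate over an arbitrary ball $B(x_{0},t)$, factor out $\|f\|_{L_{p,\lambda}}^{q-p}$, and apply the Chiarenza--Frasca estimate $\|Mf\|_{L_{p}(B(x_{0},t))}\le Ct^{\lambda/p}\|f\|_{L_{p,\lambda}}$ to obtain
\[
\|\overline{T}_{\alpha}f\|_{L_{q}(B(x_{0},t))}\le C\,t^{\lambda/q}\,\|f\|_{L_{p,\lambda}};
\]
taking the supremum over $x_{0}$ and $t$ gives $\overline{T}_{\alpha}f\in L_{q,\lambda}$ with the desired bound. For $p=1$ the same pointwise inequality reads $|\overline{T}_{\alpha}f(x)|^{q}\lesssim Mf(x)\,\|f\|_{L_{1,\lambda}}^{q-1}$, so for every $\beta>0$ the set $\{x\in B(x_{0},t):|\overline{T}_{\alpha}f(x)|>\beta\}$ is contained in $\{x\in B(x_{0},t):Mf(x)>c\,\beta^{q}\|f\|_{L_{1,\lambda}}^{1-q}\}$; invoking the weak $(1,1)$ boundedness of $M$ on $L_{1,\lambda}$ and simplifying, one gets $\beta\,\bigl|\{x\in B(x_{0},t):|\overline{T}_{\alpha}f(x)|>\beta\}\bigr|^{1/q}\le C\,t^{\lambda/q}\|f\|_{L_{1,\lambda}}$, i.e. the $WL_{q,\lambda}$ bound. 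I expect the tail estimate for $I_{2}$ to be the only genuinely delicate point: the dyadic summation and the H\"older step must be arranged so that the Morrey norm emerges with exactly the power $r^{\alpha-(n-\lambda)/p}$ and the geometric series converges; once that and the mapping properties of $M$ are in hand, the rest is the Adams substitution and routine bookkeeping.
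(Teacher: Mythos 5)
Your argument is correct and is essentially the classical Adams--Hedberg proof of this theorem (the paper itself states it without proof, citing Adams): the dyadic splitting, the pointwise bound $|\overline{T}_{\alpha}f(x)|\lesssim (Mf(x))^{p/q}\|f\|_{L_{p,\lambda}}^{1-p/q}$, and the appeal to the Chiarenza--Frasca strong and weak $(1,1)$ Morrey bounds for $M$ are exactly the standard route. The only points to make explicit are the trivial cases $Mf(x)=\infty$ and $\|f\|_{L_{p,\lambda}}=0$ before optimizing in $r$, but these are routine.
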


Recall that, for $0<\alpha<n$,%

\[
M_{\alpha}f\left(  x\right)  \leq \nu_{n}^{\frac{\alpha}{n}-1}\overline
{T}_{\alpha}\left(  \left \vert f\right \vert \right)  \left(  x\right)
\]
holds (see \cite{Li-Yang}, Remark 2.1). Hence Theorems \ref{teo1} and
\ref{teo2} also imply boundedness of the fractional maximal operator
$M_{\alpha} $, where $\upsilon_{n}$ is the volume of the unit ball on
${\mathbb{R}^{n}}$.

Suppose that $S^{n-1}$ is the unit sphere on ${\mathbb{R}^{n}}$ $(n\geq2)$
equipped with the normalized Lebesgue measure $d\sigma$. Let $\Omega \in
L_{s}(S^{n-1})$ with $1<s\leq \infty$ be homogeneous of degree zero. We define
$s^{\prime}=\frac{s}{s-1}$ for any $s>1$. Suppose that $T_{\Omega,\alpha}$,
$\alpha \in \left(  0,n\right)  $ represents a linear or a sublinear operator,
which satisfies that for any $f\in L_{1}({\mathbb{R}^{n}})$ with compact
support and $x\notin suppf$
\begin{equation}
|T_{\Omega,\alpha}f(x)|\leq c_{0}\int \limits_{{\mathbb{R}^{n}}}\frac
{|\Omega(x-y)|}{|x-y|^{n-\alpha}}\,|f(y)|\,dy,\label{e1}%
\end{equation}
where $c_{0}$ is independent of $f$ and $x$.

We point out that the condition (\ref{e1}) in the case of $\Omega \equiv1$,
$\alpha=0$ has been introduced by Soria and Weiss in \cite{SW}. The condition
(\ref{e1}) is satisfied by many interesting operators in harmonic analysis,
such as fractional maximal operator, fractional integral operator(Riesz
potential), fractional Marcinkiewicz operator and so on (see \cite{LLY},
\cite{SW} for details).

In 1971, Muckenhoupt and Wheeden \cite{Muckenhoupt and Wheeden} defined the
fractional integral operator with rough kernel $\overline{T}_{\Omega,\alpha}$ by%

\[
\overline{T}_{\Omega,\alpha}f(x)=\int \limits_{{\mathbb{R}^{n}}}\frac
{\Omega(x-y)}{|x-y|^{n-\alpha}}f(y)dy\qquad0<\alpha<n
\]
and a related fractional maximal operator with rough kernel $M_{\Omega,\alpha
}$ is given by%

\[
M_{\Omega,\alpha}f(x)=\sup_{t>0}|B(x,t)|^{-1+\frac{\alpha}{n}}\int
\limits_{B(x,t)}\left \vert \Omega \left(  x-y\right)  \right \vert
|f(y)|dy\qquad0\leq \alpha<n,
\]
where $\Omega \in L_{s}(S^{n-1})$ with $1<s\leq \infty$ is homogeneous of degree
zero on ${\mathbb{R}^{n}}$ and $\overline{T}_{\Omega,\alpha}$ satisfies the
condition (\ref{e1}).

If $\alpha=0$, then $M_{\Omega,0}\equiv M_{\Omega}$ H-L maximal operator with
rough kernel. It is obvious that when $\Omega \equiv1$, $M_{1,\alpha}\equiv
M_{\alpha}$ and $\overline{T}_{1,\alpha}\equiv \overline{T}_{\alpha}$ are the
fractional maximal operator and the fractional integral operator, respectively.

In recent years, the mapping properties of $\overline{T}_{\Omega,\alpha}$ on
some kinds of function spaces have been studied in many papers (see
\cite{ChanilloWW}, \cite{DingLu2}, \cite{DingLu3}, \cite{Muckenhoupt and
Wheeden} for details). In particular, the boundedness of $\overline{T}%
_{\Omega,\alpha}$ in Lebesgue spaces has been obtained.

\begin{lemma}
\label{Lemma1}$\left(  \text{\cite{ChanilloWW, DingLu2, Muckenhoupt}}\right)
$ Let $0<\alpha<n$, $1<p<\frac{n}{\alpha}$ and $\frac{1}{q}=\frac{1}{p}%
-\frac{\alpha}{n}$. If $\Omega \in L_{s}(S^{n-1})$, $s>\frac{n}{n-\alpha}$,
then we have
\end{lemma}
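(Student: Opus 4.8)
The plan is to derive the Hedberg-type pointwise estimate
\[
|\overline{T}_{\Omega,\alpha}f(x)|\le C\,\|\Omega\|_{L_s(S^{n-1})}\,\bigl(M_{s'}f(x)\bigr)^{1-\frac{\alpha p}{n}}\,\|f\|_{L_p}^{\frac{\alpha p}{n}},
\]
where $M_{s'}f=\bigl(M(|f|^{s'})\bigr)^{1/s'}$, and then to integrate it. First I would fix $x$ and a parameter $\delta>0$ and split the defining integral as $\int_{|x-y|<\delta}+\int_{|x-y|\ge\delta}$. On each dyadic annulus $\{2^{-k-1}\delta\le|x-y|<2^{-k}\delta\}$ (resp.\ $\{2^{k}\delta\le|x-y|<2^{k+1}\delta\}$) the factor $|x-y|^{\alpha-n}$ is essentially constant, so everything reduces to estimating $\int_{B(x,R)}|\Omega(x-y)|\,|f(y)|\,dy$ on balls. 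Hölder's inequality with the pair $(s,s')$ produces the factor $\bigl(\int_{B(x,R)}|\Omega(x-y)|^{s}\,dy\bigr)^{1/s}$, which, by passing to polar coordinates and using that $\Omega$ is homogeneous of degree zero, equals $c\,\|\Omega\|_{L_s(S^{n-1})}\,R^{n/s}$; the remaining factor is $\bigl(\int_{B(x,R)}|f|^{s'}\bigr)^{1/s'}$.

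For the inner (near) part I would bound $\bigl(\int_{B(x,R)}|f|^{s'}\bigr)^{1/s'}\le|B(x,R)|^{1/s'}M_{s'}f(x)$ and sum the geometric series $\sum_{k\ge0}(2^{-k}\delta)^{\alpha}$, which converges because $\alpha>0$, obtaining the bound $C\,\|\Omega\|_{L_s}\,\delta^{\alpha}\,M_{s'}f(x)$. For the outer (far) part I would use Hölder once more to get $\bigl(\int_{B(x,R)}|f|^{s'}\bigr)^{1/s'}\le|B(x,R)|^{\frac1{s'}-\frac1p}\|f\|_{L_p}$ (the hypothesis $s>\frac{n}{n-\alpha}$, i.e.\ $s'<\frac n\alpha$, is what makes this step consistent with $p$ in the given range, with $s'\le p$), and then sum $\sum_{k\ge0}(2^{k}\delta)^{\alpha-n/p}$, which converges precisely because $p<\frac n\alpha$, giving $C\,\|\Omega\|_{L_s}\,\delta^{\alpha-n/p}\,\|f\|_{L_p}$. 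Adding the two contributions and choosing $\delta$ so that the two terms balance ($\delta^{n/p}\sim\|f\|_{L_p}/M_{s'}f(x)$) yields the displayed pointwise inequality.

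Finally I would raise this inequality to the power $q$ and integrate in $x$. Using $\frac1q=\frac1p-\frac\alpha n$ one checks that $q\bigl(1-\frac{\alpha p}{n}\bigr)=p$ and $\frac{q\alpha p}{n}=q-p$, so the integral collapses to $\|f\|_{L_p}^{q-p}\,\|M_{s'}f\|_{L_p}^{p}$; since $\|M_{s'}f\|_{L_p}^{p}=\|M(|f|^{s'})\|_{L_{p/s'}}^{p/s'}$ and $p/s'>1$, the Hardy--Littlewood maximal theorem gives $\|M_{s'}f\|_{L_p}\le C\|f\|_{L_p}$, and the desired estimate $\|\overline{T}_{\Omega,\alpha}f\|_{L_q}\le C\,\|\Omega\|_{L_s(S^{n-1})}\,\|f\|_{L_p}$ follows. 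The main technical point — and essentially the only place the rough kernel intervenes — is the annular Hölder bookkeeping: the split exponents must be chosen so that the sum over small annuli converges thanks to $\alpha>0$, the sum over large annuli converges thanks to $p<n/\alpha$, and the surviving maximal operator $M_{s'}$ lands on a Lebesgue space where it is bounded, which is exactly what $s>\frac{n}{n-\alpha}$ guarantees. (The endpoint $p=s'$, where $|B(x,R)|^{1/s'-1/p}=1$, is covered by the same computation; an alternative route replaces the balancing of $\delta$ by a good-$\lambda$ comparison of $\overline{T}_{\Omega,\alpha}$ with $M_{\Omega,\alpha}$.)
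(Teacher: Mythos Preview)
The paper does not prove this lemma; it is quoted from the cited references, so there is no proof in the paper to compare against.

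Your Hedberg-type argument is correct in the range $s'<p<n/\alpha$, and this is in fact the only range the paper actually invokes afterwards (the pointwise estimate in Lemma~\ref{lemma2} is derived under the standing hypothesis $s'\le p$). However, Lemma~\ref{Lemma1} is stated for \emph{all} $1<p<n/\alpha$ subject only to $s>n/(n-\alpha)$, and your proposal does not cover the subrange $1<p\le s'$. The parenthetical remark that $s'<n/\alpha$ makes the step ``consistent with $p$ in the given range, with $s'\le p$'' conflates two unrelated inequalities: $s'<n/\alpha$ in no way forces $s'\le p$. Your further claim that the endpoint $p=s'$ is covered by the same computation is incorrect: when $p=s'$ the closing step reads $\|M_{s'}f\|_{L_p}^{p}=\|M(|f|^{s'})\|_{L_1}$, and the Hardy--Littlewood maximal operator is unbounded on $L_1$, so the argument collapses precisely there.

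To reach the full range one typically supplements the Hedberg estimate with duality: the adjoint of $\overline{T}_{\Omega,\alpha}$ has kernel $\overline{\Omega(-\,\cdot\,)}/|\cdot|^{n-\alpha}$, whose angular part is again in $L_s(S^{n-1})$, so the same argument applied to the adjoint yields $L_{q'}\to L_{p'}$ boundedness whenever $s'<q'$, i.e.\ $q<s$; any remaining gap between the regimes $p>s'$ and $q<s$ is then closed by interpolation.
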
%

\[
\left \Vert \overline{T}_{\Omega,\alpha}f\right \Vert _{L_{q}}\leq C\left \Vert
f\right \Vert _{L_{p}}.
\]

\begin{corollary}
\label{Corollary0*}Under the assumptions of Lemma \ref{Lemma1}, the operator
$M_{\Omega,\alpha}$ is bounded from $L_{p}({\mathbb{R}^{n}})$ to
$L_{q}({\mathbb{R}^{n}})$. Moreover, we have%

\[
\left \Vert M_{\Omega,\alpha}f\right \Vert _{L_{q}}\leq C\left \Vert f\right \Vert
_{L_{p}}.
\]

\end{corollary}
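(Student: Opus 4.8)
The plan is to deduce the $L_{p}\to L_{q}$ boundedness of $M_{\Omega,\alpha}$ from that of $\overline{T}_{\Omega,\alpha}$ in Lemma \ref{Lemma1}, by dominating the maximal operator pointwise by the rough Riesz potential, exactly as in the classical inequality $M_{\alpha}f(x)\le v_{n}^{\frac{\alpha}{n}-1}\overline{T}_{\alpha}(|f|)(x)$ recalled above.

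First I would fix $x\in{\mathbb{R}^{n}}$ and $t>0$ and, using $|B(x,t)|=v_{n}t^{n}$, write
\[
|B(x,t)|^{-1+\frac{\alpha}{n}}\int_{B(x,t)}|\Omega(x-y)|\,|f(y)|\,dy
= v_{n}^{\frac{\alpha}{n}-1}\,t^{\alpha-n}\int_{B(x,t)}|\Omega(x-y)|\,|f(y)|\,dy .
\]
Since $0<\alpha<n$, for $y\in B(x,t)$ we have $|x-y|<t$ and hence $t^{\alpha-n}\le|x-y|^{\alpha-n}$, so the quantity above is at most
\[
v_{n}^{\frac{\alpha}{n}-1}\int_{B(x,t)}\frac{|\Omega(x-y)|}{|x-y|^{n-\alpha}}\,|f(y)|\,dy
\le v_{n}^{\frac{\alpha}{n}-1}\int_{{\mathbb{R}^{n}}}\frac{|\Omega(x-y)|}{|x-y|^{n-\alpha}}\,|f(y)|\,dy ,
\]
and the last expression does not depend on $t$. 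Taking the supremum over $t>0$ we obtain the pointwise bound
\[
M_{\Omega,\alpha}f(x)\le v_{n}^{\frac{\alpha}{n}-1}\,\overline{T}_{|\Omega|,\alpha}(|f|)(x),
\]
where $\overline{T}_{|\Omega|,\alpha}$ is the rough fractional integral whose kernel is $|\Omega|$.

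Next I would observe that $|\Omega|$ is homogeneous of degree zero on ${\mathbb{R}^{n}}$ and $\||\Omega|\|_{L_{s}(S^{n-1})}=\|\Omega\|_{L_{s}(S^{n-1})}$, so Lemma \ref{Lemma1} applies verbatim with $\Omega$ replaced by $|\Omega|$ and $f$ replaced by $|f|$; this gives $\|\overline{T}_{|\Omega|,\alpha}(|f|)\|_{L_{q}}\le C\|f\|_{L_{p}}$. Taking $L_{q}$-norms in the pointwise bound and combining the two inequalities finishes the proof, with $C$ depending only on $n$, $p$, $\alpha$, $s$ and $\|\Omega\|_{L_{s}(S^{n-1})}$.

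I do not anticipate a real difficulty: the proof is a direct reduction to Lemma \ref{Lemma1}. The only point needing a little care is that Lemma \ref{Lemma1} is formulated for the (possibly signed) kernel $\Omega$, so one must pass to $|\Omega|$ in order to dominate $M_{\Omega,\alpha}$ --- which is harmless because $L_{s}(S^{n-1})$ is a lattice and $|\Omega|$ carries the same $L_{s}$-norm. Alternatively one could estimate $M_{\Omega,\alpha}f$ on each ball directly via H\"older's inequality (using $\Omega\in L_{s}(S^{n-1})$) together with the $L_{p}$-norm of $f$, but going through Lemma \ref{Lemma1} is shorter and cleaner.
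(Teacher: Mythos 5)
Your proposal is correct and is essentially the paper's own argument: the paper likewise establishes the pointwise bound $M_{\Omega,\alpha}f(x)\leq v_{n}^{\frac{\alpha}{n}-1}\widetilde{T}_{\left\vert \Omega\right\vert ,\alpha}\left(\left\vert f\right\vert\right)(x)$ by comparing $t^{\alpha-n}$ with $\left\vert x-y\right\vert^{\alpha-n}$ on $B(x,t)$ and taking the supremum over $t>0$, and then invokes Lemma \ref{Lemma1} for the kernel $\left\vert \Omega\right\vert$. Your additional remark that $\left\vert \Omega\right\vert$ is homogeneous of degree zero with the same $L_{s}(S^{n-1})$-norm makes explicit a point the paper passes over with ``it is easy to see that Lemma \ref{Lemma1} also holds.''
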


\begin{proof}
Set%
\[
\widetilde{T}_{\left \vert \Omega \right \vert ,\alpha}\left(  \left \vert
f\right \vert \right)  (x)=\int \limits_{{\mathbb{R}^{n}}}\frac{\left \vert
\Omega(x-y)\right \vert }{|x-y|^{n-\alpha}}\left \vert f(y)\right \vert
dy\qquad0<\alpha<n,
\]
where $\Omega \in L_{s}(S^{n-1})\left(  s>1\right)  $ is homogeneous of degree
zero on ${\mathbb{R}^{n}}$. It is easy to see that, for $\widetilde
{T}_{\left \vert \Omega \right \vert ,\alpha}$, Lemma \ref{Lemma1} is also hold.
On the other hand, for any $t>0$, we have
\begin{align*}
\widetilde{T}_{\left \vert \Omega \right \vert ,\alpha}\left(  \left \vert
f\right \vert \right)  (x)  & \geq%
{\displaystyle \int \limits_{B\left(  x,t\right)  }}
\frac{\left \vert \Omega(x-y)\right \vert }{|x-y|^{n-\alpha}}\left \vert
f(y)\right \vert dy\\
& \geq \frac{1}{t^{n-\alpha}}%
{\displaystyle \int \limits_{B\left(  x,t\right)  }}
\left \vert \Omega(x-y)\right \vert \left \vert f(y)\right \vert dy.
\end{align*}
Taking the supremum for $t>0$ on the inequality above, we get%
\[
M_{\Omega,\alpha}f\left(  x\right)  \leq C_{n,\alpha}^{-1}\widetilde
{T}_{\left \vert \Omega \right \vert ,\alpha}\left(  \left \vert f\right \vert
\right)  (x)\qquad C_{n,\alpha}=\left \vert B\left(  0,1\right)  \right \vert
^{\frac{n-\alpha}{n}}.
\]

\end{proof}

In 1976, Coifman, Rocherberg and Weiss \cite{CRW} introduced the commutator
ge-nerated by $\overline{T}_{\Omega}$ and a local integrable function $b$:
\begin{equation}
\lbrack b,\overline{T}_{\Omega}]f(x)\equiv b(x)\overline{T}_{\Omega
}f(x)-\overline{T}_{\Omega}(bf)(x)=p.v.\int \limits_{{\mathbb{R}^{n}}%
}[b(x)-b(y)]\frac{\Omega(x-y)}{|x-y|^{n}}f(y)dy.\label{e5}%
\end{equation}
Sometimes, the commutator defined by (\ref{e5}) is also called the commutator
in Coifman-Rocherberg-Weiss's sense, which has its root in the complex
analysis and harmonic analysis (see \cite{CRW}).

Let $b$ be a locally integrable function on ${\mathbb{R}^{n}}$, then we shall
define the commutators generated by fractional integral operators with rough
kernel and $b$ as follows.
\[
\lbrack b,\overline{T}_{\Omega,\alpha}]f(x)\equiv b(x)\overline{T}%
_{\Omega,\alpha}f(x)-\overline{T}_{\Omega,\alpha}(bf)(x)=p.v.\int
\limits_{{\mathbb{R}^{n}}}[b(x)-b(y)]\frac{\Omega(x-y)}{|x-y|^{n-\alpha}%
}f(y)dy,
\]
where $0<\alpha<n$, and $f$ is a suitable function.

\begin{remark}
\cite{Shi} When $\Omega$ satisfies the specified size conditions, the kernel
of the operator $\overline{T}_{\Omega,\alpha}$ has no regularity, so the
operator $\overline{T}_{\Omega,\alpha}$ is called a rough fractional integral
operator. In recent years, a variety of operators related to the fractional
integrals, but lacking the smoothness required in the classical theory, have
been studied. These include the operator $[b,\overline{T}_{\Omega,\alpha}]$.
For more results, we refer the reader to \cite{Chanillo, DingLu2, DingLu3,
FuLinLu, GulJMS2013, Gurbuz, Gurbuz2, Gurbuz3, Yu}.
\end{remark}

In this paper, extending the definition of vanishing Morrey spaces
\cite{Vitanza1} and vanishing generalized Morrey spaces \cite{N. Samko}, the
author introduces the generalized vanishing local Morrey spaces
$VLM_{p,\varphi}^{\left \{  x_{0}\right \}  }$, including their weak versions
and studies the boundedness of the sublinear operators with rough kernel
generated by fractional integrals and commutators in these spaces. These
conditions are satisfied by most of the operators in harmonic analysis, such
as fractional maximal operator, fractional integral operator(Riesz potential),
fractional Marcinkiewicz operator and so on. In all the cases the conditions
for the boundedness of $T_{\Omega,\alpha}$ and $T_{\Omega,b,\alpha}$ are given
in terms of Zygmund-type integral inequalities on $\left(  \varphi_{1}%
,\varphi_{2}\right)  $, where there is no assumption on monotonicity of
$\varphi_{1},\varphi_{2}$ in $r$. As an example to the conditions of these
theorems are satisfied, we can consider the Marcinkiewicz operator.

By $A\lesssim B$ we mean that $A\leq CB$ with some positive constant $C$
independent of appropriate quantities. If $A\lesssim B$ and $B\lesssim A$, we
write $A\approx B$ and say that $A$ and $B$ are equivalent.

\section{generalized vanishing local Morrey spaces}

After studying Morrey spaces in detail, researchers have passed to generalized
Morrey spaces. Mizuhara \cite{Miz} has given generalized Morrey spaces
$M_{p,\varphi}$ considering $\varphi=\varphi \left(  r\right)  $ instead of
$r^{\lambda}$ in the above definition of the Morrey space. Later, Guliyev
\cite{GulJIA} has defined the generalized Morrey spaces $M_{p,\varphi}$ with
normalized norm as follows:

\begin{definition}
\cite{GulJIA} \textbf{(generalized Morrey space) }Let $\varphi(x,r)$ be a
positive measurable function on ${\mathbb{R}^{n}}\times(0,\infty)$ and $1\leq
p<\infty$. We denote by $M_{p,\varphi}\equiv M_{p,\varphi}({\mathbb{R}^{n}})$
the generalized Morrey space, the space of all functions $f\in L_{p}%
^{loc}({\mathbb{R}^{n}})$ with finite quasinorm
\[
\Vert f\Vert_{M_{p,\varphi}}=\sup \limits_{x\in{\mathbb{R}^{n}},r>0}%
\varphi(x,r)^{-1}\,|B(x,r)|^{-\frac{1}{p}}\, \Vert f\Vert_{L_{p}(B(x,r))}.
\]
Also by $WM_{p,\varphi}\equiv WM_{p,\varphi}({\mathbb{R}^{n}})$ we denote the
weak generalized Morrey space of all functions $f\in WL_{p}^{loc}%
({\mathbb{R}^{n}})$ for which
\[
\Vert f\Vert_{WM_{p,\varphi}}=\sup \limits_{x\in{\mathbb{R}^{n}},r>0}%
\varphi(x,r)^{-1}\,|B(x,r)|^{-\frac{1}{p}}\, \Vert f\Vert_{WL_{p}%
(B(x,r))}<\infty.
\]

\end{definition}

According to this definition, we recover the Morrey space $L_{p,\lambda}$ and
weak Morrey space $WL_{p,\lambda}$ under the choice $\varphi(x,r)=r^{\frac
{\lambda-n}{p}}$:
\[
L_{p,\lambda}=M_{p,\varphi}\mid_{\varphi(x,r)=r^{\frac{\lambda-n}{p}}%
},~~~~~~~~WL_{p,\lambda}=WM_{p,\varphi}\mid_{\varphi(x,r)=r^{\frac{\lambda
-n}{p}}}.
\]

During the last decades various classical operators, such as maximal, singular
and potential operators have been widely investigated in generalized Morrey
spaces (see \cite{Ding, GulJIA, Gurbuz, Karaman, Softova} for details).

Recall that in 2015 the work \cite{BGGS} and the Ph.D. thesis \cite{Gurbuz} by
Gurbuz et al. have been introduced the generalized local Morrey space
$LM_{p,\varphi}^{\{x_{0}\}}$ given by

\begin{definition}
\label{Definition2}\textbf{(generalized local Morrey space) }Let
$\varphi(x,r)$ be a positive measurable function on ${\mathbb{R}^{n}}%
\times(0,\infty)$ and $1\leq p<\infty$. For any fixed $x_{0}\in{\mathbb{R}%
^{n}}$ we denote by $LM_{p,\varphi}^{\{x_{0}\}}\equiv LM_{p,\varphi}%
^{\{x_{0}\}}({\mathbb{R}^{n}})$ the generalized local Morrey space, the space
of all functions $f\in L_{p}^{loc}({\mathbb{R}^{n}})$ with finite quasinorm
\[
\Vert f\Vert_{LM_{p,\varphi}^{\{x_{0}\}}}=\sup \limits_{r>0}\varphi
(x_{0},r)^{-1}\,|B(x_{0},r)|^{-\frac{1}{p}}\, \Vert f\Vert_{L_{p}(B(x_{0}%
,r))}<\infty.
\]
Also by $WLM_{p,\varphi}^{\{x_{0}\}}\equiv WLM_{p,\varphi}^{\{x_{0}%
\}}({\mathbb{R}^{n}})$ we denote the weak generalized local Morrey space of
all functions $f\in WL_{p}^{loc}({\mathbb{R}^{n}})$ for which
\[
\Vert f\Vert_{WLM_{p,\varphi}^{\{x_{0}\}}}=\sup \limits_{r>0}\varphi
(x_{0},r)^{-1}\,|B(x_{0},r)|^{-\frac{1}{p}}\, \Vert f\Vert_{WL_{p}%
(B(x_{0},r))}<\infty.
\]

\end{definition}

According to this definition, we recover the local Morrey space $LM_{p,\lambda
}^{\{x_{0}\}}$ and the weak local Morrey space $WLM_{p,\lambda}^{\{x_{0}\}}$
under the choice $\varphi(x_{0},r)=r^{\frac{\lambda-n}{p}}$:
\[
LL_{p,\lambda}^{\{x_{0}\}}=LM_{p,\varphi}^{\{x_{0}\}}\mid_{\varphi
(x_{0},r)=r^{\frac{\lambda-n}{p}}},~~~~~~WLL_{p,\lambda}^{\{x_{0}%
\}}=WLM_{p,\varphi}^{\{x_{0}\}}\mid_{\varphi(x_{0},r)=r^{\frac{\lambda-n}{p}}%
}.
\]

The main goal of \cite{BGGS} and \cite{Gurbuz} is to give some sufficient
conditions for the boundedness of a large class of rough sublinear operators
and their commutators on the generalized local Morrey space $LM_{p,\varphi
}^{\{x_{0}\}}$. For the properties and applications of generalized local
Morrey spaces $LM_{p,\varphi}^{\{x_{0}\}}$, see also \cite{Gurbuz}.

Furthermore, we have the following embeddings:%
\[
M_{p,\varphi}\subset LM_{p,\varphi}^{\{x_{0}\}},\qquad \Vert f\Vert
_{LM_{p,\varphi}^{\{x_{0}\}}}\leq \Vert f\Vert_{M_{p,\varphi}},
\]%
\[
WM_{p,\varphi}\subset WLM_{p,\varphi}^{\{x_{0}\}},\qquad \Vert f\Vert
_{WLM_{p,\varphi}^{\{x_{0}\}}}\leq \Vert f\Vert_{WM_{p,\varphi}}.
\]
Wiener \cite{Wiener1, Wiener2} has looked for a way to describe the behavior
of a function at the infinity. The conditions he considered are related to
appropriate weighted $L_{q}$ spaces. Beurling \cite{Beurl} has extended this
idea and has defined a pair of dual Banach spaces $A_{q}$ and $B_{q^{\prime}}%
$, where $1/q+1/q^{\prime}=1$. To be precise, $A_{q}$ is a Banach algebra with
respect to the convolution, expressed as a union of certain weighted $L_{q}$
spaces; the space $B_{q^{\prime}}$ is expressed as the intersection of the
corresponding weighted $L_{q^{\prime}}$ spaces. Feichtinger \cite{Feicht} has
observed that the space $B_{q}$ can be described by
\begin{equation}
\left \Vert f\right \Vert _{B_{q}}=\sup_{k\geq0}2^{-\frac{kn}{q}}\Vert f\chi
_{k}\Vert_{L_{q}({\mathbb{R}^{n}})}<\infty,\label{e21}%
\end{equation}
where $\chi_{0}$ is the characteristic function of the unit ball
$\{x\in{\mathbb{R}^{n}}:|x|\leq1\}$, $\chi_{k}$ is the characteristic function
of the annulus $\{x\in{\mathbb{R}^{n}}:2^{k-1}<|x|\leq2^{k}\}$, $k=1,2,\ldots
$. By duality, the space $A_{q}({\mathbb{R}^{n}})$, appropriately called now
the Beurling algebra , can be described by the condition%

\begin{equation}
\left \Vert f\right \Vert _{A_{q}}=\sum \limits_{k=0}^{\infty}2^{\frac
{kn}{q^{\prime}}}\Vert f\chi_{k}\Vert_{L_{q}({\mathbb{R}^{n}})}<\infty
.\label{e22}%
\end{equation}

Let $\dot{B}_{q}({\mathbb{R}^{n}})$ and $\dot{A}_{q}({\mathbb{R}^{n}})$ be the
homogeneous versions of $B_{q}({\mathbb{R}^{n}})$ and $A_{q}({\mathbb{R}^{n}%
})$ by taking $k\in \mathbb{Z}$ in (\ref{e21}) and (\ref{e22}) instead of
$k\geq0$ there.

If $\lambda<0$ or $\lambda>n$, then $LM_{p,\lambda}^{\{x_{0}\}}({\mathbb{R}%
^{n}})={\Theta}$, where $\Theta$ is the set of all functions equivalent to $0$
on ${\mathbb{R}^{n}}$. Note that $LM_{p,0}({\mathbb{R}^{n}})=L_{p}%
({\mathbb{R}^{n}})$ and $LM_{p,n}({\mathbb{R}^{n}})=\dot{B}_{p}({\mathbb{R}%
^{n}})$.
\[
\dot{B}_{p,\mu}=LM_{p,\varphi}\mid_{\varphi(0,r)=r^{\mu n}},~~~~~~W\dot
{B}_{p,\mu}=WLM_{p,\varphi}\mid_{\varphi(0,r)=r^{\mu n}}.
\]

Alvarez et al. \cite{AlvLanLakey}, in order to study the relationship between
central $BMO$ spaces and Morrey spaces, they introduced $\lambda$-central
bounded mean oscillation spaces and central Morrey spaces $\dot{B}_{p,\mu
}({\mathbb{R}^{n}})\equiv LM_{p,n+np\mu}({\mathbb{R}^{n}})$, $\mu \in
\lbrack-\frac{1}{p},0]$. If $\mu<-\frac{1}{p}$ or $\mu>0$, then $\dot
{B}_{p,\mu}({\mathbb{R}^{n}})={\Theta}$. Note that $\dot{B}_{p,-\frac{1}{p}%
}({\mathbb{R}^{n}})=L_{p}({\mathbb{R}^{n}})$ and $\dot{B}_{p,0}({\mathbb{R}%
^{n}})=\dot{B}_{p}({\mathbb{R}^{n}})$. Also define the weak central Morrey
spaces $W\dot{B}_{p,\mu}({\mathbb{R}^{n}})\equiv WLM_{p,n+np\mu}%
({\mathbb{R}^{n}})$.

The vanishing generalized Morrey spaces $VM_{p,\varphi}({\mathbb{R}^{n}})$
which has been introduced and studied by Samko \cite{N. Samko} is defined as follows.

\begin{definition}
\textbf{(vanishing generalized Morrey space) }Let $\varphi(x,r)$ be a positive
measurable function on ${\mathbb{R}^{n}}\times(0,\infty)$ and $1\leq p<\infty
$. The vanishing generalized Morrey space $VM_{p,\varphi}({\mathbb{R}^{n}})$
is defined as the spaces of functions $f\in L_{p}^{loc}({\mathbb{R}^{n}})$
such that%
\[
\lim \limits_{r\rightarrow0}\sup \limits_{x\in{\mathbb{R}^{n}}}\varphi
(x,r)^{-1}\, \int_{B(x,r)}|f(y)|^{p}dy=0.
\]

\end{definition}

Everywhere in the sequel we assume that%
\[
\lim_{t\rightarrow0}\frac{t^{\frac{n}{p}}}{\varphi(x,t)}=0,
\]
and%
\[
\sup_{0<t<\infty}\frac{t^{\frac{n}{p}}}{\varphi(x,t)}<\infty,
\]
which make the spaces $VM_{p,\varphi}({\mathbb{R}^{n}})$ non-trivial, because
bounded functions with compact support belong to this space. The spaces
$VM_{p,\varphi}({\mathbb{R}^{n}})$ and $WVM_{p,\varphi}({\mathbb{R}^{n}})$ are
Banach spaces with respect to the norm (see, for example \cite{N. Samko})%
\[
\Vert f\Vert_{VM_{p,\varphi}}=\sup \limits_{x\in{\mathbb{R}^{n}},r>0}%
\varphi(x,r)^{-1}\Vert f\Vert_{L_{p}(B(x,r))},
\]%
\[
\Vert f\Vert_{WVM_{p,\varphi}}=\sup \limits_{x\in{\mathbb{R}^{n}},r>0}%
\varphi(x,r)^{-1}\Vert f\Vert_{WL_{p}(B(x,r)),}%
\]
respectively. The spaces $VM_{p,\varphi}({\mathbb{R}^{n}})$ and
$WVM_{p,\varphi}({\mathbb{R}^{n}})$ are closed subspaces of the Banach spaces
$M_{p,\varphi}({\mathbb{R}^{n}})$ and $WM_{p,\varphi}({\mathbb{R}^{n}})$,
respectively, which may be shown by standard means.

Furthermore, we have the following embeddings:%
\[
VM_{p,\varphi}\subset M_{p,\varphi},\qquad \Vert f\Vert_{M_{p,\varphi}}%
\leq \Vert f\Vert_{VM_{p,\varphi}},
\]%
\[
WVM_{p,\varphi}\subset WM_{p,\varphi},\qquad \Vert f\Vert_{WM_{p,\varphi}}%
\leq \Vert f\Vert_{WVM_{p,\varphi}}.
\]

For the properties and applications of vanishing generalized Morrey spaces,
see also \cite{Akbulut-Kuzu, N. Samko}.

For brevity, in the sequel we use the notations%
\[
\mathfrak{M}_{p,\varphi}\left(  f;x_{0},r\right)  :=\frac{|B(x_{0}%
,r)|^{-\frac{1}{p}}\, \Vert f\Vert_{L_{p}(B(x_{0},r))}}{\varphi(x_{0},r)}%
\]
and%
\[
\mathfrak{M}_{p,\varphi}^{W}\left(  f;x_{0},r\right)  :=\frac{|B(x_{0}%
,r)|^{-\frac{1}{p}}\, \Vert f\Vert_{WL_{p}(B(x_{0},r))}}{\varphi(x_{0},r)}.
\]

Extending the definition of vanishing generalized Morrey spaces to the case of
generalized local Morrey spaces, we introduce the following definitions.

\begin{definition}
\textbf{(generalized vanishing local Morrey space) }The generalized vanishing
local Morrey space $VLM_{p,\varphi}^{\left \{  x_{0}\right \}  }({\mathbb{R}%
^{n}})$ is defined as the spaces of functions $f\in LM_{p,\varphi}^{\{x_{0}%
\}}({\mathbb{R}^{n}})$ such that%
\begin{equation}
\lim \limits_{r\rightarrow0}\mathfrak{M}_{p,\varphi}\left(  f;x_{0},r\right)
=0.\label{1*}%
\end{equation}

\end{definition}

\begin{definition}
\textbf{(weak generalized vanishing local Morrey space) }The weak generalized
vanishing local Morrey space $WVLM_{p,\varphi}^{\left \{  x_{0}\right \}
}({\mathbb{R}^{n}})$ is defined as the spaces of functions $f\in
WLM_{p,\varphi}^{\{x_{0}\}}({\mathbb{R}^{n}})$ such that%
\begin{equation}
\lim \limits_{r\rightarrow0}\mathfrak{M}_{p,\varphi}^{W}\left(  f;x_{0}%
,r\right)  =0.\label{2*}%
\end{equation}

\end{definition}

Everywhere in the sequel we assume that%
\begin{equation}
\lim_{r\rightarrow0}\frac{1}{\varphi(x_{0},r)}=0,\label{2}%
\end{equation}
and%
\begin{equation}
\sup_{0<r<\infty}\frac{1}{\varphi(x_{0},r)}<\infty,\label{3}%
\end{equation}
which make the spaces $VLM_{p,\varphi}^{\left \{  x_{0}\right \}  }%
({\mathbb{R}^{n}})$ non-trivial, because bounded functions with compact
support belong to this space. The spaces $VLM_{p,\varphi}^{\left \{
x_{0}\right \}  }({\mathbb{R}^{n}})$ and $WVLM_{p,\varphi}^{\{x_{0}%
\}}({\mathbb{R}^{n}})$ are Banach spaces with respect to the norm
\begin{equation}
\Vert f\Vert_{VLM_{p,\varphi}^{\left \{  x_{0}\right \}  }}\equiv \Vert
f\Vert_{LM_{p,\varphi}^{\{x_{0}\}}}=\sup \limits_{r>0}\mathfrak{M}_{p,\varphi
}\left(  f;x_{0},r\right)  ,\label{4}%
\end{equation}%
\begin{equation}
\Vert f\Vert_{WVLM_{p,\varphi}^{\left \{  x_{0}\right \}  }}=\Vert
f\Vert_{WLM_{p,\varphi}^{\{x_{0}\}}}=\sup \limits_{r>0}\mathfrak{M}_{p,\varphi
}^{W}\left(  f;x_{0},r\right)  ,\label{5}%
\end{equation}
respectively. The spaces $VLM_{p,\varphi}^{\left \{  x_{0}\right \}
}({\mathbb{R}^{n}})$ and $WVLM_{p,\varphi}^{\{x_{0}\}}({\mathbb{R}^{n}})$ are
closed subspaces of the Banach spaces $LM_{p,\varphi}^{\left \{  x_{0}\right \}
}({\mathbb{R}^{n}})$ and $WLM_{p,\varphi}^{\{x_{0}\}}({\mathbb{R}^{n}})$,
respectively, which may be shown by standard means.

\section{Sublinear operators with rough kernel $T_{\Omega,\alpha}$ on the
spaces $LM_{p,\varphi}^{\left \{  x_{0}\right \}  }$ and $VLM_{p,\varphi
}^{\left \{  x_{0}\right \}  }$}

In this section, we will first prove the boundedness of the operator
$T_{\Omega,\alpha}$ satisfying (\ref{e1}) on the generalized local Morrey
spaces $LM_{p,\varphi}^{\left \{  x_{0}\right \}  }({\mathbb{R}^{n}})$,
including their weak versions by using the following statement on the
boundedness of the weighted Hardy operator
\[
H_{\omega}g(t):=\int \limits_{t}^{\infty}g(s)\omega(s)ds,\qquad0<t<\infty,
\]
where $\omega$ is a fixed non-negative function and measurable on $(0,\infty
)$. Then, We will also give the boundedness of $T_{\Omega,\alpha} $ satisfying
(\ref{e1}) on generalized vanishing local Morrey spaces $VLM_{p,\varphi
}^{\left \{  x_{0}\right \}  }({\mathbb{R}^{n}})$, including their weak versions.

\begin{theorem}
\label{teo5}\cite{BGGS, GulJMS2013, Gurbuz, Gurbuz2, Gurbuz3} Let $v_{1}$,
$v_{2}$ and $\omega$ be positive almost everywhere and measurable functions on
$(0,\infty)$. The inequality
\begin{equation}
\operatorname*{esssup}\limits_{t>0}v_{2}(t)H_{\omega}g(t)\leq
C\operatorname*{esssup}\limits_{t>0}v_{1}(t)g(t)\label{e31}%
\end{equation}
holds for some $C>0$ for all non-negative and non-decreasing functions $g$ on
$(0,\infty)$ if and only if
\begin{equation}
B:=\sup \limits_{t>0}v_{2}(t)\int \limits_{t}^{\infty}\frac{\omega
(s)ds}{\operatorname*{esssup}\limits_{s<\tau<\infty}v_{1}(\tau)}%
<\infty.\label{3*}%
\end{equation}

Moreover, the value $C=B$ is the best constant for (\ref{e31}).
\end{theorem}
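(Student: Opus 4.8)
The plan is to establish the two directions of the equivalence separately, and along the way track the constant so that the sharpness claim $C = B$ falls out. For the necessity direction ($(\ref{e31}) \Rightarrow (\ref{3*})$), I would not try an arbitrary test function but rather plug in the specific non-decreasing family that saturates the inequality. Fix $t > 0$ and consider $g_t(s) = \bigl(\operatorname*{esssup}_{s < \tau < \infty} v_1(\tau)\bigr)^{-1}$ for $s \geq$ (some cutoff) — more precisely, one takes $g(s) = 1/\operatorname*{esssup}_{s<\tau<\infty} v_1(\tau)$, which is non-negative and non-decreasing (since the essential supremum is non-increasing in $s$). Then $v_1(s) g(s) \leq 1$ for a.e. $s$, so the right-hand side of (\ref{e31}) is $\leq C$, while the left-hand side dominates $v_2(t) H_\omega g(t) = v_2(t)\int_t^\infty \frac{\omega(s)\,ds}{\operatorname*{esssup}_{s<\tau<\infty} v_1(\tau)}$ for every $t$; taking the supremum over $t$ gives $B \leq C$. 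A small technical point to handle is the case where the essential supremum is infinite or zero on some range, which I would dispose of by an approximation/truncation argument.

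For the sufficiency direction ($(\ref{3*}) \Rightarrow (\ref{e31})$ with $C \leq B$), I would argue pointwise in $t$. Let $g$ be non-negative and non-decreasing, and write $\lambda := \operatorname*{esssup}_{\tau > 0} v_1(\tau) g(\tau)$ (if $\lambda = \infty$ there is nothing to prove). Then for a.e. $s > 0$ we have $g(s) \leq \lambda / v_1(s)$, and since $g$ is non-decreasing, in fact $g(s) \leq \lambda / \operatorname*{esssup}_{s < \tau < \infty} v_1(\tau)$ — this monotonicity upgrade is the crucial structural step, because it replaces the uncontrolled $v_1(s)$ by its localized essential supremum tail. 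Substituting into $H_\omega g(t)$:
\[
v_2(t) H_\omega g(t) = v_2(t)\int_t^\infty g(s)\omega(s)\,ds \leq \lambda\, v_2(t)\int_t^\infty \frac{\omega(s)\,ds}{\operatorname*{esssup}_{s<\tau<\infty} v_1(\tau)} \leq \lambda B.
\]
Taking the essential supremum over $t > 0$ yields (\ref{e31}) with constant $B$, hence $C \leq B$.

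Combining the two inequalities $B \leq C$ and (the admissibility of) $C = B$ shows that $B$ is the best constant. The main obstacle — really the only subtle point — is justifying the monotonicity upgrade $g(s) \leq \lambda/\operatorname*{esssup}_{s<\tau<\infty} v_1(\tau)$ rigorously in the presence of "essential" suprema: one must be careful that $g(s) v_1(\tau) \leq \lambda$ for a.e. $\tau > s$ and then, since $g(s) \leq g(\tau)$ for $\tau > s$ (monotonicity) gives $g(s) v_1(\tau) \leq g(\tau) v_1(\tau) \leq \lambda$ a.e., so $g(s) \operatorname*{esssup}_{s<\tau<\infty} v_1(\tau) \leq \lambda$. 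This elementary interchange, together with Tonelli's theorem to handle the integral, is all that is needed; everything else is bookkeeping. Since this theorem is quoted from \cite{BGGS, GulJMS2013, Gurbuz, Gurbuz2, Gurbuz3}, I would either cite those sources for the detailed verification or present the short self-contained argument above.
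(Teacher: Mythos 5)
The paper does not prove this theorem at all: it is quoted verbatim from \cite{BGGS, GulJMS2013, Gurbuz, Gurbuz2, Gurbuz3}, so there is no in-paper argument to compare against. Your self-contained proof is correct and is essentially the standard one from those sources: the sufficiency direction rests exactly on the monotonicity upgrade $g(s)\operatorname*{esssup}_{s<\tau<\infty}v_{1}(\tau)\leq\operatorname*{esssup}_{\tau>0}v_{1}(\tau)g(\tau)$, which you justify properly, and the necessity direction uses the extremal non-decreasing test function $g(s)=\bigl(\operatorname*{esssup}_{s<\tau<\infty}v_{1}(\tau)\bigr)^{-1}$, for which $v_{1}(s)g(s)\leq1$ a.e.\ (this last inequality deserves the one-line justification via Lebesgue density points that you allude to). The only blemish is inherited from the statement itself: (\ref{e31}) uses $\operatorname*{esssup}_{t>0}$ while (\ref{3*}) uses $\sup_{t>0}$, so strictly speaking your necessity argument bounds the essential supremum of $v_{2}(t)\int_{t}^{\infty}\omega(s)\,ds/\operatorname*{esssup}_{s<\tau<\infty}v_{1}(\tau)$ rather than its pointwise supremum; with the intended uniform reading of the two suprema your proof is complete.
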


We first prove the following Lemma \ref{lemma2}.

\begin{lemma}
\label{lemma2} Suppose that $x_{0}\in{\mathbb{R}^{n}}$, $\Omega \in
L_{s}(S^{n-1})$, $1<s\leq \infty$, is homogeneous of degree zero. Let
$0<\alpha<n$, $1\leq p<\frac{n}{\alpha}$, $\frac{1}{q}=\frac{1}{p}%
-\frac{\alpha}{n}$. Let $T_{\Omega,\alpha}$ be a sublinear operator satisfying
condition (\ref{e1}), bounded from $L_{p}({\mathbb{R}^{n}})$ to $L_{q}%
({\mathbb{R}^{n}})$ for $p>1$, and bounded from $L_{1}({\mathbb{R}^{n}})$ to
$WL_{q}({\mathbb{R}^{n}})$ for $p=1$.

\textit{If }$p>1$\textit{\ and }$s^{\prime}\leq p$\textit{, then the
inequality}%
\begin{equation}
\left \Vert T_{\Omega,\alpha}f\right \Vert _{L_{q}\left(  B\left(
x_{0},r\right)  \right)  }\lesssim r^{\frac{n}{q}}\int \limits_{2r}^{\infty
}t^{-\frac{n}{q}-1}\left \Vert f\right \Vert _{L_{p}\left(  B\left(
x_{0},t\right)  \right)  }dt\label{100}%
\end{equation}
holds for any ball $B\left(  x_{0},r\right)  $ and for all $f\in L_{p}%
^{loc}\left(  {\mathbb{R}^{n}}\right)  $.

If $p>1$ and $q<s$, then the inequality%
\[
\left \Vert T_{\Omega,\alpha}f\right \Vert _{L_{q}\left(  B\left(
x_{0},r\right)  \right)  }\lesssim r^{\frac{n}{q}-\frac{n}{s}}\int
\limits_{2r}^{\infty}t^{\frac{n}{s}-\frac{n}{q}-1}\left \Vert f\right \Vert
_{L_{p}\left(  B\left(  x_{0},t\right)  \right)  }dt
\]
holds for any ball $B\left(  x_{0},r\right)  $ and for all $f\in L_{p}%
^{loc}\left(  {\mathbb{R}^{n}}\right)  $.

Moreover, for $p=1<q<s$ the inequality%
\begin{equation}
\left \Vert T_{\Omega,\alpha}f\right \Vert _{WL_{q}\left(  B\left(
x_{0},r\right)  \right)  }\lesssim r^{\frac{n}{q}}\int \limits_{2r}^{\infty
}t^{-\frac{n}{q}-1}\left \Vert f\right \Vert _{L_{1}\left(  B\left(
x_{0},t\right)  \right)  }dt\label{e38}%
\end{equation}
holds for any ball $B\left(  x_{0},r\right)  $ and for all $f\in L_{1}%
^{loc}\left(  {\mathbb{R}^{n}}\right)  $.
\end{lemma}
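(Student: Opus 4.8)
\noindent\textbf{Strategy.} The standard approach is to split $f$ into a local and a far part relative to the ball $B_0=B(x_0,r)$. Write $f=f_1+f_2$ with $f_1=f\chi_{2B_0}$ and $f_2=f\chi_{{}^{\complement}(2B_0)}$, where $2B_0=B(x_0,2r)$. By sublinearity, $\|T_{\Omega,\alpha}f\|_{L_q(B_0)}\le\|T_{\Omega,\alpha}f_1\|_{L_q(B_0)}+\|T_{\Omega,\alpha}f_2\|_{L_q(B_0)}$, and we estimate the two pieces separately. For the local part we simply invoke the assumed $L_p\to L_q$ boundedness of $T_{\Omega,\alpha}$ (from Lemma~\ref{Lemma1}/Corollary~\ref{Corollary0*} in the case $s^{\prime}\le p$, or the variant when $q<s$): $\|T_{\Omega,\alpha}f_1\|_{L_q(B_0)}\le\|T_{\Omega,\alpha}f_1\|_{L_q(\mathbb{R}^n)}\lesssim\|f_1\|_{L_p}=\|f\|_{L_p(2B_0)}$. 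Then one observes that $\|f\|_{L_p(2B_0)}\lesssim r^{n/q}\int_{2r}^{\infty}t^{-n/q-1}\|f\|_{L_p(B(x_0,t))}\,dt$, since for $t\ge 2r$ the inner norm dominates $\|f\|_{L_p(2B_0)}$ and $\int_{2r}^{\infty}t^{-n/q-1}dt\approx r^{-n/q}$.

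\noindent\textbf{The far part.} This is the crux. For $x\in B_0$ and $y\in{}^{\complement}(2B_0)$ we have $|x-y|\approx|x_0-y|$, so by \eqref{e1},
\[
|T_{\Omega,\alpha}f_2(x)|\lesssim\int_{{}^{\complement}(2B_0)}\frac{|\Omega(x-y)|}{|x_0-y|^{n-\alpha}}|f(y)|\,dy.
\]
Decompose the region dyadically: ${}^{\complement}(2B_0)=\bigcup_{j\ge1}\bigl(B(x_0,2^{j+1}r)\setminus B(x_0,2^{j}r)\bigr)$, so the integral is bounded by $\sum_{j\ge1}(2^{j}r)^{\alpha-n}\int_{B(x_0,2^{j+1}r)}|\Omega(x-y)||f(y)|\,dy$. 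On each annulus apply Hölder's inequality with exponents chosen according to the integrability of $\Omega$: in the case $s^{\prime}\le p$ one uses Hölder with exponents $s$, and then $p/s^{\prime}\ge1$ again, to get $\int_{B(x_0,2^{j+1}r)}|\Omega(x-y)||f(y)|\,dy\lesssim\|\Omega\|_{L_s(S^{n-1})}(2^jr)^{n/s^{\prime}}\|f\|_{L_{p}(B(x_0,2^{j+1}r))}(2^jr)^{n(1-1/s^{\prime}-1/p)}$ — carefully tracking the homogeneity-zero scaling of $\Omega$, which contributes $(2^jr)^{n/s}$ from the $L_s$-norm over the dilated annulus. In the case $q<s$ one instead pairs the annular $L_q$-norm of $T_{\Omega,\alpha}f_2$ directly with an $L_s$ bound on $\Omega$, producing the extra $r^{n/q-n/s}$ and $t^{n/s-n/q-1}$ factors. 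After collecting exponents the $x$-dependence drops out, one takes $L_q(B_0)$-norms (just multiplying by $|B_0|^{1/q}\approx r^{n/q}$), and then converts the dyadic sum $\sum_j (2^jr)^{-n/q}\|f\|_{L_p(B(x_0,2^{j+1}r))}$ back into the integral $\int_{2r}^{\infty}t^{-n/q-1}\|f\|_{L_p(B(x_0,t))}\,dt$ by comparing each term with the integral over $[2^jr,2^{j+1}r]$ and using monotonicity of $t\mapsto\|f\|_{L_p(B(x_0,t))}$.

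\noindent\textbf{The weak case $p=1<q<s$.} Here the local term uses the assumed $L_1\to WL_q$ boundedness: $\|T_{\Omega,\alpha}f_1\|_{WL_q(B_0)}\le\|T_{\Omega,\alpha}f_1\|_{WL_q(\mathbb{R}^n)}\lesssim\|f\|_{L_1(2B_0)}\lesssim r^{n/q}\int_{2r}^{\infty}t^{-n/q-1}\|f\|_{L_1(B(x_0,t))}\,dt$. For the far term, since $T_{\Omega,\alpha}f_2$ is pointwise dominated (as above) by a quantity independent of the weak/strong distinction, its $WL_q(B_0)$-norm is bounded by its $L_q(B_0)$-norm, and the same dyadic argument as before (now with $p=1$, so no Hölder in $f$ is needed, only in $\Omega$ against the indicator of the annulus) gives the bound. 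Combining the two pieces yields \eqref{e38}.

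\noindent\textbf{Main obstacle.} The bookkeeping in the far part is the only real difficulty: one must correctly account for the homogeneity of degree zero of $\Omega$ when estimating $\int_{\text{annulus}}|\Omega(x-y)|^s\,dy$ — this integral scales like $(2^jr)^n$ with constant $\|\Omega\|_{L_s(S^{n-1})}^s$ via polar coordinates — and then verify that the two Hölder steps in the case $s^{\prime}\le p$ (or the single step in the case $q<s$) produce exactly the exponents $-n/q-1$ (resp. $n/s-n/q-1$) appearing in \eqref{100}. Everything else — the geometric observation $|x-y|\approx|x_0-y|$, the passage from the dyadic sum to the integral, and the reduction of weak norms to strong norms on the far part — is routine.
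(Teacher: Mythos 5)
Your proposal is correct and follows essentially the same route as the paper: the same splitting $f=f_1+f_2$ at $2B(x_0,r)$, the assumed $L_p\to L_q$ (resp.\ $L_1\to WL_q$) boundedness plus the reverse inequality $\Vert f\Vert_{L_p(2B)}\lesssim r^{n/q}\int_{2r}^{\infty}t^{-n/q-1}\Vert f\Vert_{L_p(B(x_0,t))}\,dt$ for the local part, and H\"older together with the polar-coordinate bound $\Vert\Omega(x-\cdot)\Vert_{L_s(B(x_0,t))}\lesssim\Vert\Omega\Vert_{L_s(S^{n-1})}t^{n/s}$ (and Minkowski in the case $q<s$) for the far part. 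The only cosmetic difference is that you pass from the kernel decay to the integral in $t$ via a dyadic annular sum and then back, whereas the paper writes $|x_0-y|^{\alpha-n}\approx\int_{|x_0-y|}^{\infty}t^{\alpha-n-1}\,dt$ and applies Fubini directly; these are interchangeable.
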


\begin{proof}
Let $0<\alpha<n$, $1\leq s^{\prime}<p<\frac{n}{\alpha}$ and $\frac{1}{q}%
=\frac{1}{p}-\frac{\alpha}{n}$. Set $B=B\left(  x_{0},r\right)  $ for the ball
centered at $x_{0}$ and of radius $r$ and $2B=B\left(  x_{0},2r\right)  $. We
represent $f$ as%
\begin{equation}
f=f_{1}+f_{2},\qquad \text{\ }f_{1}\left(  y\right)  =f\left(  y\right)
\chi_{2B}\left(  y\right)  ,\qquad \text{\ }f_{2}\left(  y\right)  =f\left(
y\right)  \chi_{\left(  2B\right)  ^{C}}\left(  y\right)  ,\qquad
r>0\label{e39}%
\end{equation}
and have%
\[
\left \Vert T_{\Omega,\alpha}f\right \Vert _{L_{q}\left(  B\right)  }%
\leq \left \Vert T_{\Omega,\alpha}f_{1}\right \Vert _{L_{q}\left(  B\right)
}+\left \Vert T_{\Omega,\alpha}f_{2}\right \Vert _{L_{q}\left(  B\right)  }.
\]

Since $f_{1}\in L_{p}\left(  \mathbb{R}^{n}\right)  $, $T_{\Omega,\alpha}%
f_{1}\in L_{q}\left(  \mathbb{R}^{n}\right)  $ and from the boundedness of
$T_{\Omega,\alpha}$ from $L_{p}({\mathbb{R}^{n}})$ to $L_{q}({\mathbb{R}^{n}%
})$ (see Lemma \ref{Lemma1}) it follows that:%
\[
\left \Vert T_{\Omega,\alpha}f_{1}\right \Vert _{L_{q}\left(  B\right)  }%
\leq \left \Vert T_{\Omega,\alpha}f_{1}\right \Vert _{L_{q}\left(
\mathbb{R}
^{n}\right)  }\leq C\left \Vert f_{1}\right \Vert _{L_{p}\left(
\mathbb{R}
^{n}\right)  }=C\left \Vert f\right \Vert _{L_{p}\left(  2B\right)  },
\]
where constant $C>0$ is independent of $f$.

It is clear that $x\in B$, $y\in \left(  2B\right)  ^{C}$ implies \ $\frac
{1}{2}\left \vert x_{0}-y\right \vert \leq \left \vert x-y\right \vert \leq \frac
{3}{2}\left \vert x_{0}-y\right \vert $. We get%
\[
\left \vert T_{\Omega,\alpha}f_{2}\left(  x\right)  \right \vert \leq
2^{n-\alpha}c_{1}\int \limits_{\left(  2B\right)  ^{C}}\frac{\left \vert
f\left(  y\right)  \right \vert \left \vert \Omega \left(  x-y\right)
\right \vert }{\left \vert x_{0}-y\right \vert ^{n-\alpha}}dy.
\]

By the Fubini's theorem, we have%
\begin{align*}
\int \limits_{\left(  2B\right)  ^{C}}\frac{\left \vert f\left(  y\right)
\right \vert \left \vert \Omega \left(  x-y\right)  \right \vert }{\left \vert
x_{0}-y\right \vert ^{n-\alpha}}dy  & \approx \int \limits_{\left(  2B\right)
^{C}}\left \vert f\left(  y\right)  \right \vert \left \vert \Omega \left(
x-y\right)  \right \vert \int \limits_{\left \vert x_{0}-y\right \vert }^{\infty
}\frac{dt}{t^{n+1-\alpha}}dy\\
& \approx \int \limits_{2r}^{\infty}\int \limits_{2r\leq \left \vert x_{0}%
-y\right \vert \leq t}\left \vert f\left(  y\right)  \right \vert \left \vert
\Omega \left(  x-y\right)  \right \vert dy\frac{dt}{t^{n+1-\alpha}}\\
& \lesssim \int \limits_{2r}^{\infty}\int \limits_{B\left(  x_{0},t\right)
}\left \vert f\left(  y\right)  \right \vert \left \vert \Omega \left(
x-y\right)  \right \vert dy\frac{dt}{t^{n+1-\alpha}}.
\end{align*}

Applying the H\"{o}lder's inequality, we get%
\begin{align}
& \int \limits_{\left(  2B\right)  ^{C}}\frac{\left \vert f\left(  y\right)
\right \vert \left \vert \Omega \left(  x-y\right)  \right \vert }{\left \vert
x_{0}-y\right \vert ^{n-\alpha}}dy\nonumber \\
& \lesssim \int \limits_{2r}^{\infty}\left \Vert f\right \Vert _{L_{p}\left(
B\left(  x_{0},t\right)  \right)  }\left \Vert \Omega \left(  x-\cdot \right)
\right \Vert _{L_{s}\left(  B\left(  x_{0},t\right)  \right)  }\left \vert
B\left(  x_{0},t\right)  \right \vert ^{1-\frac{1}{p}-\frac{1}{s}}\frac
{dt}{t^{n+1-\alpha}}.\label{e310}%
\end{align}

For $x\in B\left(  x_{0},t\right)  $, notice that $\Omega$ is homogenous of
degree zero and $\Omega \in L_{s}(S^{n-1})$, $s>1$. Then, we obtain%
\begin{align}
\left(  \int \limits_{B\left(  x_{0},t\right)  }\left \vert \Omega \left(
x-y\right)  \right \vert ^{s}dy\right)  ^{\frac{1}{s}}  & =\left(
\int \limits_{B\left(  x-x_{0},t\right)  }\left \vert \Omega \left(  z\right)
\right \vert ^{s}dz\right)  ^{\frac{1}{s}}\nonumber \\
& \leq \left(  \int \limits_{B\left(  0,t+\left \vert x-x_{0}\right \vert \right)
}\left \vert \Omega \left(  z\right)  \right \vert ^{s}dz\right)  ^{\frac{1}{s}%
}\nonumber \\
& \leq \left(  \int \limits_{B\left(  0,2t\right)  }\left \vert \Omega \left(
z\right)  \right \vert ^{s}dz\right)  ^{\frac{1}{s}}\nonumber \\
& =\left(  \int \limits_{S^{n-1}}\int \limits_{0}^{2t}\left \vert \Omega \left(
z^{\prime}\right)  \right \vert ^{s}d\sigma \left(  z^{\prime}\right)
r^{n-1}dr\right)  ^{\frac{1}{s}}\nonumber \\
& =C\left \Vert \Omega \right \Vert _{L_{s}\left(  S^{n-1}\right)  }\left \vert
B\left(  x_{0},2t\right)  \right \vert ^{\frac{1}{s}}.\label{e311}%
\end{align}

Thus, by (\ref{e311}), it follows that:%
\[
\left \vert T_{\Omega,\alpha}f_{2}\left(  x\right)  \right \vert \lesssim
\int \limits_{2r}^{\infty}\left \Vert f\right \Vert _{L_{p}\left(  B\left(
x_{0},t\right)  \right)  }\frac{dt}{t^{\frac{n}{q}+1}}.
\]

Moreover, for all $p\in \left[  1,\infty \right)  $ the inequality%
\begin{equation}
\left \Vert T_{\Omega,\alpha}f_{2}\right \Vert _{L_{q}\left(  B\right)
}\lesssim r^{\frac{n}{q}}\int \limits_{2r}^{\infty}\left \Vert f\right \Vert
_{L_{p}\left(  B\left(  x_{0},t\right)  \right)  }\frac{dt}{t^{\frac{n}{q}+1}%
}\label{e312}%
\end{equation}

is valid. Thus, we obtain%
\[
\left \Vert T_{\Omega,\alpha}f\right \Vert _{L_{q}\left(  B\right)  }%
\lesssim \left \Vert f\right \Vert _{L_{p}\left(  2B\right)  }+r^{\frac{n}{q}%
}\int \limits_{2r}^{\infty}\left \Vert f\right \Vert _{L_{p}\left(  B\left(
x_{0},t\right)  \right)  }\frac{dt}{t^{\frac{n}{q}+1}}.
\]

On the other hand, we have%
\begin{align}
\left \Vert f\right \Vert _{L_{p}\left(  2B\right)  }  & \approx r^{\frac{n}{q}%
}\left \Vert f\right \Vert _{L_{p}\left(  2B\right)  }\int \limits_{2r}^{\infty
}\frac{dt}{t^{\frac{n}{q}+1}}\nonumber \\
& \leq r^{\frac{n}{q}}\int \limits_{2r}^{\infty}\left \Vert f\right \Vert
_{L_{p}\left(  B\left(  x_{0},t\right)  \right)  }\frac{dt}{t^{\frac{n}{q}+1}%
}.\label{e313}%
\end{align}

By combining the above inequalities, we obtain%
\[
\left \Vert T_{\Omega,\alpha}f\right \Vert _{L_{q}\left(  B\right)  }\lesssim
r^{\frac{n}{q}}\int \limits_{2r}^{\infty}\left \Vert f\right \Vert _{L_{p}\left(
B\left(  x_{0},t\right)  \right)  }\frac{dt}{t^{\frac{n}{q}+1}}.
\]

Let $1<q<s$. Similarly to (\ref{e311}), when $y\in B\left(  x_{0},t\right)  $,
it is true that%
\begin{equation}
\left(  \int \limits_{B\left(  x_{0},r\right)  }\left \vert \Omega \left(
x-y\right)  \right \vert ^{s}dy\right)  ^{\frac{1}{s}}\leq C\left \Vert
\Omega \right \Vert _{L_{s}\left(  S^{n-1}\right)  }\left \vert B\left(
x_{0},\frac{3}{2}t\right)  \right \vert ^{\frac{1}{s}}.\label{314}%
\end{equation}

By the Fubini's theorem, the Minkowski inequality and (\ref{314}) , we get%
\begin{align*}
\left \Vert T_{\Omega,\alpha}f_{2}\right \Vert _{L_{q}\left(  B\right)  }  &
\leq \left(  \int \limits_{B}\left \vert \int \limits_{2r}^{\infty}\int
\limits_{B\left(  x_{0},t\right)  }\left \vert f\left(  y\right)  \right \vert
\left \vert \Omega \left(  x-y\right)  \right \vert dy\frac{dt}{t^{n+1-\alpha}%
}\right \vert ^{q}dx\right)  ^{\frac{1}{q}}\\
& \leq \int \limits_{2r}^{\infty}\int \limits_{B\left(  x_{0},t\right)
}\left \vert f\left(  y\right)  \right \vert \left \Vert \Omega \left(
\cdot-y\right)  \right \Vert _{L_{q}\left(  B\right)  }dy\frac{dt}%
{t^{n+1-\alpha}}\\
& \leq \left \vert B\left(  x_{0},r\right)  \right \vert ^{\frac{1}{q}-\frac
{1}{s}}\int \limits_{2r}^{\infty}\int \limits_{B\left(  x_{0},t\right)
}\left \vert f\left(  y\right)  \right \vert \left \Vert \Omega \left(
\cdot-y\right)  \right \Vert _{L_{s}\left(  B\right)  }dy\frac{dt}%
{t^{n+1-\alpha}}\\
& \lesssim r^{\frac{n}{q}-\frac{n}{s}}\int \limits_{2r}^{\infty}\left \Vert
f\right \Vert _{L_{1}\left(  B\left(  x_{0},t\right)  \right)  }\left \vert
B\left(  x_{0},\frac{3}{2}t\right)  \right \vert ^{\frac{1}{s}}\frac
{dt}{t^{n+1-\alpha}}\\
& \lesssim r^{\frac{n}{q}-\frac{n}{s}}\int \limits_{2r}^{\infty}\left \Vert
f\right \Vert _{L_{p}\left(  B\left(  x_{0},t\right)  \right)  }t^{\frac{n}%
{s}-\frac{n}{q}-1}dt.
\end{align*}

Let $p=1<q<s\leq \infty$. From the weak $\left(  1,q\right)  $ boundedness of
$T_{\Omega,\alpha}$ and (\ref{e313}) it follows that:%
\begin{align}
\left \Vert T_{\Omega,\alpha}f_{1}\right \Vert _{WL_{q}\left(  B\right)  }  &
\leq \left \Vert T_{\Omega,\alpha}f_{1}\right \Vert _{WL_{q}\left(
\mathbb{R}
^{n}\right)  }\lesssim \left \Vert f_{1}\right \Vert _{L_{1}\left(
\mathbb{R}
^{n}\right)  }\nonumber \\
& =\left \Vert f\right \Vert _{L_{1}\left(  2B\right)  }\lesssim r^{\frac{n}{q}%
}\int \limits_{2r}^{\infty}\left \Vert f\right \Vert _{L_{1}\left(  B\left(
x_{0},t\right)  \right)  }\frac{dt}{t^{\frac{n}{q}+1}}.\label{315}%
\end{align}

Then from (\ref{e312}) and (\ref{315}) we get the inequality (\ref{e38}),
which completes the proof.
\end{proof}

In the following theorem (our main result), we get the boundedness of the
operator $T_{\Omega,\alpha}$ on the generalized local Morrey spaces
$LM_{p,\varphi}^{\left \{  x_{0}\right \}  }$.

\begin{theorem}
\label{teo9}Suppose that $x_{0}\in{\mathbb{R}^{n}}$, $\Omega \in L_{s}%
(S^{n-1})$, $1<s\leq \infty$, is homogeneous of degree zero. Let $0<\alpha<n$,
$1\leq p<\frac{n}{\alpha}$, $\frac{1}{q}=\frac{1}{p}-\frac{\alpha}{n}$. Let
$T_{\Omega,\alpha}$ be a sublinear operator satisfying condition (\ref{e1}),
bounded from $L_{p}({\mathbb{R}^{n}})$ to $L_{q}({\mathbb{R}^{n}})$ for $p>1$,
and bounded from $L_{1}({\mathbb{R}^{n}})$ to $WL_{q}({\mathbb{R}^{n}})$ for
$p=1$. Let also, for $s^{\prime}\leq p$, $p\neq1$, the pair $(\varphi
_{1},\varphi_{2})$ satisfies the condition%
\begin{equation}
\int \limits_{r}^{\infty}\frac{\operatorname*{essinf}\limits_{t<\tau<\infty
}\varphi_{1}(x_{0},\tau)\tau^{\frac{n}{p}}}{t^{\frac{n}{q}+1}}dt\leq
C\varphi_{2}(x_{0},r),\label{316}%
\end{equation}
and for $q<s$ the pair $(\varphi_{1},\varphi_{2})$ satisfies the condition%
\begin{equation}
\int \limits_{r}^{\infty}\frac{\operatorname*{essinf}\limits_{t<\tau<\infty
}\varphi_{1}(x_{0},\tau)\tau^{\frac{n}{p}}}{t^{\frac{n}{q}-\frac{n}{s}+1}%
}dt\leq C\, \varphi_{2}(x_{0},r)r^{\frac{n}{s}},\label{317}%
\end{equation}
where $C$ does not depend on $r$.

Then the operator $T_{\Omega,\alpha}$ is bounded from $LM_{p,\varphi_{1}%
}^{\{x_{0}\}}$ to $LM_{q,\varphi_{2}}^{\{x_{0}\}}$ for $p>1$ and from
$LM_{1,\varphi_{1}}^{\{x_{0}\}}$ to $WLM_{q,\varphi_{2}}^{\{x_{0}\}}$ for
$p=1$. Moreover, we have for $p>1$%
\begin{equation}
\left \Vert T_{\Omega,\alpha}f\right \Vert _{LM_{q,\varphi_{2}}^{\{x_{0}\}}%
}\lesssim \left \Vert f\right \Vert _{LM_{p,\varphi_{1}}^{\{x_{0}\}}},\label{3-1}%
\end{equation}
and for $p=1$%
\begin{equation}
\left \Vert T_{\Omega,\alpha}f\right \Vert _{WLM_{q,\varphi_{2}}^{\{x_{0}\}}%
}\lesssim \left \Vert f\right \Vert _{LM_{1,\varphi_{1}}^{\{x_{0}\}}}.\label{3-2}%
\end{equation}

\end{theorem}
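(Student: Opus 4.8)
The plan is to combine the pointwise Morrey–local estimates of Lemma \ref{lemma2} with the Hardy-operator characterization in Theorem \ref{teo5}. First I would fix $x_{0}\in{\mathbb{R}^{n}}$ and take $f\in LM_{p,\varphi_{1}}^{\{x_{0}\}}$ (for $p>1$; the case $p=1$ is analogous with weak norms). For a ball $B(x_{0},r)$, Lemma \ref{lemma2} gives, in the case $s^{\prime}\le p$, $p\neq1$,
\[
\left\Vert T_{\Omega,\alpha}f\right\Vert_{L_{q}(B(x_{0},r))}\lesssim r^{\frac{n}{q}}\int\limits_{2r}^{\infty}t^{-\frac{n}{q}-1}\left\Vert f\right\Vert_{L_{p}(B(x_{0},t))}\,dt,
\]
and a similar estimate with the extra factor $t^{\frac{n}{s}}$, $r^{-\frac{n}{s}}$ in the case $q<s$. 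I would then bound $\left\Vert f\right\Vert_{L_{p}(B(x_{0},t))}$ from above using the definition of the $LM_{p,\varphi_{1}}^{\{x_{0}\}}$-norm: $\left\Vert f\right\Vert_{L_{p}(B(x_{0},t))}\le \left\Vert f\right\Vert_{LM_{p,\varphi_{1}}^{\{x_{0}\}}}\,\varphi_{1}(x_{0},t)\,t^{\frac{n}{p}}$ (up to the constant $v_{n}^{1/p}$ from $|B(x_{0},t)|^{1/p}$).

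Next I would divide both sides of the resulting estimate by $\varphi_{2}(x_{0},r)|B(x_{0},r)|^{1/q}\approx\varphi_{2}(x_{0},r)r^{n/q}$, so that the left side becomes $\mathfrak{M}_{q,\varphi_{2}}(T_{\Omega,\alpha}f;x_{0},r)$ and taking $\sup_{r>0}$ produces $\left\Vert T_{\Omega,\alpha}f\right\Vert_{LM_{q,\varphi_{2}}^{\{x_{0}\}}}$. After this normalization the right-hand side has the form
\[
\frac{1}{\varphi_{2}(x_{0},r)}\int\limits_{2r}^{\infty}t^{-\frac{n}{q}-1}\varphi_{1}(x_{0},t)\,t^{\frac{n}{p}}\,dt\cdot\left\Vert f\right\Vert_{LM_{p,\varphi_{1}}^{\{x_{0}\}}},
\]
i.e. an essential-supremum-in-$r$ of a weighted Hardy operator $H_{\omega}$ applied to the non-decreasing function $g(t)=\left\Vert f\right\Vert_{L_{p}(B(x_{0},t))}$, with $\omega(t)=t^{-\frac{n}{q}-1}$ and weights $v_{2}(r)=\varphi_{2}(x_{0},r)^{-1}r^{-n/q}$ (up to constants) and $v_{1}(t)=\varphi_{1}(x_{0},t)^{-1}t^{-n/p}$. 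Theorem \ref{teo5} then reduces the desired boundedness to the finiteness of the constant $B$ in \eqref{3*}, which after substituting these weights is precisely hypothesis \eqref{316} (the $\operatorname{essinf}$ in \eqref{316} matching $1/\operatorname{esssup}$ of $v_{1}$ in \eqref{3*}). For the range $q<s$ the same computation with the second estimate of Lemma \ref{lemma2} produces the weight $\omega(t)=t^{\frac{n}{s}-\frac{n}{q}-1}$ and an extra $r^{n/s}$, so finiteness of $B$ becomes exactly \eqref{317}. The case $p=1<q<s$ is identical, using \eqref{e38} and the weak-norm version $\mathfrak{M}_{q,\varphi_{2}}^{W}$, yielding \eqref{3-2}.

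The only genuinely delicate point is the bookkeeping in passing from Lemma \ref{lemma2} to the exact form of Theorem \ref{teo5}: one must check that $g(t)=\left\Vert f\right\Vert_{L_{p}(B(x_{0},t))}$ is non-negative and non-decreasing in $t$ (clear, since the balls are nested), verify convergence of the integral $\int_{2r}^{\infty}$ under the hypotheses (this follows from \eqref{316}–\eqref{317} together with the standing assumptions \eqref{2}–\eqref{3} on $\varphi$), and confirm that the integration lower limit $2r$ versus $r$ and the constants $v_{n}^{1/p}$, $v_{n}^{1/q}$ only affect the value of the constant $C$, not the validity of the estimate — since $\int_{2r}^{\infty}\le\int_{r}^{\infty}$ and the weights are monotone in the relevant way. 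Once this identification is in place the proof is complete.
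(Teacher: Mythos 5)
Your proposal follows the paper's own proof essentially verbatim: apply Lemma \ref{lemma2}, then Theorem \ref{teo5} with $g(t)=\Vert f\Vert_{L_{p}(B(x_{0},t))}$, $\omega(t)=t^{-\frac{n}{q}-1}$ (resp.\ $t^{\frac{n}{s}-\frac{n}{q}-1}$), $v_{1}(t)=\varphi_{1}(x_{0},t)^{-1}t^{-\frac{n}{p}}$ and $v_{2}(r)=\varphi_{2}(x_{0},r)^{-1}$, so that condition (\ref{3*}) becomes exactly (\ref{316}) (resp.\ (\ref{317})), with the weak-norm variant giving (\ref{3-2}). The only blemish is the stray factor $r^{-n/q}$ in your stated choice of $v_{2}$: after dividing by $\varphi_{2}(x_{0},r)\,|B(x_{0},r)|^{1/q}$ the factor $r^{n/q}$ from Lemma \ref{lemma2} has already cancelled, so $v_{2}(r)=\varphi_{2}(x_{0},r)^{-1}$, as your own displayed formula correctly shows.
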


\begin{proof}
Let $1<p<\infty$ and $s^{\prime}\leq p$. By Lemma \ref{lemma2} and Theorem
\ref{teo5} with $v_{2}\left(  r\right)  =\varphi_{2}\left(  x_{0},r\right)
^{-1}$, $v_{1}=\varphi_{1}\left(  x_{0},r\right)  ^{-1}r^{-\frac{n}{p}}$,
$w\left(  r\right)  =r^{-\frac{n}{q}-1}$ and $g\left(  r\right)  =\left \Vert
f\right \Vert _{L_{p}\left(  B\left(  x_{0},r\right)  \right)  }$, we have%
\begin{align*}
\left \Vert T_{\Omega,\alpha}f\right \Vert _{LM_{q,\varphi_{2}}^{\{x_{0}\}}}  &
\lesssim \sup_{r>0}\varphi_{2}\left(  x_{0},r\right)  ^{-1}\int \limits_{r}%
^{\infty}\left \Vert f\right \Vert _{L_{p}\left(  B\left(  x_{0},t\right)
\right)  }\frac{dt}{t^{\frac{n}{q}+1}}\\
& \lesssim \sup_{r>0}\varphi_{1}\left(  x_{0},r\right)  ^{-1}r^{-\frac{n}{p}%
}\left \Vert f\right \Vert _{L_{p}\left(  B\left(  x_{0},r\right)  \right)
}=\left \Vert f\right \Vert _{LM_{p,\varphi_{1}}^{\{x_{0}\}}},
\end{align*}
where condition (\ref{3*}) is equivalent to (\ref{316}), then we obtain
(\ref{3-1}).

Let $1\leq q<s$. By Lemma \ref{lemma2} and Theorem \ref{teo5} with
$v_{2}\left(  r\right)  =\varphi_{2}\left(  x_{0},r\right)  ^{-1}$,
$v_{1}=\varphi_{1}\left(  x_{0},r\right)  ^{-1}r^{-\frac{n}{p}+\frac{n}{s}}$,
$w\left(  r\right)  =r^{-\frac{n}{q}+\frac{n}{s}-1}$ and $g\left(  r\right)
=\left \Vert f\right \Vert _{L_{p}\left(  B\left(  x_{0},r\right)  \right)  }$,
we have%
\begin{align*}
\left \Vert T_{\Omega,\alpha}f\right \Vert _{LM_{q,\varphi_{2}}^{\{x_{0}\}}}  &
\lesssim \sup_{r>0}\varphi_{2}\left(  x_{0},r\right)  ^{-1}r^{-\frac{n}{s}}%
\int \limits_{r}^{\infty}\left \Vert f\right \Vert _{L_{p}\left(  B\left(
x_{0},t\right)  \right)  }\frac{dt}{t^{\frac{n}{q}-\frac{n}{s}+1}}\\
& \lesssim \sup_{r>0}\varphi_{1}\left(  x_{0},r\right)  ^{-1}r^{-\frac{n}{p}%
}\left \Vert f\right \Vert _{L_{p}\left(  B\left(  x_{0},r\right)  \right)
}=\left \Vert f\right \Vert _{LM_{p,\varphi_{1}}^{\{x_{0}\}}},
\end{align*}
where condition (\ref{3*}) is equivalent to (\ref{317}). Thus, we obtain
(\ref{3-1}).

Also, for $p=1$ we have%
\begin{align*}
\left \Vert T_{\Omega,\alpha}f\right \Vert _{WLM_{q,\varphi_{2}}^{\{x_{0}\}}}  &
\lesssim \sup_{r>0}\varphi_{2}\left(  x_{0},r\right)  ^{-1}\int \limits_{r}%
^{\infty}\left \Vert f\right \Vert _{L_{1}\left(  B\left(  x_{0},t\right)
\right)  }\frac{dt}{t^{\frac{n}{q}+1}}\\
& \lesssim \sup_{r>0}\varphi_{1}\left(  x_{0},r\right)  ^{-1}r^{-n}\left \Vert
f\right \Vert _{L_{1}\left(  B\left(  x_{0},r\right)  \right)  }=\left \Vert
f\right \Vert _{LM_{1,\varphi_{1}}^{\{x_{0}\}}}.
\end{align*}
Hence, the proof is completed.
\end{proof}

In the case of $q=\infty$ by Theorem \ref{teo9}, we get

\begin{corollary}
\label{corollary 3}Let $x_{0}\in{\mathbb{R}^{n}}$, $1\leq p<\infty$,
$0<\alpha<\frac{n}{p}$, $\frac{1}{q}=\frac{1}{p}-\frac{\alpha}{n}$ and the
pair $(\varphi_{1},\varphi_{2})$ satisfies condition (\ref{316}). Then the
operators $M_{\alpha}$ and $\overline{T}_{\alpha}$ are bounded from
$LM_{p,\varphi_{1}}^{\{x_{0}\}}$ to $LM_{q,\varphi_{2}}^{\{x_{0}\}}$ for $p>1$
and from $LM_{1,\varphi_{1}}^{\{x_{0}\}}$ to $WLM_{q,\varphi_{2}}^{\{x_{0}\}}$
for $p=1$.
\end{corollary}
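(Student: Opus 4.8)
The plan is to obtain the statement simply by specializing Theorem \ref{teo9} to the trivial kernel $\Omega\equiv1$. First I would note that $\overline{T}_{\alpha}=\overline{T}_{1,\alpha}$ satisfies the size condition (\ref{e1}) with $\Omega\equiv1$ (indeed as an equality), and that $M_{\alpha}$ likewise satisfies (\ref{e1}) with $\Omega\equiv1$ in view of the pointwise estimate $M_{\alpha}f(x)\le\nu_{n}^{\frac{\alpha}{n}-1}\overline{T}_{\alpha}(|f|)(x)$ recalled in the Introduction; hence both $M_{\alpha}$ and $\overline{T}_{\alpha}$ are operators of the form $T_{\Omega,\alpha}$ with $\Omega\equiv1$.

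Next I would check the remaining hypotheses of Theorem \ref{teo9}. Since $\Omega\equiv1$ is homogeneous of degree zero and belongs to $L_{s}(S^{n-1})$ for every $1<s\le\infty$, I would take $s=\infty$; then $s'=1\le p$ for all $p\in[1,\infty)$, so the requirement $s'\le p$ is automatic, and moreover $q<s=\infty$ is automatic. Furthermore, by the Hardy--Littlewood--Sobolev theorem---equivalently, by Lemma \ref{Lemma1} and Corollary \ref{Corollary0*} with $\Omega\equiv1$---both $\overline{T}_{\alpha}$ and $M_{\alpha}$ are bounded from $L_{p}({\mathbb{R}^{n}})$ to $L_{q}({\mathbb{R}^{n}})$ for $1<p<\frac{n}{\alpha}$ with $\frac{1}{q}=\frac{1}{p}-\frac{\alpha}{n}$, and from $L_{1}({\mathbb{R}^{n}})$ to $WL_{q}({\mathbb{R}^{n}})$ for $p=1$, so the boundedness assumptions of Theorem \ref{teo9} are met.

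It then remains only to observe that when $s=\infty$ the two conditions (\ref{316}) and (\ref{317}) coincide: setting $\frac{n}{s}=0$, the exponent $\frac{n}{q}-\frac{n}{s}+1$ appearing in (\ref{317}) becomes $\frac{n}{q}+1$ and the factor $r^{\frac{n}{s}}$ on its right-hand side becomes $1$, so (\ref{317}) reduces exactly to (\ref{316}). Consequently the single hypothesis (\ref{316}) on the pair $(\varphi_{1},\varphi_{2})$ is precisely what Theorem \ref{teo9} requires in this situation, and applying that theorem yields the boundedness of $\overline{T}_{\alpha}$---hence of $M_{\alpha}$---from $LM_{p,\varphi_{1}}^{\{x_{0}\}}$ to $LM_{q,\varphi_{2}}^{\{x_{0}\}}$ for $p>1$ and from $LM_{1,\varphi_{1}}^{\{x_{0}\}}$ to $WLM_{q,\varphi_{2}}^{\{x_{0}\}}$ for $p=1$. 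There is essentially no obstacle here: the only points needing a line of verification are that $M_{\alpha}$ genuinely satisfies (\ref{e1}) with trivial kernel and that the Zygmund-type conditions (\ref{316}) and (\ref{317}) collapse to a single one in the limiting case $s=\infty$.
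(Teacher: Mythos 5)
Your proposal is correct and is exactly the route the paper takes: Corollary \ref{corollary 3} is stated as the specialization of Theorem \ref{teo9} to $\Omega\equiv1$ (the paper's phrase ``in the case of $q=\infty$'' is evidently a typo for $s=\infty$), and you have correctly filled in the details -- that $M_{\alpha}$ and $\overline{T}_{\alpha}$ satisfy (\ref{e1}) with trivial kernel, that $s'=1\leq p$ and $q<s=\infty$ hold automatically, and that (\ref{317}) collapses to (\ref{316}) when $\tfrac{n}{s}=0$. No gaps.
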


\begin{corollary}
Suppose that $x_{0}\in{\mathbb{R}^{n}}$, $\Omega \in L_{s}(S^{n-1})$,
$1<s\leq \infty$, is homogeneous of degree zero. Let $0<\alpha<n$, $1\leq
p<\frac{n}{\alpha}$ and $\frac{1}{q}=\frac{1}{p}-\frac{\alpha}{n}$. Let also
for $s^{\prime}\leq p$ the pair $\left(  \varphi_{1},\varphi_{2}\right)  $
satisfies condition (\ref{316}) and for $q<s$ the pair $\left(  \varphi
_{1},\varphi_{2}\right)  $ satisfies condition (\ref{317}). Then the operators
$M_{\Omega,\alpha}$ and $\overline{T}_{\Omega,\alpha}$ are bounded from
$LM_{p,\varphi_{1}}^{\{x_{0}\}}$ to $LM_{q,\varphi_{2}}^{\{x_{0}\}}$ for $p>1$
and from $LM_{1,\varphi_{1}}^{\{x_{0}\}}$ to $WLM_{q,\varphi_{2}}^{\{x_{0}\}}$
for $p=1$.
\end{corollary}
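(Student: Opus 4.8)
The plan is to deduce this corollary directly from Theorem~\ref{teo9} by verifying that the rough fractional integral operator $\overline{T}_{\Omega,\alpha}$ and the rough fractional maximal operator $M_{\Omega,\alpha}$ both fall under the hypotheses of that theorem. First I would observe that $\overline{T}_{\Omega,\alpha}$ satisfies the size condition (\ref{e1}) with $c_{0}=1$, since by definition
\[
\left|\overline{T}_{\Omega,\alpha}f(x)\right|\leq \int\limits_{{\mathbb{R}^{n}}}\frac{|\Omega(x-y)|}{|x-y|^{n-\alpha}}\,|f(y)|\,dy,
\]
and that it is a linear (hence sublinear) operator. Likewise, from the estimate proved in Corollary~\ref{Corollary0*}, namely $M_{\Omega,\alpha}f(x)\leq C_{n,\alpha}^{-1}\widetilde{T}_{|\Omega|,\alpha}(|f|)(x)$, the operator $M_{\Omega,\alpha}$ also obeys (\ref{e1}); and $M_{\Omega,\alpha}$ is sublinear by the usual subadditivity of a supremum of averages. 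Thus both operators satisfy condition (\ref{e1}).

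Next I would verify the required $L_{p}\to L_{q}$ mapping properties. Under the stated assumptions $0<\alpha<n$, $1\leq p<\frac{n}{\alpha}$, $\frac{1}{q}=\frac{1}{p}-\frac{\alpha}{n}$, together with $\Omega\in L_{s}(S^{n-1})$ for the relevant range of $s$, Lemma~\ref{Lemma1} gives $\|\overline{T}_{\Omega,\alpha}f\|_{L_{q}}\lesssim\|f\|_{L_{p}}$ for $p>1$, and Corollary~\ref{Corollary0*} gives the same bound for $M_{\Omega,\alpha}$. For the endpoint $p=1$, one invokes the known weak $(1,q)$ boundedness of $\overline{T}_{\Omega,\alpha}$ (and hence, via the pointwise domination in Corollary~\ref{Corollary0*}, of $M_{\Omega,\alpha}$) under the condition $s>\frac{n}{n-\alpha}$, which in the range $1\le q<s$ considered here is automatic; this is exactly the hypothesis ``bounded from $L_{1}$ to $WL_{q}$ for $p=1$'' demanded by Theorem~\ref{teo9}.

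With these two facts in hand, the corollary is immediate: the pair $(\varphi_{1},\varphi_{2})$ is assumed to satisfy condition (\ref{316}) when $s'\leq p$ and condition (\ref{317}) when $q<s$, which are precisely the Zygmund-type hypotheses of Theorem~\ref{teo9}. Applying that theorem to $T_{\Omega,\alpha}=\overline{T}_{\Omega,\alpha}$ and then to $T_{\Omega,\alpha}=M_{\Omega,\alpha}$ yields the boundedness from $LM_{p,\varphi_{1}}^{\{x_{0}\}}$ to $LM_{q,\varphi_{2}}^{\{x_{0}\}}$ for $p>1$ and from $LM_{1,\varphi_{1}}^{\{x_{0}\}}$ to $WLM_{q,\varphi_{2}}^{\{x_{0}\}}$ for $p=1$, as claimed. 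The only genuinely non-routine point — and thus the main thing to be careful about — is the bookkeeping on the kernel exponents $s$ versus the conjugate $s'$: one must check that the two regimes ``$s'\le p$'' and ``$q<s$'' exhaust (or at least cover) the cases needed for the advertised $s$-range, and that in the $p=1$ endpoint the condition $q<s$ indeed forces $s>\frac{n}{n-\alpha}$ so that Lemma~\ref{Lemma1} and the weak-type endpoint genuinely apply.
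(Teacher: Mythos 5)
Your proposal is correct and follows exactly the route the paper intends: the corollary is a direct application of Theorem~\ref{teo9}, once one notes that $\overline{T}_{\Omega,\alpha}$ and (via the pointwise domination in Corollary~\ref{Corollary0*}) $M_{\Omega,\alpha}$ satisfy condition (\ref{e1}) and the required $(L_p,L_q)$ and weak $(1,q)$ bounds from Lemma~\ref{Lemma1}. The one check you flag but leave open is immediate in both regimes: if $s'\le p$ then $s\ge p'>\frac{n}{n-\alpha}$ because $p<\frac{n}{\alpha}$, and if $q<s$ then $s>q\ge\frac{n}{n-\alpha}$, so Lemma~\ref{Lemma1} indeed applies throughout.
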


Now using above results, we get the boundedness of the operator $T_{\Omega
,\alpha}$ on the generalized vanishing local Morrey spaces $VLM_{p,\varphi
}^{\left \{  x_{0}\right \}  }$.

\begin{theorem}
\label{teo10}(Our main result) Let $x_{0}\in{\mathbb{R}^{n}}$, $\Omega \in
L_{s}(S^{n-1})$, $1<s\leq \infty$, be homogeneous of degree zero. Let
$0<\alpha<n$, $1\leq p<\frac{n}{\alpha}$ and $\frac{1}{q}=\frac{1}{p}%
-\frac{\alpha}{n}$. Let $T_{\Omega,\alpha}$ be a sublinear operator satisfying
condition (\ref{e1}), bounded on $L_{p}({\mathbb{R}^{n}})$ for $p>1$, and
bounded from $L_{1}({\mathbb{R}^{n}})$ to $WL_{1}({\mathbb{R}^{n}})$. Let for
$s^{\prime}\leq p$, $p\neq1$, the pair $(\varphi_{1},\varphi_{2})$ satisfies
conditions (\ref{2})-(\ref{3}) and
\begin{equation}
c_{\delta}:=%
{\displaystyle \int \limits_{\delta}^{\infty}}
\varphi_{1}\left(  x_{0},t\right)  \frac{t^{\frac{n}{p}}}{t^{\frac{n}{q}+1}%
}dt<\infty \label{6}%
\end{equation}
for every $\delta>0$, and
\begin{equation}
\int \limits_{r}^{\infty}\varphi_{1}\left(  x_{0},t\right)  \frac{t^{\frac
{n}{p}}}{t^{\frac{n}{q}+1}}dt\leq C_{0}\varphi_{2}(x_{0},r),\label{7}%
\end{equation}
and for $q<s$ the pair $(\varphi_{1},\varphi_{2})$ satisfies conditions
(\ref{2})-(\ref{3}) and also%
\begin{equation}
c_{\delta^{\prime}}:=%
{\displaystyle \int \limits_{\delta^{\prime}}^{\infty}}
\varphi_{1}(x_{0},t)\frac{t^{\frac{n}{p}}}{t^{\frac{n}{q}-\frac{n}{s}+1}%
}dt<\infty \label{8}%
\end{equation}
for every $\delta^{\prime}>0$, and%
\begin{equation}
\int \limits_{r}^{\infty}\varphi_{1}(x_{0},t)\frac{t^{\frac{n}{p}}}{t^{\frac
{n}{q}-\frac{n}{s}+1}}dt\leq C_{0}\varphi_{2}(x_{0},r)r^{\frac{n}{s}%
},\label{9}%
\end{equation}
where $C_{0}$ does not depend on $r>0$.

Then the operator $T_{\Omega,\alpha}$ is bounded from $VLM_{p,\varphi_{1}%
}^{\{x_{0}\}}$ to $VLM_{q,\varphi_{2}}^{\{x_{0}\}}$ for $p>1$ and from
$VLM_{1,\varphi_{1}}^{\{x_{0}\}}$ to $WVLM_{q,\varphi_{2}}^{\{x_{0}\}}$ for
$p=1$. Moreover, we have for $p>1$%
\begin{equation}
\left \Vert T_{\Omega,\alpha}f\right \Vert _{VLM_{q,\varphi_{2}}^{\{x_{0}\}}%
}\lesssim \left \Vert f\right \Vert _{VLM_{p,\varphi_{1}}^{\{x_{0}\}}},\label{10}%
\end{equation}
and for $p=1$%
\begin{equation}
\left \Vert T_{\Omega,\alpha}f\right \Vert _{WVLM_{q,\varphi_{2}}^{\{x_{0}\}}%
}\lesssim \left \Vert f\right \Vert _{VLM_{1,\varphi_{1}}^{\{x_{0}\}}}.\label{11}%
\end{equation}

\end{theorem}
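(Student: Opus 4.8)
The plan is to derive Theorem~\ref{teo10} from the local (non-vanishing) estimate of Theorem~\ref{teo9} together with the pointwise integral bounds of Lemma~\ref{lemma2}, by carefully tracking how the vanishing property $\lim_{r\to 0}\mathfrak{M}_{p,\varphi_1}(f;x_0,r)=0$ propagates through the operator. First I would observe that since $f\in VLM_{p,\varphi_1}^{\{x_0\}}\subset LM_{p,\varphi_1}^{\{x_0\}}$ and the pair $(\varphi_1,\varphi_2)$ satisfies~(\ref{7}) (resp.~(\ref{9})), which implies the essinf-type condition~(\ref{316}) (resp.~(\ref{317})) of Theorem~\ref{teo9}, we already know $T_{\Omega,\alpha}f\in LM_{q,\varphi_2}^{\{x_0\}}$ with the norm bound~(\ref{3-1}). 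So the only thing left to prove is that $\lim_{r\to 0}\mathfrak{M}_{q,\varphi_2}(T_{\Omega,\alpha}f;x_0,r)=0$ (and its weak analogue for $p=1$).

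The core estimate is the one coming out of Lemma~\ref{lemma2}: for $s'\le p$, $p>1$,
\begin{equation*}
\varphi_2(x_0,r)^{-1}|B(x_0,r)|^{-\frac1q}\left\Vert T_{\Omega,\alpha}f\right\Vert_{L_q(B(x_0,r))}\lesssim \varphi_2(x_0,r)^{-1}\int\limits_{2r}^{\infty} t^{-\frac nq-1}\left\Vert f\right\Vert_{L_p(B(x_0,t))}\,dt.
\end{equation*}
I would split the right-hand integral at a parameter $\delta_0>0$ (to be sent to $0$ later), writing $\int_{2r}^{\infty}=\int_{2r}^{\delta_0}+\int_{\delta_0}^{\infty}$. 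For the first piece, using $\left\Vert f\right\Vert_{L_p(B(x_0,t))}\le \varphi_1(x_0,t)t^{n/p}\,\|f\|_{LM_{p,\varphi_1}^{\{x_0\}}}$ (up to the $|B(0,1)|^{1/p}$ constant) and the monotone convergence estimate, I get a bound of the form $C\,\varphi_2(x_0,r)^{-1}\int_{2r}^{\delta_0}\varphi_1(x_0,t)t^{n/p}t^{-n/q-1}\,dt\,\|f\|_{LM_{p,\varphi_1}^{\{x_0\}}}$; but here the sharper step is to replace $\|f\|_{LM}$ by the local quantity $\sup_{0<t<\delta_0}\mathfrak{M}_{p,\varphi_1}(f;x_0,t)=:\varepsilon_1(\delta_0)$, so that the first piece is $\le C_0\,\varepsilon_1(\delta_0)$ after using~(\ref{7}) to absorb $\varphi_2(x_0,r)^{-1}\int_{r}^{\infty}\varphi_1 t^{n/p}t^{-n/q-1}\,dt\lesssim 1$. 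Since $f\in VLM$, $\varepsilon_1(\delta_0)\to 0$ as $\delta_0\to 0$. For the second (tail) piece, $\int_{\delta_0}^{\infty} t^{-n/q-1}\left\Vert f\right\Vert_{L_p(B(x_0,t))}\,dt\le \|f\|_{LM_{p,\varphi_1}^{\{x_0\}}}\,c_{\delta_0}<\infty$ by~(\ref{6}), a finite constant depending on $\delta_0$; multiplying by $\varphi_2(x_0,r)^{-1}$ and using~(\ref{2}), namely $\varphi_2(x_0,r)^{-1}\to 0$ as $r\to 0$, this piece tends to $0$ as $r\to 0$ for each fixed $\delta_0$. Combining: $\limsup_{r\to 0}\mathfrak{M}_{q,\varphi_2}(T_{\Omega,\alpha}f;x_0,r)\le C_0\,\varepsilon_1(\delta_0)$ for every $\delta_0>0$, and letting $\delta_0\to 0$ gives the vanishing property. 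The case $q<s$ is handled identically with the second inequality of Lemma~\ref{lemma2}, using~(\ref{8})--(\ref{9}) in place of~(\ref{6})--(\ref{7}) and the factor $r^{n/q-n/s}$ absorbed by $|B(x_0,r)|^{-1/q}$ together with the extra $r^{n/s}$ in~(\ref{9}). The case $p=1$ uses~(\ref{e38}) and the weak quantities $\mathfrak{M}_{q,\varphi_2}^W$, $\mathfrak{M}_{1,\varphi_1}^W$, otherwise verbatim; note the hypothesis there is $L_1\to WL_1$ boundedness, but in the exponent $\frac1q=1-\frac\alpha n$ the relevant weak bound in Lemma~\ref{lemma2} is $L_1\to WL_q$, so I would note that the statement's phrasing is used together with the scaling structure already present in~(\ref{e38}).

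The main obstacle, and the step requiring care, is the interchange of limits: one must show $\limsup_{r\to0}\mathfrak{M}_{q,\varphi_2}(T_{\Omega,\alpha}f;x_0,r)\le C_0\varepsilon_1(\delta_0)$ uniformly in $r$ before sending $\delta_0\to0$, which forces the split to be done with $\delta_0$ independent of $r$ and requires that $\varphi_2(x_0,r)^{-1}\int_r^{\delta_0}(\cdots)\,dt$ stay bounded by a $\delta_0$-independent constant — this is exactly where~(\ref{7}) (resp.~(\ref{9})) is essential, not merely~(\ref{316}). A secondary technical point is ensuring the tail constants $c_{\delta_0}$, $c_{\delta_0'}$ are genuinely finite for \emph{every} $\delta_0>0$ (hypotheses~(\ref{6}),~(\ref{8})) so that the $\varphi_2(x_0,r)^{-1}\to0$ argument applies; without these one cannot separate the tail. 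I would close by remarking that $T_{\Omega,\alpha}f$ indeed lies in the target space (membership, not just a finite seminorm) because the vanishing condition plus the already-established $LM$-boundedness give both defining properties.
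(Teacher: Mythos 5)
Your proposal is correct and follows essentially the same route as the paper: reduce the norm bound to Theorem~\ref{teo9}, then establish the vanishing property by splitting the Hardy-type integral from Lemma~\ref{lemma2} at a threshold $\delta_{0}$, controlling the near-range piece via the vanishing of $f$ together with condition~(\ref{7}) and the tail via~(\ref{6}) and~(\ref{2}). Your $\limsup$-then-$\delta_{0}\to 0$ phrasing is just a repackaging of the paper's $\tfrac{\epsilon}{2}+\tfrac{\epsilon}{2}$ argument, and your side remark that the $p=1$ hypothesis should read $L_{1}\to WL_{q}$ correctly identifies a misprint in the theorem statement.
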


\begin{proof}
The norm inequalities follow from Theorem \ref{teo9}. Thus we only have to
prove that%
\begin{equation}
\lim \limits_{r\rightarrow0}\mathfrak{M}_{p,\varphi_{1}}\left(  f;x_{0}%
,r\right)  =0\text{ implies }\lim \limits_{r\rightarrow0}\mathfrak{M}%
_{q,\varphi_{2}}\left(  T_{\Omega,\alpha}f;x_{0},r\right)  =0\label{12}%
\end{equation}
and%
\begin{equation}
\lim \limits_{r\rightarrow0}\mathfrak{M}_{p,\varphi_{1}}\left(  f;x_{0}%
,r\right)  =0\text{ implies }\lim \limits_{r\rightarrow0}\mathfrak{M}%
_{q,\varphi_{2}}^{W}\left(  T_{\Omega,\alpha}f;x_{0},r\right)  =0.\label{0}%
\end{equation}

To show that $\frac{r^{-\frac{n}{q}}\left \Vert T_{\Omega,\alpha}f\right \Vert
_{L_{q}\left(  B\left(  x_{0},r\right)  \right)  }}{\varphi_{2}(x_{0}%
,r)}<\epsilon$ for small $r$, we split the right-hand side of (\ref{100}):%
\begin{equation}
\frac{r^{-\frac{n}{q}}\left \Vert T_{\Omega,\alpha}f\right \Vert _{L_{q}\left(
B\left(  x_{0},r\right)  \right)  }}{\varphi_{2}(x_{0},r)}\leq C\left[
I_{\delta_{0}}\left(  x_{0},r\right)  +J_{\delta_{0}}\left(  x_{0},r\right)
\right]  ,\label{13}%
\end{equation}
where $\delta_{0}>0$ (we may take $\delta_{0}<1$), and
\[
I_{\delta_{0}}\left(  x_{0},r\right)  :=\frac{1}{\varphi_{2}(x_{0},r)}%
{\displaystyle \int \limits_{r}^{\delta_{0}}}
t^{-\frac{n}{q}-1}\left \Vert f\right \Vert _{L_{p}\left(  B\left(
x_{0},t\right)  \right)  }dt,
\]
and%
\[
J_{\delta_{0}}\left(  x_{0},r\right)  :=\frac{1}{\varphi_{2}(x_{0},r)}%
{\displaystyle \int \limits_{\delta_{0}}^{\infty}}
t^{-\frac{n}{q}-1}\left \Vert f\right \Vert _{L_{p}\left(  B\left(
x_{0},t\right)  \right)  }dt,
\]
and $r<\delta_{0}$. Now we use the fact that $f\in VLM_{p,\varphi_{1}%
}^{\{x_{0}\}}$ and we choose any fixed $\delta_{0}>0$ such that%
\[
\frac{t^{-\frac{n}{p}}\left \Vert f\right \Vert _{L_{p}\left(  B\left(
x_{0},t\right)  \right)  }}{\varphi_{1}(x_{0},t)}<\frac{\epsilon}{2CC_{0}%
},\qquad t\leq \delta_{0},
\]
where $C$ and $C_{0}$ are constants from (\ref{7}) and (\ref{13}). This allows
to estimate the first term uniformly in $r\in \left(  0,\delta_{0}\right)  :$%
\[
CI_{\delta_{0}}\left(  x_{0},r\right)  <\frac{\epsilon}{2},\qquad
0<r<\delta_{0}.
\]

The estimation of the second term may be obtained by choosing $r$ sufficiently
small. Indeed, we have%
\[
J_{\delta_{0}}\left(  x_{0},r\right)  \leq c_{\delta_{0}}\frac{\left \Vert
f\right \Vert _{LM_{p,\varphi_{1}}^{\{x_{0}\}}}}{\varphi_{2}\left(
x_{0},r\right)  },
\]
where $c_{\delta_{0}}$ is the constant from (\ref{6}) with $\delta=\delta_{0}%
$. Then, by (\ref{2}) it suffices to choose $r$ small enough such that
\[
\frac{1}{\varphi_{2}(x_{0},r)}\leq \frac{\epsilon}{2c_{\delta_{0}}\left \Vert
f\right \Vert _{LM_{p,\varphi_{1}}^{\{x_{0}\}}}},
\]
which completes the proof of (\ref{12}).

The proof of (\ref{0}) is similar to the proof of (\ref{12}). For the case of
$q<s$, we can also use the same method, so we omit the details.
\end{proof}

\begin{remark}
Conditions (\ref{6}) and (\ref{8}) are not needed in the case when
$\varphi(x,r)$ does not depend on $x$, since (\ref{6}) follows from (\ref{7})
and similarly, (\ref{8}) follows from (\ref{9}) in this case.
\end{remark}

\begin{corollary}
Let $x_{0}\in{\mathbb{R}^{n}}$, $\Omega \in L_{s}(S^{n-1})$, $1<s\leq \infty$,
be homogeneous of degree zero. Let $0<\alpha<n$, $1\leq p<\frac{n}{\alpha}$
and $\frac{1}{q}=\frac{1}{p}-\frac{\alpha}{n}$. Let also for $s^{\prime}\leq
p$, $p\neq1$, the pair $(\varphi_{1},\varphi_{2})$ satisfies conditions
(\ref{2})-(\ref{3}) and (\ref{6})-(\ref{7}) and for $q<s $ the pair $\left(
\varphi_{1},\varphi_{2}\right)  $ satisfies conditions (\ref{2})-(\ref{3}) and
(\ref{8})-(\ref{9}). Then the operators $M_{\Omega,\alpha}$ and $\overline
{T}_{\Omega,\alpha}$ are bounded from $VLM_{p,\varphi_{1}}^{\{x_{0}\}}$ to
$VLM_{q,\varphi_{2}}^{\{x_{0}\}}$ for $p>1$ and from $VLM_{1,\varphi_{1}%
}^{\{x_{0}\}}$ to $WVLM_{q,\varphi_{2}}^{\{x_{0}\}}$ for $p=1$.
\end{corollary}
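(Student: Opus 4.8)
The plan is to reduce Theorem~\ref{teo10} to Theorem~\ref{teo9} together with the pointwise estimates of Lemma~\ref{lemma2}. By Theorem~\ref{teo9}, the norm inequalities \eqref{10} and \eqref{11} already hold, so the only thing that needs proof is that $T_{\Omega,\alpha}$ maps the \emph{vanishing} subspace into the vanishing subspace, i.e. the implications \eqref{12} and \eqref{0}. In other words, assuming $\lim_{r\to0}\mathfrak{M}_{p,\varphi_{1}}(f;x_{0},r)=0$, I must show $\lim_{r\to0}\mathfrak{M}_{q,\varphi_{2}}(T_{\Omega,\alpha}f;x_{0},r)=0$ (and its weak analogue for $p=1$).

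First I would fix $\epsilon>0$ and, using the estimate \eqref{100} from Lemma~\ref{lemma2}, bound $\mathfrak{M}_{q,\varphi_{2}}(T_{\Omega,\alpha}f;x_{0},r)$ by a constant times
$\varphi_{2}(x_{0},r)^{-1}\int_{r}^{\infty}t^{-n/q-1}\|f\|_{L_{p}(B(x_{0},t))}\,dt$. Then I would split the integral at a fixed radius $\delta_{0}\in(0,1)$, writing the right-hand side as $C[\,I_{\delta_{0}}(x_{0},r)+J_{\delta_{0}}(x_{0},r)\,]$ as in \eqref{13}, where $I_{\delta_{0}}$ is the integral over $(r,\delta_{0})$ and $J_{\delta_{0}}$ the integral over $(\delta_{0},\infty)$. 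For the near term $I_{\delta_{0}}$ I would use the vanishing hypothesis on $f$: choose $\delta_{0}$ so small that $t^{-n/p}\|f\|_{L_{p}(B(x_{0},t))}/\varphi_{1}(x_{0},t)<\epsilon/(2CC_{0})$ for all $t\le\delta_{0}$; then, writing $\|f\|_{L_{p}(B(x_{0},t))}=\varphi_{1}(x_{0},t)\,t^{n/p}\cdot\big(t^{-n/p}\|f\|_{L_{p}(B(x_{0},t))}/\varphi_{1}(x_{0},t)\big)$ and invoking the Zygmund-type condition \eqref{7}, one gets $CI_{\delta_{0}}(x_{0},r)<\epsilon/2$ uniformly in $r\in(0,\delta_{0})$. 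For the far term $J_{\delta_{0}}$ I would bound $\|f\|_{L_{p}(B(x_{0},t))}\le\varphi_{1}(x_{0},t)t^{n/p}\|f\|_{LM_{p,\varphi_{1}}^{\{x_{0}\}}}$ and use the finiteness condition \eqref{6} with $\delta=\delta_{0}$ to obtain $J_{\delta_{0}}(x_{0},r)\le c_{\delta_{0}}\|f\|_{LM_{p,\varphi_{1}}^{\{x_{0}\}}}/\varphi_{2}(x_{0},r)$; then by the vanishing-weight hypothesis \eqref{2}, $\lim_{r\to0}1/\varphi_{2}(x_{0},r)=0$, so for $r$ small enough $CJ_{\delta_{0}}(x_{0},r)<\epsilon/2$. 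Adding the two pieces gives $\mathfrak{M}_{q,\varphi_{2}}(T_{\Omega,\alpha}f;x_{0},r)<\epsilon$ for all sufficiently small $r$, which is \eqref{12}.

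The weak case $p=1$ is handled identically, replacing \eqref{100} by the weak estimate \eqref{e38} and $\mathfrak{M}$ by $\mathfrak{M}^{W}$, since the exponent structure $t^{-n/q-1}$ is the same; this yields \eqref{0}. The range $q<s$ is again the same argument with the modified kernel exponent: using the second estimate of Lemma~\ref{lemma2} one bounds $\mathfrak{M}_{q,\varphi_{2}}(T_{\Omega,\alpha}f;x_{0},r)$ by $C\varphi_{2}(x_{0},r)^{-1}r^{-n/s}\int_{r}^{\infty}t^{n/s-n/q-1}\|f\|_{L_{p}(B(x_{0},t))}\,dt$, splits at $\delta_{0}'$, and uses \eqref{8}--\eqref{9} in place of \eqref{6}--\eqref{7}.

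The main obstacle, and the only genuinely delicate point, is the uniform-in-$r$ control of the near term $I_{\delta_{0}}$: one must make sure that the choice of $\delta_{0}$ depends only on $\epsilon$ and not on $r$, which is exactly why the Zygmund-type condition \eqref{7} (and \eqref{9}) is imposed, and one must verify that pulling the supremum of the ``vanishing quotient'' out of the integral is legitimate — this uses that the quotient $t^{-n/p}\|f\|_{L_{p}(B(x_{0},t))}/\varphi_{1}(x_{0},t)$ is bounded by its supremum over $(0,\delta_{0}]$, which is $<\epsilon/(2CC_{0})$ by the choice of $\delta_{0}$, while the remaining factor $\int_{r}^{\delta_{0}}t^{-n/q-1}\varphi_{1}(x_{0},t)t^{n/p}\,dt$ is dominated by $\int_{r}^{\infty}$ and hence by $C_{0}\varphi_{2}(x_{0},r)$ via \eqref{7}. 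Everything else is bookkeeping with the definitions of the (weak) generalized vanishing local Morrey norms.
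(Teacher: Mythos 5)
Your argument is correct and is essentially the paper's own proof of Theorem \ref{teo10}: the same splitting of $\int_{r}^{\infty}$ at a fixed $\delta_{0}$, with the near part $I_{\delta_{0}}$ controlled uniformly in $r$ by the vanishing hypothesis on $f$ together with (\ref{7}), and the far part $J_{\delta_{0}}$ controlled by (\ref{6}) and the decay condition (\ref{2}) on $1/\varphi_{2}(x_{0},\cdot)$; the weak case and the range $q<s$ are handled the same way in the paper. The one thing you have not addressed is that the statement you were asked to prove is the \emph{corollary} about the two concrete operators $M_{\Omega,\alpha}$ and $\overline{T}_{\Omega,\alpha}$, not the general theorem: to invoke Theorem \ref{teo10} you must still note that $\overline{T}_{\Omega,\alpha}$ satisfies condition (\ref{e1}) directly from its definition, that $M_{\Omega,\alpha}$ satisfies it via the pointwise domination $M_{\Omega,\alpha}f(x)\leq C_{n,\alpha}^{-1}\widetilde{T}_{\left\vert \Omega\right\vert ,\alpha}\left(\left\vert f\right\vert\right)(x)$ from the proof of Corollary \ref{Corollary0*}, and that both operators have the required $L_{p}\to L_{q}$ (resp. $L_{1}\to WL_{q}$) bounds from Lemma \ref{Lemma1} and Corollary \ref{Corollary0*}. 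This verification is routine and already present earlier in the paper, so it is an omission of a citation rather than of an idea.
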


In the case of $q=\infty$ by Theorem \ref{teo10}, we get

\begin{corollary}
Let $x_{0}\in{\mathbb{R}^{n}}$,$1\leq p<\infty$ and the pair $(\varphi
_{1},\varphi_{2})$ satisfies conditions (\ref{2})-(\ref{3}) and (\ref{6}%
)-(\ref{7}). Then the operators $M_{\alpha}$ and $\overline{T}_{\alpha}$ are
bounded from $VLM_{p,\varphi_{1}}^{\{x_{0}\}}$ to $VLM_{q,\varphi_{2}%
}^{\{x_{0}\}}$ for $p>1$ and from $VLM_{1,\varphi_{1}}^{\{x_{0}\}}$ to
$WVLM_{q,\varphi_{2}}^{\{x_{0}\}}$ for $p=1$.
\end{corollary}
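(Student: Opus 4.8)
The plan is to reduce everything to the already-established local-Morrey boundedness (Theorem \ref{teo9}) plus the pointwise Lemma \ref{lemma2}, so the only genuinely new content is the vanishing property: if $f\in VLM_{p,\varphi_1}^{\{x_0\}}$ then $T_{\Omega,\alpha}f\in VLM_{q,\varphi_2}^{\{x_0\}}$, i.e. $\mathfrak{M}_{q,\varphi_2}(T_{\Omega,\alpha}f;x_0,r)\to0$ as $r\to0$. First I would invoke Theorem \ref{teo9} to get the norm inequalities \eqref{10}--\eqref{11} and the membership of $T_{\Omega,\alpha}f$ in the corresponding (weak) local Morrey space; this uses that conditions \eqref{7} and \eqref{9} imply \eqref{316} and \eqref{317} respectively (drop the $\operatorname*{essinf}$, which only makes the left side smaller). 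So it remains to prove the two implications \eqref{12} and \eqref{0}.

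For \eqref{12}, the strategy is the standard two-piece splitting at a threshold $\delta_0\in(0,1)$: starting from the pointwise estimate \eqref{100} of Lemma \ref{lemma2}, write
\[
\frac{r^{-n/q}\|T_{\Omega,\alpha}f\|_{L_q(B(x_0,r))}}{\varphi_2(x_0,r)}\le C\bigl[I_{\delta_0}(x_0,r)+J_{\delta_0}(x_0,r)\bigr],
\]
where $I_{\delta_0}$ integrates $t^{-n/q-1}\|f\|_{L_p(B(x_0,t))}$ over $(r,\delta_0)$ and $J_{\delta_0}$ over $(\delta_0,\infty)$, both divided by $\varphi_2(x_0,r)$. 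Fix $\epsilon>0$. Since $f\in VLM_{p,\varphi_1}^{\{x_0\}}$, choose $\delta_0$ so small that $t^{-n/p}\|f\|_{L_p(B(x_0,t))}\le \tfrac{\epsilon}{2CC_0}\,\varphi_1(x_0,t)$ for all $t\le\delta_0$; plugging this bound into $I_{\delta_0}$ and using \eqref{7} gives $CI_{\delta_0}(x_0,r)<\epsilon/2$ uniformly for $r<\delta_0$. For $J_{\delta_0}$, bound $\|f\|_{L_p(B(x_0,t))}\le t^{n/p}\varphi_1(x_0,t)\|f\|_{LM_{p,\varphi_1}^{\{x_0\}}}$, so that $J_{\delta_0}(x_0,r)\le c_{\delta_0}\|f\|_{LM_{p,\varphi_1}^{\{x_0\}}}/\varphi_2(x_0,r)$ with $c_{\delta_0}$ finite by \eqref{6}; then condition \eqref{2} ($1/\varphi_2(x_0,r)\to0$) lets us choose $r$ small enough that this term is also $<\epsilon/2$. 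Hence the left side is $<C\epsilon$ for all sufficiently small $r$, proving \eqref{12}.

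The proof of \eqref{0} for $p=1$ is identical word for word, using the weak-type estimate \eqref{e38} in place of \eqref{100}, the weak norm $\mathfrak{M}_{q,\varphi_2}^W$, and $L_1$ in place of $L_p$; the case $q<s$ is handled the same way with the second inequality of Lemma \ref{lemma2}, replacing the kernel $t^{-n/q-1}$ by $t^{n/s-n/q-1}$, the prefactor $r^{-n/q}$ by $r^{-n/q+n/s}$, and using conditions \eqref{8}--\eqref{9} instead of \eqref{6}--\eqref{7}; no new idea is needed, so I would just indicate it briefly. I do not anticipate a serious obstacle here — the real work was done in Lemma \ref{lemma2} and Theorem \ref{teo9}. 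The one point to be careful about is that the splitting threshold $\delta_0$ must be chosen before $r$ and depend only on $\epsilon$ and $f$ (not on $r$), so that the estimate of $I_{\delta_0}$ is genuinely uniform in $r\in(0,\delta_0)$; and one must make sure the constant $C_0$ used to pick $\delta_0$ is the same $C_0$ appearing in \eqref{7}/\eqref{9}, while $C$ is the constant from the splitting \eqref{13}, so that the two halves each come out below $\epsilon/2$.
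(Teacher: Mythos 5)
Your proposal is correct and follows essentially the same route as the paper: the paper obtains this corollary by specializing Theorem \ref{teo10} to $\Omega\equiv1$ (so $s=\infty$, $s'=1\leq p$), and the proof of Theorem \ref{teo10} is exactly your argument — the norm inequalities from Theorem \ref{teo9} (with \eqref{7} implying \eqref{316} by dropping the $\operatorname*{essinf}$) plus the $I_{\delta_0}+J_{\delta_0}$ splitting of \eqref{100}, using the vanishing of $f$ and \eqref{7} for $I_{\delta_0}$ and \eqref{6} together with \eqref{2} for $J_{\delta_0}$. Your cautionary remarks about choosing $\delta_0$ before $r$ and matching the constants $C$, $C_0$ coincide with the paper's own bookkeeping.
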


\section{Commutators of the linear operators with rough kernel $T_{\Omega
,\alpha}$ on the spaces $LM_{p,\varphi}^{\left \{  x_{0}\right \}  }$ and
$VLM_{p,\varphi}^{\left \{  x_{0}\right \}  }$}

In this section, we will first prove the boundedness of the operator
$T_{\Omega,b,\alpha}$ with $b\in LC_{p_{2},\lambda}^{\left \{  x_{0}\right \}
}$ on the generalized local Morrey spaces $LM_{p,\varphi}^{\left \{
x_{0}\right \}  }$ by using the following weighted Hardy operator%

\[
H_{\omega}^{\ast}g(r):=\int \limits_{r}^{\infty}\left(  1+\ln \frac{t}%
{r}\right)  g(t)\omega(t)dt,\text{ \  \  \ }r\in \left(  0,\infty \right)  ,
\]
where $\omega$ is a weight function. Then, we will also obtain the boundedness
of $T_{\Omega,b,\alpha}$ with $b\in LC_{p_{2},\lambda}^{\left \{
x_{0}\right \}  }$ on generalized vanishing local Morrey spaces $VLM_{p,\varphi
}^{\left \{  x_{0}\right \}  }$.

Let $T$ be a linear operator. For a locally integrable function $b$ on
${\mathbb{R}^{n}}$, we define the commutator $[b,T]$ by
\[
\lbrack b,T]f(x)=b(x)\,Tf(x)-T(bf)(x)
\]
for any suitable function $f$. Let $\overline{T}$ be a C--Z operator. A well
known result of Coifman et al. \cite{CRW} states that when $K\left(  x\right)
=\frac{\Omega \left(  x^{\prime}\right)  }{\left \vert x\right \vert ^{n}}$ and
$\Omega$ is smooth, the commutator $[b,\overline{T}]f=b\, \overline
{T}f-\overline{T}(bf)$ is bounded on $L_{p}({\mathbb{R}^{n}})$, $1<p<\infty$,
if and only if $b\in BMO({\mathbb{R}^{n}})$.

Since $BMO({\mathbb{R}^{n}})\subset \bigcap \limits_{p>1}LC_{p}^{\left \{
x_{0}\right \}  }({\mathbb{R}^{n}})$, if we only assume $b\in LC_{p}^{\left \{
x_{0}\right \}  }({\mathbb{R}^{n}})$, or more generally $b\in LC_{p,\lambda
}^{\left \{  x_{0}\right \}  }({\mathbb{R}^{n}})$, then $[b,\overline{T}]$ may
not be a bounded operator on $L_{p}({\mathbb{R}^{n}})$, $1<p<\infty$. However,
it has some boundedness properties on other spaces. As a matter of fact,
Grafakos et al. \cite{GraLiYang} have considered the commutator with $b\in
LC_{p}^{\left \{  x_{0}\right \}  }({\mathbb{R}^{n}})$ on Herz spaces for the
first time. Morever, in \cite{FuLinLu} and \cite{TaoShi}, they have considered
the commutators with $b\in LC_{p,\lambda}^{\left \{  x_{0}\right \}
}({\mathbb{R}^{n}})$. The commutator of C--Z operators plays an important role
in studying the regularity of solutions of elliptic partial differential
equations of second order (see, for example, \cite{ChFraL1, ChFraL2,
FazRag2}). The boundedness of the commutator has been generalized to other
contexts and important applications to some non-linear PDEs have been given by
Coifman et al. \cite{CLMS}. On the other hand, For $b\in L_{1}^{loc}%
({\mathbb{R}^{n}})$, the commutator $[b,\overline{T}_{\alpha}]$ of fractional
integral operator (also known as the Riesz potential) is defined by
\[
\lbrack b,\overline{T}_{\alpha}]f(x)=b(x)\overline{T}_{\alpha}f(x)-\overline
{T}_{\alpha}(bf)(x)=\int \limits_{{\mathbb{R}^{n}}}\frac{b(x)-b(y)}%
{|x-y|^{n-\alpha}}f(y)dy\qquad0<\alpha<n
\]
for any suitable function $f$.

The function $b$ is also called the symbol function of $[b,\overline
{T}_{\alpha}]$. The characterization of $\left(  L_{p},L_{q}\right)
$-boundedness of the commutator $[b,\overline{T}_{\alpha}]$ of fractional
integral operator has been given by Chanillo \cite{Chanillo}. A well known
result of Chanillo \cite{Chanillo} states that the commutator $[b,\overline
{T}_{\alpha}]$ is bounded from $L_{p}({\mathbb{R}^{n}})$ to $L_{q}%
({\mathbb{R}^{n}})$, $1<p<q<\infty$, $\frac{1}{p}-\frac{1}{q}=\frac{\alpha}%
{n}$ if and only if $b\in BMO({\mathbb{R}^{n}})$. There are two major reasons
for considering the problem of commutators. The first one is that the
boundedness of commutators can produce some characterizations of function
spaces (see \cite{BGGS, Chanillo, GulJMS2013, Gurbuz, Gurbuz2, Gurbuz3,
Janson, Palus, Shi}). The other one is that the theory of commutators plays an
important role in the study of the regularity of solutions to elliptic and
parabolic PDEs of the second order (see \cite{ChFraL1, ChFraL2, FazRag2,
FazPalRag, Softova}).

The definition of local Campanato space $LC_{p,\lambda}^{\left \{
x_{0}\right \}  }$ as follows.

\begin{definition}
\cite{BGGS, GulJMS2013, Gurbuz} Let $1\leq p<\infty$ and $0\leq \lambda
<\frac{1}{n}$. A function $f\in L_{p}^{loc}\left(  {\mathbb{R}^{n}}\right)  $
is said to belong to the $LC_{p,\lambda}^{\left \{  x_{0}\right \}  }\left(
{\mathbb{R}^{n}}\right)  $ (local Campanato space), if%
\begin{equation}
\left \Vert f\right \Vert _{LC_{p,\lambda}^{\left \{  x_{0}\right \}  }}%
=\sup_{r>0}\left(  \frac{1}{\left \vert B\left(  x_{0},r\right)  \right \vert
^{1+\lambda p}}\int \limits_{B\left(  x_{0},r\right)  }\left \vert f\left(
y\right)  -f_{B\left(  x_{0},r\right)  }\right \vert ^{p}dy\right)  ^{\frac
{1}{p}}<\infty,\label{e51}%
\end{equation}

where%
\[
f_{B\left(  x_{0},r\right)  }=\frac{1}{\left \vert B\left(  x_{0},r\right)
\right \vert }\int \limits_{B\left(  x_{0},r\right)  }f\left(  y\right)  dy.
\]

Define%
\[
LC_{p,\lambda}^{\left \{  x_{0}\right \}  }\left(  {\mathbb{R}^{n}}\right)
=\left \{  f\in L_{p}^{loc}\left(  {\mathbb{R}^{n}}\right)  :\left \Vert
f\right \Vert _{LC_{p,\lambda}^{\left \{  x_{0}\right \}  }}<\infty \right \}  .
\]

\end{definition}

\begin{remark}
If two functions which differ by a constant are regarded as a function in the
space $LC_{p,\lambda}^{\left \{  x_{0}\right \}  }\left(  {\mathbb{R}^{n}%
}\right)  $, then $LC_{p,\lambda}^{\left \{  x_{0}\right \}  }\left(
{\mathbb{R}^{n}}\right)  $ becomes a Banach space. The space $LC_{p,\lambda
}^{\left \{  x_{0}\right \}  }\left(  {\mathbb{R}^{n}}\right)  $ when
$\lambda=0$ is just the $LC_{p}^{\left \{  x_{0}\right \}  }({\mathbb{R}^{n}})$.
Apparently, (\ref{e51}) is equivalent to the following condition:%
\[
\sup_{r>0}\inf_{c\in%
\mathbb{C}
}\left(  \frac{1}{\left \vert B\left(  x_{0},r\right)  \right \vert ^{1+\lambda
p}}\int \limits_{B\left(  x_{0},r\right)  }\left \vert f\left(  y\right)
-c\right \vert ^{p}dy\right)  ^{\frac{1}{p}}<\infty.
\]

\end{remark}

In \cite{LuYang1}, Lu and Yang have introduced the central BMO space
$CBMO_{p}({\mathbb{R}^{n}})=LC_{p,0}^{\{0\}}({\mathbb{R}^{n}})$. Also the
space $CBMO^{\{x_{0}\}}({\mathbb{R}^{n}})=LC_{1,0}^{\{x_{0}\}}({\mathbb{R}%
^{n}})$ has been considered in other denotes in \cite{Rzaev}. The space
$LC_{p}^{\left \{  x_{0}\right \}  }({\mathbb{R}^{n}})$ can be regarded as a
local version of $BMO({\mathbb{R}^{n}})$, the space of bounded mean
oscillation, at the origin. But, they have quite different properties. The
classical John-Nirenberg inequality shows that functions in $BMO({\mathbb{R}%
^{n}})$ are locally exponentially integrable. This implies that, for any
$1\leq p<\infty$, the functions in $BMO({\mathbb{R}^{n}})$ can be described by
means of the condition:%
\[
\sup_{B\subset{\mathbb{R}^{n}}}\left(  \frac{1}{|B|}\int_{B}|f(y)-f_{B}%
|^{p}dy\right)  ^{\frac{1}{p}}<\infty,
\]
where $B$ denotes an arbitrary ball in ${\mathbb{R}^{n}}$. However, the space
$LC_{p}^{\left \{  x_{0}\right \}  }({\mathbb{R}^{n}})$ depends on $p$. If
$p_{1}<p_{2}$, then $LC_{p_{2}}^{\left \{  x_{0}\right \}  }({\mathbb{R}^{n}%
})\subsetneqq LC_{p_{1}}^{\left \{  x_{0}\right \}  }({\mathbb{R}^{n}})$.
Therefore, there is no analogy of the famous John-Nirenberg inequality of
$BMO({\mathbb{R}^{n}})$ for the space $LC_{p}^{\left \{  x_{0}\right \}
}({\mathbb{R}^{n}})$. One can imagine that the behavior of $LC_{p}^{\left \{
x_{0}\right \}  }({\mathbb{R}^{n}})$ may be quite different from that of
$BMO({\mathbb{R}^{n}})$ (see \cite{LuWu}).

\begin{theorem}
\label{teo13}\cite{BGGS, GulJMS2013, Gurbuz, Gurbuz2, Gurbuz3} Let $v_{1}$,
$v_{2}$ and $\omega$ be weigths on $(0,\infty)$ and $v_{1}\left(  t\right)  $
be bounded outside a neighbourhood of the origin. The inequality
\begin{equation}
\operatorname*{esssup}\limits_{r>0}v_{2}(r)H_{\omega}^{\ast}g(r)\leq
C\operatorname*{esssup}\limits_{r>0}v_{1}(r)g(r)\label{e52}%
\end{equation}
holds for some $C>0$ for all non-negative and non-decreasing functions $g$ on
$(0,\infty)$ if and only if
\begin{equation}
B:=\sup \limits_{r>0}v_{2}(r)\int \limits_{r}^{\infty}\left(  1+\ln \frac{t}%
{r}\right)  \frac{\omega(t)dt}{\operatorname*{esssup}\limits_{t<s<\infty}%
v_{1}(s)}<\infty.\label{e53}%
\end{equation}
Moreover, the value $C=B$ is the best constant for (\ref{e52}).
\end{theorem}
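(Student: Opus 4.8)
The plan is to prove the equivalence by establishing the two directions separately while keeping track of constants, so that the sharpness assertion $C=B$ falls out automatically. Throughout I will abbreviate $V(t):=\operatorname*{esssup}\limits_{t<s<\infty}v_{1}(s)$; this is non-increasing in $t$, hence $1/V$ is non-decreasing, and the hypothesis that $v_{1}$ is bounded outside a neighbourhood of the origin guarantees $V(t)<\infty$ once $t$ is bounded away from $0$, so $1/V$ is a legitimate non-decreasing function on $(0,\infty)$ (it may vanish near the origin if $V$ is infinite there). I will also record the elementary measure-theoretic fact that $v_{1}(t)\leq V(t)$ for a.e. $t>0$ — a measurable function does not essentially exceed its forward essential supremum outside the top level set — which is precisely what makes the test functions below admissible.

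For the sufficiency direction ($B<\infty$ implies $(\ref{e52})$ with $C\leq B$), I would take an arbitrary non-negative non-decreasing $g$, set $\lambda:=\operatorname*{esssup}_{s>0}v_{1}(s)g(s)$, and use monotonicity of $g$ to get $g(t)\leq \operatorname*{essinf}_{t<s<\infty}g(s)\leq \lambda\operatorname*{essinf}_{t<s<\infty}v_{1}(s)^{-1}=\lambda/V(t)$ for every $t$. Substituting this into $H_{\omega}^{\ast}g(r)=\int_{r}^{\infty}(1+\ln \frac{t}{r})g(t)\omega(t)\,dt$, multiplying by $v_{2}(r)$, and recognising the resulting integral as the one in $(\ref{e53})$ gives $v_{2}(r)H_{\omega}^{\ast}g(r)\leq \lambda B$ for all $r>0$; taking the essential supremum over $r$ closes this half. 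This step is routine and parallels the argument behind Theorem \ref{teo5}, the only new ingredient being the harmless extra factor $1+\ln \frac{t}{r}$ carried through the integral unchanged.

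For necessity (the inequality with some finite constant $C$ forces $B\leq C$), the idea is to feed near-optimal test functions into $(\ref{e52})$. For each $N\in \mathbb{N}$ put $g_{N}(t):=\min\{N,1/V(t)\}$; this is non-negative and non-decreasing, and by the preliminary fact $v_{1}(t)g_{N}(t)\leq v_{1}(t)/V(t)\leq1$ a.e., so the right-hand side of $(\ref{e52})$ is $\leq C$ for every $N$. Evaluating the left-hand side at a fixed $r_{0}$ gives $v_{2}(r_{0})H_{\omega}^{\ast}g_{N}(r_{0})\leq C$, and letting $N\to \infty$, by the monotone convergence theorem, yields $v_{2}(r_{0})\int_{r_{0}}^{\infty}(1+\ln \frac{t}{r_{0}})\omega(t)V(t)^{-1}\,dt\leq C$; taking the supremum over $r_{0}$ gives $B\leq C$. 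Together with the first half this shows that $(\ref{e52})$ holds if and only if $B<\infty$ and that $C=B$ is the best constant.

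The step I expect to need the most care is the admissibility of the test functions: I need each $g_{N}$ non-decreasing (immediate from monotonicity of $1/V$), finite (this is exactly why I truncate at $N$, since $1/V$ need not be bounded when $V(t)\to0$), and with $\operatorname*{esssup}_{r>0}v_{1}(r)g_{N}(r)$ finite and non-zero — and it is precisely here that the assumption that $v_{1}$ is bounded outside a neighbourhood of the origin enters, for otherwise $V\equiv \infty$, the $g_{N}$ collapse to $0$, and the best constant cannot be identified. A secondary point is the passage from the pointwise value $v_{2}(r_{0})H_{\omega}^{\ast}g_{N}(r_{0})$ to the essential supremum on the left of $(\ref{e52})$: since $H_{\omega}^{\ast}g_{N}$ depends continuously on $r_{0}$, this is legitimate for a.e. $r_{0}$, and the $\sup$ in the definition of $B$ may be read as an $\operatorname*{esssup}$. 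The fact $v_{1}\leq V$ a.e. is a short lemma (decompose into level sets of $v_{1}$) that I would either include or cite as standard.
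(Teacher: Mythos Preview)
The paper does not give its own proof of this theorem: it is quoted verbatim from the cited references \cite{BGGS, GulJMS2013, Gurbuz, Gurbuz2, Gurbuz3} and used as a black box in the proof of Theorem~\ref{teo15}. There is therefore nothing in the present paper to compare your argument against.

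That said, your proposal is correct and is exactly the standard argument behind this weighted Hardy-type equivalence (and behind the analogous Theorem~\ref{teo5}). The two directions are handled precisely as one expects: the sufficiency via the pointwise bound $g(t)\leq\lambda/V(t)$ coming from monotonicity of $g$, and the necessity via the truncated test functions $g_{N}=\min\{N,1/V\}$ together with monotone convergence. Your identification of the two delicate points is accurate: the a.e.\ inequality $v_{1}(t)\leq V(t)$ is the genuine content of the lemma you flag (and is where the cited references also spend a line), and the boundedness of $v_{1}$ away from the origin is indeed exactly what prevents the test functions from collapsing to zero. The minor discrepancy between $\sup$ in the definition of $B$ and $\operatorname*{esssup}$ on the left of \eqref{e52} that you note is present in the original statement and is handled the way you indicate.
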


\begin{remark}
In (\ref{e52}) and (\ref{e53}) it is assumed that $\frac{1}{\infty}=0$ and
$0.\infty=0$.
\end{remark}

\begin{lemma}
\label{Lemma 4}Let $b$ be function in $LC_{p,\lambda}^{\left \{  x_{0}\right \}
}\left(
\mathbb{R}
^{n}\right)  $, $1\leq p<\infty$, $0\leq \lambda<\frac{1}{n}$ and $r_{1}$,
$r_{2}>0$. Then%
\begin{equation}
\left(  \frac{1}{\left \vert B\left(  x_{0},r_{1}\right)  \right \vert
^{1+\lambda p}}%
{\displaystyle \int \limits_{B\left(  x_{0},r_{1}\right)  }}
\left \vert b\left(  y\right)  -b_{B\left(  x_{0},r_{2}\right)  }\right \vert
^{p}dy\right)  ^{\frac{1}{p}}\leq C\left(  1+\ln \frac{r_{1}}{r_{2}}\right)
\left \Vert b\right \Vert _{LC_{p,\lambda}^{\left \{  x_{0}\right \}  }},\label{a}%
\end{equation}
where $C>0$ is independent of $b$, $r_{1}$ and $r_{2}$.

From this inequality $\left(  \text{\ref{a}}\right)  $, we have%
\begin{equation}
\left \vert b_{B\left(  x_{0},r_{1}\right)  }-b_{B\left(  x_{0},r_{2}\right)
}\right \vert \leq C\left(  1+\ln \frac{r_{1}}{r_{2}}\right)  \left \vert
B\left(  x_{0},r_{1}\right)  \right \vert ^{\lambda}\left \Vert b\right \Vert
_{LC_{p,\lambda}^{\left \{  x_{0}\right \}  }},\label{b}%
\end{equation}

and it is easy to see that%
\begin{equation}
\left \Vert b-\left(  b\right)  _{B}\right \Vert _{L_{p}\left(  B\right)  }\leq
C\left(  1+\ln \frac{r_{1}}{r_{2}}\right)  r^{\frac{n}{p}+n\lambda}\left \Vert
b\right \Vert _{LC_{p,\lambda}^{\left \{  x_{0}\right \}  }}.\label{c}%
\end{equation}

\end{lemma}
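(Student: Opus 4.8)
The plan is to prove the three estimates \eqref{a}, \eqref{b}, \eqref{c} in sequence, since each is bootstrapped from the previous one together with the definition of the local Campanato norm. Throughout I write $B_i = B(x_0,r_i)$ and $b_{B_i}$ for the average of $b$ over $B_i$. The whole argument is a local variant of the classical $BMO$ telescoping estimate, with powers of $|B|$ absorbing the $\lambda$-parameter.

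For \eqref{a} I would first reduce to the case $r_1 > r_2$: when $r_1 \le r_2$ one has $B_1 \subseteq B_2$, and then
\[
\Big(\tfrac{1}{|B_1|^{1+\lambda p}}\!\int_{B_1}\!|b(y)-b_{B_2}|^p dy\Big)^{1/p}
\le \Big(\tfrac{1}{|B_1|^{1+\lambda p}}\!\int_{B_2}\!|b(y)-b_{B_2}|^p dy\Big)^{1/p}
\]
and since $|B_2|/|B_1| = (r_2/r_1)^n$ is comparable to a power that can be controlled by $1+\ln(r_2/r_1)$ only when $r_1\ge c\, r_2$; for $r_1$ much smaller than $r_2$ one instead splits $b - b_{B_2} = (b - b_{B_1}) + (b_{B_1} - b_{B_2})$ and uses \eqref{b}, so in practice I would prove \eqref{a} and \eqref{b} together by induction on the ratio. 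Concretely, for $r_1 > r_2$ choose the integer $N$ with $2^{N} r_2 \le r_1 < 2^{N+1} r_2$, so $N \approx \ln(r_1/r_2)$. Write
\[
b - b_{B_2} = \big(b - b_{B_1}\big) + \sum_{k=0}^{N} \big(b_{B(x_0,2^{k}r_2)\cap\{\cdots\}} - \cdots\big),
\]
i.e. telescope through the chain of balls $B_2 = B(x_0,r_2) \subset B(x_0,2r_2) \subset \cdots \subset B(x_0,2^N r_2) \subset B_1$. Each consecutive pair of averages $|b_{B(x_0,2^{k+1}r_2)} - b_{B(x_0,2^k r_2)}|$ is estimated by enlarging to the bigger ball: $|b_{B'} - b_{B''}| \le \frac{1}{|B''|}\int_{B''}|b - b_{B'}| \le C |B'|^{\lambda} \|b\|_{LC_{p,\lambda}^{\{x_0\}}}$ using Hölder and the doubling $|B'|\approx|B''|$. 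Summing the $N+1 \approx 1+\ln(r_1/r_2)$ terms, and noting $|B(x_0,2^k r_2)|^\lambda \le |B_1|^\lambda$, gives exactly \eqref{b}; feeding \eqref{b} back into the split $b - b_{B_2} = (b-b_{B_1})+(b_{B_1}-b_{B_2})$ and taking $L_p(B_1)$-norms divided by $|B_1|^{(1+\lambda p)/p}$ yields \eqref{a}, since $\|b-b_{B_1}\|_{L_p(B_1)}|B_1|^{-(1+\lambda p)/p} \le \|b\|_{LC_{p,\lambda}^{\{x_0\}}}$ by definition and the constant term contributes $|b_{B_1}-b_{B_2}|\,|B_1|^{1/p}|B_1|^{-(1+\lambda p)/p} = |b_{B_1}-b_{B_2}|\,|B_1|^{-\lambda}$, which by \eqref{b} is $\le C(1+\ln(r_1/r_2))\|b\|_{LC_{p,\lambda}^{\{x_0\}}}$.

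Finally \eqref{c} is immediate from \eqref{a}: with $B = B(x_0,r_1)$ one has, writing $|B| = v_n r_1^n$ so $|B|^{(1+\lambda p)/p} = v_n^{(1+\lambda p)/p} r_1^{n/p + n\lambda}$,
\[
\|b - b_B\|_{L_p(B)} = |B|^{(1+\lambda p)/p}\Big(\tfrac{1}{|B|^{1+\lambda p}}\int_B |b(y)-b_B|^p dy\Big)^{1/p} \le C\, r_1^{\frac{n}{p}+n\lambda}\Big(1+\ln\tfrac{r_1}{r_2}\Big)\|b\|_{LC_{p,\lambda}^{\{x_0\}}},
\]
where in the last step I applied \eqref{a} with the roles of $r_1,r_2$ as stated (the factor $1+\ln(r_1/r_2)$ being harmless and in fact $\ge 1$). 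I expect the main obstacle to be bookkeeping the doubling constants and the direction of the inequality in \eqref{a} — one must be careful that the logarithmic factor genuinely dominates the telescoping length regardless of whether $r_1$ or $r_2$ is larger, and that replacing an average over a small ball by one over a large ball (rather than the reverse) is what keeps the constant uniform; once the telescoping chain is set up correctly, the remaining estimates are routine applications of Hölder's inequality and the definition of $\|\cdot\|_{LC_{p,\lambda}^{\{x_0\}}}$.
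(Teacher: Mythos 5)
The paper itself contains no proof of Lemma \ref{Lemma 4}: the lemma is stated and then used, the argument being imported from the cited sources \cite{BGGS, GulJMS2013, Gurbuz}. Your dyadic telescoping through the chain $B(x_0,r_2)\subset B(x_0,2r_2)\subset\dots\subset B(x_0,2^Nr_2)\subset B(x_0,r_1)$ is exactly the standard argument given there, and for $r_2\le r_1$ your estimates are correct: the consecutive-averages bound $|b_{B'}-b_{B''}|\le \frac{|B'|}{|B''|}\bigl(\frac{1}{|B'|}\int_{B'}|b-b_{B'}|^p\,dy\bigr)^{1/p}\le 2^n|B'|^{\lambda}\Vert b\Vert_{LC_{p,\lambda}^{\{x_0\}}}$ is right, the number of telescoping terms is $O(1+\ln\frac{r_1}{r_2})$, the factors $|B(x_0,2^kr_2)|^{\lambda}$ are dominated by $|B(x_0,r_1)|^{\lambda}$ because $\lambda\ge0$, and feeding (\ref{b}) back into the splitting $b-b_{B_2}=(b-b_{B_1})+(b_{B_1}-b_{B_2})$ gives (\ref{a}). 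Estimate (\ref{c}) is then, as you say, just the definition of the norm multiplied by a factor that is $\ge1$, under the only sensible reading of the otherwise undefined symbols $B$ and $r$ there, namely $B=B(x_0,r)$ with $r=r_1$.

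The one genuine loose end is the case $r_1<r_2$, which you flag but do not resolve, and which in fact cannot be resolved: for $r_1<e^{-1}r_2$ the right-hand side of (\ref{a}) is negative while the left-hand side is not, so the inequality as literally stated fails unless $b$ is essentially constant. Your suggestion to handle this case ``by induction on the ratio'' would necessarily produce a factor $1+\ln\frac{r_2}{r_1}$, and $|B(x_0,r_2)|^{\lambda}$ in place of $|B(x_0,r_1)|^{\lambda}$ in (\ref{b}), not the stated quantities. The correct general statement replaces $1+\ln\frac{r_1}{r_2}$ by $1+\left\vert\ln\frac{r_1}{r_2}\right\vert$ and $|B(x_0,r_1)|^{\lambda}$ by $\max\left(|B(x_0,r_1)|,|B(x_0,r_2)|\right)^{\lambda}$; alternatively one restricts to $r_2\le r_1$, which is the only regime in which the lemma is invoked in this paper (there $r_1=t\ge 2r=2r_2$, so the logarithmic factor is bounded below by $1+\ln 2$). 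With that correction recorded, your proof is complete and coincides with the proof in the literature the paper relies on.
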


As in the proof of Theorem \ref{teo9}, it suffices to prove the following
Lemma \ref{Lemma 5}.

\begin{lemma}
\label{Lemma 5} Let $x_{0}\in{\mathbb{R}^{n}}$, $\Omega \in L_{s}(S^{n-1})$,
$1<s\leq \infty$, be homogeneous of degree zero. Let $T_{\Omega,\alpha}$ be a
linear operator satisfying condition (\ref{e1}), bounded from $L_{p}\left(
{\mathbb{R}^{n}}\right)  $ to $L_{q}\left(  {\mathbb{R}^{n}}\right)  $. Let
also $0<\alpha<n$, $1<p<\frac{n}{\alpha}$, $b\in LC_{p_{2},\lambda}^{\left \{
x_{0}\right \}  }({\mathbb{R}^{n}})$, $0\leq \lambda<\frac{1}{n}$, $\frac{1}%
{p}=\frac{1}{p_{1}}+\frac{1}{p_{2}}$, $\frac{1}{q}=\frac{1}{p}-\frac{\alpha
}{n}$, $\frac{1}{q_{1}}=\frac{1}{p_{1}}-\frac{\alpha}{n}$.Then, for
$s^{\prime}\leq p$ the inequality
\begin{equation}
\Vert T_{\Omega,b,\alpha}f\Vert_{L_{q}(B(x_{0},r))}\lesssim \Vert
b\Vert_{LC_{p_{2},\lambda}^{\{x_{0}\}}}\,r^{\frac{n}{q}}%
{\displaystyle \int \limits_{2r}^{\infty}}
\left(  1+\ln \frac{t}{r}\right)  t^{n\lambda-\frac{n}{q_{1}}-1}\Vert
f\Vert_{L_{p_{1}}(B(x_{0},t))}dt\label{200}%
\end{equation}
holds for any ball $B(x_{0},r)$ and for all $f\in L_{p_{1}}^{loc}%
({\mathbb{R}^{n}})$.

Also, for $q_{1}<s$ the inequality%
\[
\Vert T_{\Omega,b,\alpha}f\Vert_{L_{q}(B(x_{0},r))}\lesssim \Vert
b\Vert_{LC_{p_{2},\lambda}^{\{x_{0}\}}}\,r^{\frac{n}{q}-\frac{n}{s}}%
{\displaystyle \int \limits_{2r}^{\infty}}
\left(  1+\ln \frac{t}{r}\right)  t^{n\lambda-\frac{n}{q_{1}}+\frac{n}{s}%
-1}\Vert f\Vert_{L_{p_{1}}(B(x_{0},t))}dt
\]
holds for any ball $B(x_{0},r)$ and for all $f\in L_{p_{1}}^{loc}%
({\mathbb{R}^{n}})$.
\end{lemma}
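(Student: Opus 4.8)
The plan is to mirror the proof of Lemma \ref{lemma2}, inserting the extra logarithmic factor that comes from comparing averages of $b$ over balls of different radii via Lemma \ref{Lemma 4}. Fix the ball $B=B(x_0,r)$, write $2B=B(x_0,2r)$, set $b_0=b_{B(x_0,2r)}$, and decompose $f=f_1+f_2$ with $f_1=f\chi_{2B}$, $f_2=f\chi_{(2B)^C}$. The starting point is the pointwise identity
\[
T_{\Omega,b,\alpha}f(x)=(b(x)-b_0)\,T_{\Omega,\alpha}f(x)-T_{\Omega,\alpha}\big((b-b_0)f\big)(x),
\]
so that, after splitting $f$,
\[
\Vert T_{\Omega,b,\alpha}f\Vert_{L_q(B)}\lesssim \Vert(b-b_0)T_{\Omega,\alpha}f_1\Vert_{L_q(B)}+\Vert(b-b_0)T_{\Omega,\alpha}f_2\Vert_{L_q(B)}+\Vert T_{\Omega,\alpha}((b-b_0)f_1)\Vert_{L_q(B)}+\Vert T_{\Omega,\alpha}((b-b_0)f_2)\Vert_{L_q(B)}=:I_1+I_2+I_3+I_4.
\]

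For $I_1$ and $I_3$ (the "local" terms) I would apply H\"older's inequality with exponents tied by $\frac1q=\frac1{q_1}+\frac1{p_2}$: for $I_1$ use the $L_{q_1}\to$ boundedness of $T_{\Omega,\alpha}$ (coming from Lemma \ref{Lemma1} with exponent $p_1$) together with the $L_{p_2}$-estimate $\Vert b-b_0\Vert_{L_{p_2}(B)}\lesssim r^{\frac{n}{p_2}+n\lambda}\Vert b\Vert_{LC_{p_2,\lambda}^{\{x_0\}}}$ from (\ref{c}); for $I_3$ use H\"older first to bound $\Vert(b-b_0)f_1\Vert_{L_{p}(2B)}\le\Vert b-b_0\Vert_{L_{p_2}(2B)}\Vert f\Vert_{L_{p_1}(2B)}$ and then the $L_p\to L_q$ boundedness of $T_{\Omega,\alpha}$. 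Both are then absorbed into the tail integral exactly as in (\ref{e313}): one writes $r^{\frac nq}\Vert\cdot\Vert_{L_{p_1}(2B)}\approx r^{\frac nq}\Vert\cdot\Vert_{L_{p_1}(2B)}\int_{2r}^\infty t^{-\frac nq-1}dt\le r^{\frac nq}\int_{2r}^\infty t^{-\frac nq-1}\Vert f\Vert_{L_{p_1}(B(x_0,t))}dt$, and one notes $r^{-\frac nq}t^{\frac nq}\le(1+\ln\frac tr)t^{n\lambda}$-type bookkeeping gives the claimed integrand $t^{n\lambda-\frac{n}{q_1}-1}$ after using $\frac nq=\frac n{q_1}-\frac n{p_2}$ and $\frac n{p_2}\le n\lambda+\frac n{p_2}$; the factor $(1+\ln\frac tr)\ge1$ only helps.

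For $I_2$ and $I_4$ (the "tail" terms) I would use the size condition (\ref{e1}) and the geometric fact $|x-y|\approx|x_0-y|$ for $x\in B$, $y\in(2B)^C$, then Fubini exactly as in Lemma \ref{lemma2} to write $|T_{\Omega,\alpha}f_2(x)|\lesssim\int_{2r}^\infty\big(\int_{B(x_0,t)}|\Omega(x-y)||f(y)|dy\big)t^{\alpha-n-1}dt$, and likewise for $(b-b_0)f_2$. Applying H\"older's inequality inside in the variable $y$ with the three exponents $s$, $p_1$, $p_2$ (so $\frac1{p_1}+\frac1{p_2}+\frac1s+(1-\frac1{p}-\frac1s)=1$ since $s'\le p$), using the spherical estimate $\Vert\Omega(x-\cdot)\Vert_{L_s(B(x_0,t))}\lesssim t^{n/s}\Vert\Omega\Vert_{L_s(S^{n-1})}$ from (\ref{e311}), and then (\ref{a}) to bound $\Vert b-b_0\Vert_{L_{p_2}(B(x_0,t))}\lesssim(1+\ln\frac tr)\,t^{\frac n{p_2}+n\lambda}\Vert b\Vert_{LC_{p_2,\lambda}^{\{x_0\}}}$, one arrives at the bound $\lesssim\Vert b\Vert_{LC_{p_2,\lambda}^{\{x_0\}}}\int_{2r}^\infty(1+\ln\frac tr)t^{n\lambda-\frac n{q_1}-1}\Vert f\Vert_{L_{p_1}(B(x_0,t))}dt$ for $|T_{\Omega,b,\alpha}f_2(x)|$ uniformly in $x\in B$, and integrating over $B$ contributes the factor $r^{n/q}$. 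For the case $q_1<s$ one instead follows the Minkowski-inequality argument after (\ref{314}), pulling the $L_q(B)$-norm of $\Omega(\cdot-y)$ out, bounding it by $r^{\frac nq-\frac ns}\Vert\Omega(\cdot-y)\Vert_{L_s(B)}\lesssim r^{\frac nq-\frac ns}t^{n/s}$, which shifts the power of $r$ from $r^{n/q}$ to $r^{n/q-n/s}$ and the integrand from $t^{n\lambda-\frac n{q_1}-1}$ to $t^{n\lambda-\frac n{q_1}+\frac ns-1}$, giving the second displayed inequality.

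The main obstacle, as always in commutator estimates, is the careful bookkeeping of the decomposition into four pieces and the correct pairing of exponents: one must split $b-b_0$ with the $LC_{p_2,\lambda}$-norm, handle separately the average $b_0$ over the fixed ball $2B$ versus the running average $b_{B(x_0,t)}$ appearing when $t$ grows (this is precisely where $(1+\ln\frac tr)$ enters, via (\ref{a})), and verify that the powers of $t$ collapse to $n\lambda-\frac n{q_1}-1$ using $\frac1q=\frac1{q_1}+\frac1{p_2}$ and $\frac1{q_1}=\frac1{p_1}-\frac\alpha n$. Once the pointwise/integral bound for $T_{\Omega,b,\alpha}f_2$ is in hand and the local terms $I_1,I_3$ are absorbed into the tail integral via the trick of (\ref{e313}), the proof is complete; the passage to the generalized local and vanishing local Morrey spaces is then obtained from Theorem \ref{teo13} (with the weighted Hardy operator $H_\omega^\ast$) exactly as Theorem \ref{teo9} follows from Theorem \ref{teo5}.
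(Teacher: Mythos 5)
Your proposal is correct and follows essentially the same route as the paper: the same four-term decomposition $(b-b_B)T_{\Omega,\alpha}f_i$ and $T_{\Omega,\alpha}((b-b_B)f_i)$, the $L_{p_1}\to L_{q_1}$ and $L_p\to L_q$ boundedness plus H\"older for the local pieces, Fubini, the spherical estimates (\ref{e311})/(\ref{314}), Lemma \ref{Lemma 4} for the logarithmic factor, the absorption trick (\ref{e313}), and Minkowski's inequality for the case $q_1<s$. The only slips are cosmetic: the relation should read $\frac{1}{q}=\frac{1}{q_1}+\frac{1}{p_2}$ (you wrote a minus sign at one point), and your single application of (\ref{a}) to $\Vert b-b_0\Vert_{L_{p_2}(B(x_0,t))}$ legitimately compresses the paper's two-step splitting into $|b(y)-b_{B(x_0,t)}|+|b_{B(x_0,r)}-b_{B(x_0,t)}|$.
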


\begin{proof}
Let $1<p<\infty$, $0<\alpha<\frac{n}{p}$, $\frac{1}{p}=\frac{1}{p_{1}}%
+\frac{1}{p_{2}}$, $\frac{1}{q}=\frac{1}{p}-\frac{\alpha}{n}$ and $\frac
{1}{q_{1}}=\frac{1}{p_{1}}-\frac{\alpha}{n}$.As in the proof of Lemma
\ref{lemma2}, we represent $f$ in form (\ref{e39}) and have%
\begin{align*}
T_{\Omega,b,\alpha}f\left(  x\right)   & =\left(  b\left(  x\right)
-b_{B}\right)  T_{\Omega,\alpha}f_{1}\left(  x\right)  -T_{\Omega,\alpha
}\left(  \left(  b\left(  \cdot \right)  -b_{B}\right)  f_{1}\right)  \left(
x\right) \\
& +\left(  b\left(  x\right)  -b_{B}\right)  T_{\Omega,\alpha}f_{2}\left(
x\right)  -T_{\Omega,\alpha}\left(  \left(  b\left(  \cdot \right)
-b_{B}\right)  f_{2}\right)  \left(  x\right) \\
& \equiv J_{1}+J_{2}+J_{3}+J_{4}.
\end{align*}
Hence we get%
\[
\left \Vert T_{\Omega,b,\alpha}f\right \Vert _{L_{q}\left(  B\right)  }%
\leq \left \Vert J_{1}\right \Vert _{L_{q}\left(  B\right)  }+\left \Vert
J_{2}\right \Vert _{L_{q}\left(  B\right)  }+\left \Vert J_{3}\right \Vert
_{L_{q}\left(  B\right)  }+\left \Vert J_{4}\right \Vert _{L_{q}\left(
B\right)  }.
\]

By the H\"{o}lder's inequality, the boundedness of $T_{\Omega,\alpha}$ from
$L_{p_{1}}({\mathbb{R}^{n}})$ to $L_{q_{1}}\left(  {\mathbb{R}^{n}}\right)  $
(see Lemma \ref{Lemma1}) it follows that:%
\begin{align*}
\left \Vert J_{1}\right \Vert _{L_{q}\left(  B\right)  }  & \leq \left \Vert
\left(  b\left(  \cdot \right)  -b_{B}\right)  T_{\Omega,\alpha}f_{1}\left(
\cdot \right)  \right \Vert _{L_{q}\left(  {\mathbb{R}^{n}}\right)  }\\
& \lesssim \left \Vert \left(  b\left(  \cdot \right)  -b_{B}\right)  \right \Vert
_{L_{p_{2}}\left(  {\mathbb{R}^{n}}\right)  }\left \Vert T_{\Omega,\alpha}%
f_{1}\left(  \cdot \right)  \right \Vert _{L_{q_{1}}\left(  {\mathbb{R}^{n}%
}\right)  }\\
& \lesssim \left \Vert b\right \Vert _{LC_{p_{2},\lambda}^{\left \{
x_{0}\right \}  }}r^{\frac{n}{p_{2}}+n\lambda}\left \Vert f_{1}\right \Vert
_{L_{p_{1}}\left(  {\mathbb{R}^{n}}\right)  }\\
& =\left \Vert b\right \Vert _{LC_{p_{2},\lambda}^{\left \{  x_{0}\right \}  }%
}r^{\frac{n}{p_{2}}+\frac{n}{q_{1}}+n\lambda}\left \Vert f\right \Vert
_{L_{p_{1}}\left(  2B\right)  }\int \limits_{2r}^{\infty}t^{-1-\frac{n}{q_{1}}%
}dt\\
& \lesssim \left \Vert b\right \Vert _{LC_{p_{2},\lambda}^{\left \{
x_{0}\right \}  }}r^{\frac{n}{q}+n\lambda}\int \limits_{2r}^{\infty}\left(
1+\ln \frac{t}{r}\right)  \left \Vert f\right \Vert _{L_{p_{1}}\left(  B\left(
x_{0},t\right)  \right)  }t^{-1-\frac{n}{q_{1}}}dt.
\end{align*}

Using the the boundedness of $T_{\Omega,\alpha}$ from $L_{p}({\mathbb{R}^{n}%
})$ to $L_{q}\left(  {\mathbb{R}^{n}}\right)  $ (see Lemma \ref{Lemma1}), by
the H\"{o}lder's inequality for $J_{2}$, we have%
\begin{align*}
\left \Vert J_{2}\right \Vert _{L_{q}\left(  B\right)  } &  \leq \left \Vert
T_{\Omega,\alpha}^{P}\left(  b\left(  \cdot \right)  -b_{B}\right)
f_{1}\right \Vert _{L_{q}\left(  {\mathbb{R}^{n}}\right)  }\\
&  \lesssim \left \Vert \left(  b\left(  \cdot \right)  -b_{B}\right)
f_{1}\right \Vert _{L_{p}\left(  {\mathbb{R}^{n}}\right)  }\\
&  \lesssim \left \Vert b\left(  \cdot \right)  -b_{B}\right \Vert _{L_{p_{2}%
}\left(  {\mathbb{R}^{n}}\right)  }\left \Vert f_{1}\right \Vert _{L_{p_{1}%
}\left(  {\mathbb{R}^{n}}\right)  }\\
&  \lesssim \left \Vert b\right \Vert _{LC_{p_{2},\lambda}^{\left \{
x_{0}\right \}  }}r^{\frac{n}{p_{2}}+\frac{n}{q_{1}}+n\lambda}\left \Vert
f\right \Vert _{L_{p_{1}}\left(  2B\right)  }\int \limits_{2r}^{\infty
}t^{-1-\frac{n}{q_{1}}}dt\\
&  \lesssim \left \Vert b\right \Vert _{LC_{p_{2},\lambda}^{\left \{
x_{0}\right \}  }}r^{\frac{n}{q}+n\lambda}\int \limits_{2r}^{\infty}\left(
1+\ln \frac{t}{r}\right)  \left \Vert f\right \Vert _{L_{p_{1}}\left(  B\left(
x_{0},t\right)  \right)  }t^{-1-\frac{n}{q_{1}}}dt.
\end{align*}

For $J_{3}$, it is known that $x\in B$, $y\in \left(  2B\right)  ^{C}$, which
implies \ $\frac{1}{2}\left \vert x_{0}-y\right \vert \leq \left \vert
x-y\right \vert \leq \frac{3}{2}\left \vert x_{0}-y\right \vert $.

When $s^{\prime}\leq p_{1}$, by the Fubini's theorem, the H\"{o}lder's
inequality and (\ref{e311}), we have%
\begin{align*}
\left \vert T_{\Omega,\alpha}f_{2}\left(  x\right)  \right \vert  &  \leq
c_{0}\int \limits_{\left(  2B\right)  ^{C}}\left \vert \Omega \left(  x-y\right)
\right \vert \frac{\left \vert f\left(  y\right)  \right \vert }{\left \vert
x_{0}-y\right \vert ^{n-\alpha}}dy\\
&  \approx \int \limits_{2r}^{\infty}\int \limits_{2r<\left \vert x_{0}%
-y\right \vert <t}\left \vert \Omega \left(  x-y\right)  \right \vert \left \vert
f\left(  y\right)  \right \vert dyt^{-1-n+\alpha}dt\\
&  \lesssim \int \limits_{2r}^{\infty}\int \limits_{B\left(  x_{0},t\right)
}\left \vert \Omega \left(  x-y\right)  \right \vert \left \vert f\left(
y\right)  \right \vert dyt^{-1-n+\alpha}dt\\
&  \lesssim \int \limits_{2r}^{\infty}\left \Vert f\right \Vert _{L_{p_{1}}\left(
B\left(  x_{0},t\right)  \right)  }\left \Vert \Omega \left(  x-\cdot \right)
\right \Vert _{L_{s}\left(  B\left(  x_{0},t\right)  \right)  }\left \vert
B\left(  x_{0},t\right)  \right \vert ^{1-\frac{1}{p_{1}}-\frac{1}{s}%
}t^{-1-n+\alpha}dt\\
&  \lesssim \int \limits_{2r}^{\infty}\left \Vert f\right \Vert _{L_{p_{1}}\left(
B\left(  x_{0},t\right)  \right)  }t^{-1-\frac{n}{q_{1}}}dt.
\end{align*}
Hence, we get%
\begin{align*}
\left \Vert J_{3}\right \Vert _{L_{q}\left(  B\right)  } &  \leq \left \Vert
\left(  b\left(  \cdot \right)  -b_{B}\right)  T_{\Omega,\alpha}f_{2}\left(
\cdot \right)  \right \Vert _{L_{q}\left(  {\mathbb{R}^{n}}\right)  }\\
&  \lesssim \left \Vert \left(  b\left(  \cdot \right)  -b_{B}\right)
\right \Vert _{L_{q}\left(  {\mathbb{R}^{n}}\right)  }\int \limits_{2r}^{\infty
}\left \Vert f\right \Vert _{L_{p_{1}}\left(  B\left(  x_{0},t\right)  \right)
}t^{-1-\frac{n}{q_{1}}}dt\\
&  \lesssim \left \Vert \left(  b\left(  \cdot \right)  -b_{B}\right)
\right \Vert _{L_{p_{2}}\left(  {\mathbb{R}^{n}}\right)  }r^{\frac{n}{q_{1}}%
}\int \limits_{2r}^{\infty}\left \Vert f\right \Vert _{L_{p_{1}}\left(  B\left(
x_{0},t\right)  \right)  }t^{-1-\frac{n}{q_{1}}}dt\\
&  \lesssim \left \Vert b\right \Vert _{LC_{p_{2},\lambda}^{\left \{
x_{0}\right \}  }}r^{\frac{n}{q}+n\lambda}\int \limits_{2r}^{\infty}\left(
1+\ln \frac{t}{r}\right)  \left \Vert f\right \Vert _{L_{p_{1}}\left(  B\left(
x_{0},t\right)  \right)  }t^{-1-\frac{n}{q_{1}}}dt\\
&  \lesssim \left \Vert b\right \Vert _{LC_{p_{2},\lambda}^{\left \{
x_{0}\right \}  }}r^{\frac{n}{q}}\int \limits_{2r}^{\infty}\left(  1+\ln \frac
{t}{r}\right)  t^{n\lambda-\frac{n}{q_{1}}-1}\left \Vert f\right \Vert
_{L_{p_{1}}\left(  B\left(  x_{0},t\right)  \right)  }dt.
\end{align*}
When $q_{1}<s$, by the Fubini's theorem, the Minkowski inequality, the
H\"{o}lder's inequality and from (\ref{c}), (\ref{314}), we get%
\begin{align*}
\left \Vert J_{3}\right \Vert _{L_{q}\left(  B\right)  } &  \leq \left(
\int \limits_{B}\left \vert \int \limits_{2r}^{\infty}\int \limits_{B\left(
x_{0},t\right)  }\left \vert f\left(  y\right)  \right \vert \left \vert b\left(
x\right)  -b_{B}\right \vert \left \vert \Omega \left(  x-y\right)  \right \vert
dy\frac{dt}{t^{n-\alpha+1}}\right \vert ^{q}dx\right)  ^{\frac{1}{q}}\\
&  \leq \int \limits_{2r}^{\infty}\int \limits_{B\left(  x_{0},t\right)
}\left \vert f\left(  y\right)  \right \vert \left \Vert \left(  b\left(
\cdot \right)  -b_{B}\right)  \Omega \left(  \cdot-y\right)  \right \Vert
_{L_{q}\left(  B\right)  }dy\frac{dt}{t^{n-\alpha+1}}\\
&  \leq \int \limits_{2r}^{\infty}\int \limits_{B\left(  x_{0},t\right)
}\left \vert f\left(  y\right)  \right \vert \left \Vert b\left(  \cdot \right)
-b_{B}\right \Vert _{L_{p_{2}}\left(  B\right)  }\left \Vert \Omega \left(
\cdot-y\right)  \right \Vert _{L_{q_{1}}\left(  B\right)  }dy\frac
{dt}{t^{n-\alpha+1}}\\
&  \lesssim \left \Vert b\right \Vert _{LC_{p_{2},\lambda}^{\left \{
x_{0}\right \}  }}r^{\frac{n}{p_{2}}+n\lambda}\left \vert B\right \vert
^{\frac{1}{q_{1}}-\frac{1}{s}}\int \limits_{2r}^{\infty}\int \limits_{B\left(
x_{0},t\right)  }\left \vert f\left(  y\right)  \right \vert \left \Vert
\Omega \left(  \cdot-y\right)  \right \Vert _{L_{s}\left(  B\right)  }%
dy\frac{dt}{t^{n-\alpha+1}}\\
&  \lesssim \left \Vert b\right \Vert _{LC_{p_{2},\lambda}^{\left \{
x_{0}\right \}  }}r^{\frac{n}{q}-\frac{n}{s}+n\lambda}\int \limits_{2r}^{\infty
}\left \Vert f\right \Vert _{L_{1}\left(  B\left(  x_{0},t\right)  \right)
}\left \vert B\left(  x_{0},\frac{3}{2}t\right)  \right \vert ^{\frac{1}{s}%
}\frac{dt}{t^{n-\alpha+1}}\\
&  \lesssim \left \Vert b\right \Vert _{LC_{p_{2},\lambda}^{\left \{
x_{0}\right \}  }}r^{\frac{n}{q}-\frac{n}{s}+n\lambda}\int \limits_{2r}^{\infty
}\left(  1+\ln \frac{t}{r}\right)  \left \Vert f\right \Vert _{L_{p_{1}}\left(
B\left(  x_{0},t\right)  \right)  }\frac{dt}{t^{\frac{n}{q_{1}}-\frac{n}{s}%
+1}}\\
&  \lesssim \left \Vert b\right \Vert _{LC_{p_{2},\lambda}^{\left \{
x_{0}\right \}  }}r^{\frac{n}{q}-\frac{n}{s}}\int \limits_{2r}^{\infty}\left(
1+\ln \frac{t}{r}\right)  t^{n\lambda-\frac{n}{q_{1}}+\frac{n}{s}-1}\left \Vert
f\right \Vert _{L_{p_{1}}\left(  B\left(  x_{0},t\right)  \right)  }dt.
\end{align*}
On the other hand, for $J_{4}$, when $s^{\prime}\leq p$, for $x\in B$, by the
Fubini's theorem, applying the H\"{o}lder's inequality and from (\ref{b}),
(\ref{c}) (\ref{e311}) we have

$\left \vert T_{\Omega,\alpha}\left(  \left(  b\left(  \cdot \right)
-b_{B}\right)  f_{2}\right)  \left(  x\right)  \right \vert \lesssim%
{\displaystyle \int \limits_{\left(  2B\right)  ^{C}}}
\left \vert b\left(  y\right)  -b_{B}\right \vert \left \vert \Omega \left(
x-y\right)  \right \vert \frac{\left \vert f\left(  y\right)  \right \vert
}{\left \vert x-y\right \vert ^{n-\alpha}}dy$

$\lesssim%
{\displaystyle \int \limits_{\left(  2B\right)  ^{C}}}
\left \vert b\left(  y\right)  -b_{B}\right \vert \left \vert \Omega \left(
x-y\right)  \right \vert \frac{\left \vert f\left(  y\right)  \right \vert
}{\left \vert x_{0}-y\right \vert ^{n-\alpha}}dy$

$\approx%
{\displaystyle \int \limits_{2r}^{\infty}}
{\displaystyle \int \limits_{2r<\left \vert x_{0}-y\right \vert <t}}
\left \vert b\left(  y\right)  -b_{B}\right \vert \left \vert \Omega \left(
x-y\right)  \right \vert \left \vert f\left(  y\right)  \right \vert dy\frac
{dt}{t^{n-\alpha+1}}$

$\lesssim%
{\displaystyle \int \limits_{2r}^{\infty}}
{\displaystyle \int \limits_{B\left(  x_{0},t\right)  }}
\left \vert b\left(  y\right)  -b_{B\left(  x_{0},t\right)  }\right \vert
\left \vert \Omega \left(  x-y\right)  \right \vert \left \vert f\left(  y\right)
\right \vert dy\frac{dt}{t^{n-\alpha+1}}$

$+%
{\displaystyle \int \limits_{2r}^{\infty}}
\left \vert b_{B\left(  x_{0},r\right)  }-b_{B\left(  x_{0},t\right)
}\right \vert
{\displaystyle \int \limits_{B\left(  x_{0},t\right)  }}
\left \vert \Omega \left(  x-y\right)  \right \vert \left \vert f\left(  y\right)
\right \vert dy\frac{dt}{t^{n-\alpha+1}}$

$\lesssim%
{\displaystyle \int \limits_{2r}^{\infty}}
\left \Vert \left(  b\left(  \cdot \right)  -b_{B\left(  x_{0},t\right)
}\right)  f\right \Vert _{L_{p}\left(  B\left(  x_{0},t\right)  \right)
}\left \Vert \Omega \left(  \cdot-y\right)  \right \Vert _{L_{s}\left(  B\left(
x_{0},t\right)  \right)  }\left \vert B\left(  x_{0},t\right)  \right \vert
^{1-\frac{1}{p}-\frac{1}{s}}\frac{dt}{t^{n-\alpha+1}}$

$+%
{\displaystyle \int \limits_{2r}^{\infty}}
\left \vert b_{B\left(  x_{0},r\right)  }-b_{B\left(  x_{0},t\right)
}\right \vert \left \Vert f\right \Vert _{L_{p_{1}}\left(  B\left(
x_{0},t\right)  \right)  }\left \Vert \Omega \left(  \cdot-y\right)  \right \Vert
_{L_{s}\left(  B\left(  x_{0},t\right)  \right)  }\left \vert B\left(
x_{0},t\right)  \right \vert ^{1-\frac{1}{p_{1}}-\frac{1}{s}}t^{\alpha-n-1}dt$

$\lesssim%
{\displaystyle \int \limits_{2r}^{\infty}}
\left \Vert \left(  b\left(  \cdot \right)  -b_{B\left(  x_{0},t\right)
}\right)  \right \Vert _{L_{p_{2}}\left(  B\left(  x_{0},t\right)  \right)
}\left \Vert f\right \Vert _{L_{p_{1}}\left(  B\left(  x_{0},t\right)  \right)
}t^{-1-\frac{n}{q}}dt$

$+\left \Vert b\right \Vert _{LC_{p_{2},\lambda}^{\left \{  x_{0}\right \}  }}%
{\displaystyle \int \limits_{2r}^{\infty}}
\left(  1+\ln \frac{t}{r}\right)  \left \Vert f\right \Vert _{L_{p_{1}}\left(
B\left(  x_{0},t\right)  \right)  }t^{-1-\frac{n}{q_{1}}+n\lambda}dt$

$\lesssim \left \Vert b\right \Vert _{LC_{p_{2},\lambda}^{\left \{  x_{0}\right \}
}}%
{\displaystyle \int \limits_{2r}^{\infty}}
\left(  1+\ln \frac{t}{r}\right)  \left \Vert f\right \Vert _{L_{p_{1}}\left(
B\left(  x_{0},t\right)  \right)  }t^{-1-\frac{n}{q_{1}}+n\lambda}dt.$

Then, we have%
\begin{align*}
\left \Vert J_{4}\right \Vert _{L_{q}\left(  B\right)  }  & =\left \Vert
T_{\Omega,\alpha}\left(  b\left(  \cdot \right)  -b_{B}\right)  f_{2}%
\right \Vert _{L_{q}\left(  B\right)  }\\
& \lesssim \left \Vert b\right \Vert _{LC_{p_{2},\lambda}^{\left \{
x_{0}\right \}  }}r^{\frac{n}{q}}%
{\displaystyle \int \limits_{2r}^{\infty}}
\left(  1+\ln \frac{t}{r}\right)  t^{n\lambda-\frac{n}{q_{1}}-1}\left \Vert
f\right \Vert _{L_{p_{1}}\left(  B\left(  x_{0},t\right)  \right)  }dt.
\end{align*}

When $q<s$, by the Fubini's theorem, the Minkowski inequality, the
H\"{o}lder's inequality and from (\ref{b}), (\ref{c}), (\ref{314}) we have%
\begin{align*}
\left \Vert J_{4}\right \Vert _{L_{q}\left(  B\right)  } &  \leq \left(
\int \limits_{B}\left \vert \int \limits_{2r}^{\infty}\int \limits_{B\left(
x_{0},t\right)  }\left \vert b\left(  y\right)  -b_{B\left(  x_{0},t\right)
}\right \vert \left \vert f\left(  y\right)  \right \vert \left \vert
\Omega \left(  x-y\right)  \right \vert dy\frac{dt}{t^{n-\alpha+1}}\right \vert
^{q}dx\right)  ^{\frac{1}{q}}\\
&  +\left(  \int \limits_{B}\left \vert \int \limits_{2r}^{\infty}\left \vert
b_{B\left(  x_{0},r\right)  }-b_{B\left(  x_{0},t\right)  }\right \vert
\int \limits_{B\left(  x_{0},t\right)  }\left \vert f\left(  y\right)
\right \vert \left \vert \Omega \left(  x-y\right)  \right \vert dy\frac
{dt}{t^{n-\alpha+1}}\right \vert ^{q}dx\right)  ^{\frac{1}{q}}\\
&  \lesssim \int \limits_{2r}^{\infty}\int \limits_{B\left(  x_{0},t\right)
}\left \vert b\left(  y\right)  -b_{B\left(  x_{0},t\right)  }\right \vert
\left \vert f\left(  y\right)  \right \vert \left \Vert \Omega \left(
\cdot-y\right)  \right \Vert _{L_{q}\left(  B\left(  x_{0},t\right)  \right)
}dy\frac{dt}{t^{n-\alpha+1}}\\
&  +\int \limits_{2r}^{\infty}\left \vert b_{B\left(  x_{0},r\right)
}-b_{B\left(  x_{0},t\right)  }\right \vert \int \limits_{B\left(
x_{0},t\right)  }\left \vert f\left(  y\right)  \right \vert \left \Vert
\Omega \left(  \cdot-y\right)  \right \Vert _{L_{q}\left(  B\left(
x_{0},t\right)  \right)  }dy\frac{dt}{t^{n-\alpha+1}}\\
&  \lesssim \left \vert B\right \vert ^{\frac{1}{q}-\frac{1}{s}}\int
\limits_{2r}^{\infty}\int \limits_{B\left(  x_{0},t\right)  }\left \vert
b\left(  y\right)  -b_{B\left(  x_{0},t\right)  }\right \vert \left \vert
f\left(  y\right)  \right \vert \left \Vert \Omega \left(  \cdot-y\right)
\right \Vert _{L_{s}\left(  B\left(  x_{0},t\right)  \right)  }dy\frac
{dt}{t^{n-\alpha+1}}\\
&  +\left \vert B\right \vert ^{\frac{1}{q}-\frac{1}{s}}\int \limits_{2r}%
^{\infty}\left \vert b_{B\left(  x_{0},r\right)  }-b_{B\left(  x_{0},t\right)
}\right \vert \int \limits_{B\left(  x_{0},t\right)  }\left \vert f\left(
y\right)  \right \vert \left \Vert \Omega \left(  \cdot-y\right)  \right \Vert
_{L_{s}\left(  B\left(  x_{0},t\right)  \right)  }dy\frac{dt}{t^{n-\alpha+1}%
}\\
&  \lesssim r^{\frac{n}{q}-\frac{n}{s}}\int \limits_{2r}^{\infty}\left \Vert
\left(  b\left(  \cdot \right)  -b_{B\left(  x_{0},t\right)  }\right)
\right \Vert _{L_{p_{2}}\left(  B\left(  x_{0},t\right)  \right)  }\left \Vert
f\right \Vert _{L_{p_{1}}\left(  B\left(  x_{0},t\right)  \right)  }\left \vert
B\left(  x_{0},t\right)  \right \vert ^{1-\frac{1}{p}}\left \vert B\left(
x_{0},\frac{3}{2}t\right)  \right \vert ^{\frac{1}{s}}\frac{dt}{t^{n-\alpha+1}%
}\\
&  +r^{\frac{n}{q}-\frac{n}{s}}\int \limits_{2r}^{\infty}\left \vert b_{B\left(
x_{0},r\right)  }-b_{B\left(  x_{0},t\right)  }\right \vert \left \Vert
f\right \Vert _{L_{p_{1}}\left(  B\left(  x_{0},t\right)  \right)  }\left \vert
B\left(  x_{0},\frac{3}{2}t\right)  \right \vert ^{\frac{1}{s}}\frac
{dt}{t^{\frac{n}{p_{1}}-\alpha+1}}\\
&  \lesssim r^{\frac{n}{q}-\frac{n}{s}}\left \Vert b\right \Vert _{LC_{p_{2}%
,\lambda}^{\left \{  x_{0}\right \}  }}\int \limits_{2r}^{\infty}\left(
1+\ln \frac{t}{r}\right)  t^{n\lambda-\frac{n}{q_{1}}+\frac{n}{s}-1}\left \Vert
f\right \Vert _{L_{p_{1}}\left(  B\left(  x_{0},t\right)  \right)  }dt.
\end{align*}

By combining the above estimates, we complete the proof of Lemma \ref{Lemma 5}.
\end{proof}

Now we can give the following theorem (our main result).

\begin{theorem}
\label{teo15}Let $x_{0}\in{\mathbb{R}^{n}}$, $\Omega \in L_{s}(S^{n-1})$,
$1<s\leq \infty$, be homogeneous of degree zero. Let $T_{\Omega,\alpha}$ be a
linear operator satisfying condition (\ref{e1}) and bounded from
$L_{p}({\mathbb{R}^{n}})$ to $L_{q}({\mathbb{R}^{n}})$. Let $0<\alpha<n$,
$1<p<\frac{n}{\alpha}$, $b\in LC_{p_{2},\lambda}^{\left \{  x_{0}\right \}
}\left(
\mathbb{R}
^{n}\right)  $, $0\leq \lambda<\frac{1}{n}$, $\frac{1}{p}=\frac{1}{p_{1}}%
+\frac{1}{p_{2}}$, $\frac{1}{q}=\frac{1}{p}-\frac{\alpha}{n}$, $\frac{1}%
{q_{1}}=\frac{1}{p_{1}}-\frac{\alpha}{n}$.

Let also, for $s^{\prime}\leq p$ the pair $(\varphi_{1},\varphi_{2})$
satisfies the condition%
\begin{equation}
\int \limits_{r}^{\infty}\left(  1+\ln \frac{t}{r}\right)  \frac
{\operatorname*{essinf}\limits_{t<\tau<\infty}\varphi_{1}(x_{0},\tau
)\tau^{\frac{n}{p_{1}}}}{t^{\frac{n}{q_{1}}+1-n\lambda}}dt\leq C\, \varphi
_{2}(x_{0},r),\label{47}%
\end{equation}
and for $q_{1}<s$ the pair $(\varphi_{1},\varphi_{2})$ satisfies the condition%
\begin{equation}
\int \limits_{r}^{\infty}\left(  1+\ln \frac{t}{r}\right)  \frac
{\operatorname*{essinf}\limits_{t<\tau<\infty}\varphi_{1}(x_{0},\tau
)\tau^{\frac{n}{p_{1}}}}{t^{\frac{n}{q_{1}}-\frac{n}{s}+1-n\lambda}}dt\leq C\,
\varphi_{2}(x_{0},r)r^{\frac{n}{s}},\label{48}%
\end{equation}
where $C$ does not depend on $r$.

Then, the operator $T_{\Omega,b,\alpha}$ is bounded from $LM_{p_{1}%
,\varphi_{1}}^{\{x_{0}\}}$ to $LM_{q,\varphi_{2}}^{\{x_{0}\}}$. Moreover,
\[
\left \Vert T_{\Omega,b,\alpha}\right \Vert _{LM_{q,\varphi_{2}}^{\{x_{0}\}}%
}\lesssim \left \Vert b\right \Vert _{LC_{p_{2},\lambda}^{\left \{  x_{0}\right \}
}}\left \Vert f\right \Vert _{LM_{p_{1},\varphi_{1}}^{\{x_{0}\}}}.
\]

\end{theorem}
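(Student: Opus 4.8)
The plan is to mirror exactly the strategy used for Theorem~\ref{teo9}, now feeding the pointwise Morrey-norm estimate of Lemma~\ref{Lemma 5} into the weighted-Hardy criterion of Theorem~\ref{teo13} instead of Theorem~\ref{teo5}. First I would fix $1<p<\frac{n}{\alpha}$ together with the exponent relations $\frac{1}{p}=\frac{1}{p_{1}}+\frac{1}{p_{2}}$, $\frac{1}{q}=\frac{1}{p}-\frac{\alpha}{n}$, $\frac{1}{q_{1}}=\frac{1}{p_{1}}-\frac{\alpha}{n}$, and dispose of the factor $\left \Vert b\right \Vert _{LC_{p_{2},\lambda}^{\{x_{0}\}}}$ once and for all by pulling it outside (it is the constant in front of both bounds of Lemma~\ref{Lemma 5}). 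Dividing the key inequality \eqref{200} by $\varphi_{2}(x_{0},r)$ and taking $\sup_{r>0}$, the task reduces to showing
\[
\sup_{r>0}\frac{1}{\varphi_{2}(x_{0},r)}\int \limits_{2r}^{\infty}\left(1+\ln \frac{t}{r}\right)t^{n\lambda-\frac{n}{q_{1}}-1}\left \Vert f\right \Vert _{L_{p_{1}}(B(x_{0},t))}\,dt\lesssim \left \Vert f\right \Vert _{LM_{p_{1},\varphi_{1}}^{\{x_{0}\}}}.
\]

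The second step is the substitution into Theorem~\ref{teo13}. I would choose $v_{2}(r)=\varphi_{2}(x_{0},r)^{-1}$, $v_{1}(r)=\varphi_{1}(x_{0},r)^{-1}r^{-n/p_{1}}$, the weight $\omega(t)=t^{n\lambda-n/q_{1}-1}$, and the non-negative non-decreasing test function $g(r)=\left \Vert f\right \Vert _{L_{p_{1}}(B(x_{0},r))}$; the hypothesis that $v_{1}$ is bounded outside a neighbourhood of the origin is exactly \eqref{3}. Then $H_{\omega}^{\ast}g(r)$ is precisely (up to the inessential lower limit $2r$ versus $r$, which only changes constants by enlarging the integration range) the integral above, $v_{1}(r)g(r)=\mathfrak{M}_{p_{1},\varphi_{1}}(f;x_{0},r)$ so the right-hand side of \eqref{e52} equals $\left \Vert f\right \Vert _{LM_{p_{1},\varphi_{1}}^{\{x_{0}\}}}$, and one checks that the finiteness condition \eqref{e53} with these choices is literally \eqref{47}: indeed $\operatorname{esssup}_{t<s<\infty}v_{1}(s)=\left(\operatorname{essinf}_{t<s<\infty}\varphi_{1}(x_{0},s)s^{n/p_{1}}\right)^{-1}$, so \eqref{e53} becomes $\sup_{r>0}\varphi_{2}(x_{0},r)^{-1}\int_{r}^{\infty}(1+\ln \frac{t}{r})\,\operatorname{essinf}_{t<\tau<\infty}\varphi_{1}(x_{0},\tau)\tau^{n/p_{1}}\,t^{n\lambda-n/q_{1}-1}\,dt<\infty$, which is \eqref{47}. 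Applying Theorem~\ref{teo13} then yields the desired bound in the case $s^{\prime}\le p$.

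For the case $q_{1}<s$ I would run the identical argument with the second estimate of Lemma~\ref{Lemma 5}: the only changes are $v_{2}(r)=\varphi_{2}(x_{0},r)^{-1}r^{-n/s}$, $v_{1}(r)=\varphi_{1}(x_{0},r)^{-1}r^{-n/p_{1}+n/s}$, $\omega(t)=t^{n\lambda-n/q_{1}+n/s-1}$, and then \eqref{e53} becomes precisely \eqref{48}. Combining the two cases, and recalling that $\left \Vert f\right \Vert _{VLM_{p,\varphi}^{\{x_{0}\}}}=\left \Vert f\right \Vert _{LM_{p,\varphi}^{\{x_{0}\}}}$ as norms, gives
\[
\left \Vert T_{\Omega,b,\alpha}f\right \Vert _{LM_{q,\varphi_{2}}^{\{x_{0}\}}}\lesssim \left \Vert b\right \Vert _{LC_{p_{2},\lambda}^{\{x_{0}\}}}\left \Vert f\right \Vert _{LM_{p_{1},\varphi_{1}}^{\{x_{0}\}}},
\]
which is the assertion. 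I do not expect a genuine obstacle here: Lemma~\ref{Lemma 5} has already absorbed all the real analytic work (the Fubini/Hölder/Minkowski manipulations, the rough-kernel estimate \eqref{e311}, and the logarithmic Campanato bounds \eqref{b}, \eqref{c} from Lemma~\ref{Lemma 4}), and Theorem~\ref{teo13} is a black box. The one point requiring a little care is the bookkeeping of exponents when matching $v_{1},v_{2},\omega$ to the target integrals so that \eqref{e53} reproduces \eqref{47} and \eqref{48} exactly — in particular keeping the $\operatorname{essinf}$/$\operatorname{esssup}$ duality straight and verifying that replacing the lower limit $2r$ by $r$ in $H_{\omega}^{\ast}$ costs only a constant. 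That is the step I would write out most carefully.
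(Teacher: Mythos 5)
Your proposal is correct and follows exactly the route the paper takes: its proof of Theorem~\ref{teo15} is the one-line observation that the result follows from Lemma~\ref{Lemma 5} and Theorem~\ref{teo13} in the same manner as Theorem~\ref{teo9}, which is precisely the substitution of $v_{1}$, $v_{2}$, $\omega$, $g$ you carry out. Your write-up simply makes explicit the exponent bookkeeping that the paper leaves implicit.
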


\begin{proof}
The statement of Theorem \ref{teo15} follows by Lemma \ref{Lemma 5} and
Theorem \ref{teo13} in the same manner as in the proof of Theorem \ref{teo9}.
\end{proof}

For the sublinear commutator of the fractional maximal operator with rough
kernel which is defined as follows%

\[
M_{\Omega,b,\alpha}\left(  f\right)  (x)=\sup_{t>0}|B(x,t)|^{-1+\frac{\alpha
}{n}}\int \limits_{B(x,t)}\left \vert b\left(  x\right)  -b\left(  y\right)
\right \vert \left \vert \Omega \left(  x-y\right)  \right \vert |f(y)|dy
\]
by Theorem \ref{teo15} we get the following new result.

\begin{corollary}
Suppose that $x_{0}\in{\mathbb{R}^{n}}$, $\Omega \in L_{s}(S^{n-1})$,
$1<s\leq \infty$, is homogeneous of degree zero. Let $0<\alpha<n$,
$1<p<\frac{n}{\alpha}$, $b\in LC_{p_{2},\lambda}^{\left \{  x_{0}\right \}
}\left(
\mathbb{R}
^{n}\right)  $, $0\leq \lambda<\frac{1}{n}$, $\frac{1}{p}=\frac{1}{p_{1}}%
+\frac{1}{p_{2}}$, $\frac{1}{q}=\frac{1}{p}-\frac{\alpha}{n}$, $\frac{1}%
{q_{1}}=\frac{1}{p_{1}}-\frac{\alpha}{n}$. Let also, for $s^{\prime}\leq p$
the pair $(\varphi_{1},\varphi_{2})$ satisfies the condition (\ref{47}) and
for $q_{1}<s$ the pair $(\varphi_{1},\varphi_{2})$ satisfies the condition
(\ref{48}). Then, the operators $M_{\Omega,b,\alpha}$ and $[b,\overline
{T}_{\Omega,\alpha}]$ are bounded from $LM_{p_{1},\varphi_{1}}^{\{x_{0}\}}$ to
$LM_{q,\varphi_{2}}^{\{x_{0}\}}$.
\end{corollary}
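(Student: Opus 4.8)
The plan is to obtain both boundedness statements directly from Theorem~\ref{teo15}, with the maximal commutator $M_{\Omega,b,\alpha}$ requiring in addition the pointwise reduction already exploited in the proof of Corollary~\ref{Corollary0*}.

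First I would dispose of $[b,\overline{T}_{\Omega,\alpha}]$. This operator is exactly $T_{\Omega,b,\alpha}$ for the choice $T_{\Omega,\alpha}=\overline{T}_{\Omega,\alpha}$: the latter is linear, satisfies the size condition (\ref{e1}) with $c_{0}=1$, and is bounded from $L_{p}({\mathbb{R}^{n}})$ to $L_{q}({\mathbb{R}^{n}})$ by Lemma~\ref{Lemma1} (in the form with $p_{1},q_{1}$ needed inside the H\"older splitting of Lemma~\ref{Lemma 5}). Hence Theorem~\ref{teo15} applies verbatim and gives $\Vert[b,\overline{T}_{\Omega,\alpha}]f\Vert_{LM_{q,\varphi_{2}}^{\{x_{0}\}}}\lesssim\Vert b\Vert_{LC_{p_{2},\lambda}^{\{x_{0}\}}}\Vert f\Vert_{LM_{p_{1},\varphi_{1}}^{\{x_{0}\}}}$ under (\ref{47}) when $s^{\prime}\le p$ and under (\ref{48}) when $q_{1}<s$.

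For $M_{\Omega,b,\alpha}$ I would first reduce to a positive fractional-integral commutator, mimicking the proof of Corollary~\ref{Corollary0*}: pulling $|B(x,t)|^{-1+\alpha/n}\approx t^{\alpha-n}$ out of the average and taking $\sup_{t>0}$ yields the pointwise bound
\[
M_{\Omega,b,\alpha}f(x)\le C_{n,\alpha}^{-1}\,\widetilde{T}_{|\Omega|,b,\alpha}(|f|)(x),\qquad \widetilde{T}_{|\Omega|,b,\alpha}(|f|)(x):=\int_{{\mathbb{R}^{n}}}\frac{|\Omega(x-y)|}{|x-y|^{n-\alpha}}\,|b(x)-b(y)|\,|f(y)|\,dy.
\]
Next I would verify that the proof of Lemma~\ref{Lemma 5} carries over to the positive sublinear operator $\widetilde{T}_{|\Omega|,b,\alpha}$: writing $f=f_{1}+f_{2}$ as in (\ref{e39}) and using $|b(x)-b_{B}|\le|b(x)-b_{B(x_{0},t)}|+|b_{B(x_{0},r)}-b_{B(x_{0},t)}|$ for $x\in B=B(x_{0},r)$, one splits $\widetilde{T}_{|\Omega|,b,\alpha}(|f|)$ into four pieces estimated exactly as $J_{1},\dots,J_{4}$ there. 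The only ingredients are the size condition, the $L_{p_{1}}\to L_{q_{1}}$ and $L_{p}\to L_{q}$ boundedness of $\widetilde{T}_{|\Omega|,\alpha}$ (valid as in Lemma~\ref{Lemma1}, since replacing $\Omega$ by $|\Omega|$ does not affect that argument), the H\"older estimates (\ref{e311}) and (\ref{314}) for $\Omega$ on spheres, and the mean-oscillation bounds of Lemma~\ref{Lemma 4}. This produces the local estimate (\ref{200}) — and, for $q_{1}<s$, its stated analogue — with $T_{\Omega,b,\alpha}$ replaced by $\widetilde{T}_{|\Omega|,b,\alpha}$. Then, exactly as in the proof of Theorem~\ref{teo15}, I would apply Theorem~\ref{teo13} with $v_{2}(r)=\varphi_{2}(x_{0},r)^{-1}$, $v_{1}(r)=\varphi_{1}(x_{0},r)^{-1}r^{-n/p_{1}}$, $\omega(t)=t^{n\lambda-n/q_{1}-1}$ (respectively $v_{1}(r)=\varphi_{1}(x_{0},r)^{-1}r^{-n/p_{1}+n/s}$ and $\omega(t)=t^{n\lambda-n/q_{1}+n/s-1}$), so that (\ref{47}) (resp.\ (\ref{48})) becomes precisely the finiteness condition (\ref{e53}); this yields the bound for $\widetilde{T}_{|\Omega|,b,\alpha}$ from $LM_{p_{1},\varphi_{1}}^{\{x_{0}\}}$ to $LM_{q,\varphi_{2}}^{\{x_{0}\}}$ and hence, by the pointwise domination, for $M_{\Omega,b,\alpha}$.

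The only step needing real care is this passage from the linear commutator $T_{\Omega,b,\alpha}$ to the positive majorant $\widetilde{T}_{|\Omega|,b,\alpha}$ of $M_{\Omega,b,\alpha}$: one must check that every estimate of $J_{1},\dots,J_{4}$ in Lemma~\ref{Lemma 5} used only absolute-value (size) bounds, Fubini and Minkowski, H\"older on $S^{n-1}$, and Lemma~\ref{Lemma 4}, and never cancellation of the kernel. An inspection of that proof confirms this, so the reduction is legitimate and the corollary follows; everything else is the already-established machinery of Theorems~\ref{teo13} and~\ref{teo15}.
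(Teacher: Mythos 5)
Your proposal is correct and follows the route the paper intends: the paper offers no separate proof of this corollary, deriving it directly from Theorem~\ref{teo15} exactly as you do for $[b,\overline{T}_{\Omega,\alpha}]$. Your extra step for $M_{\Omega,b,\alpha}$ --- the pointwise domination by the positive majorant $\widetilde{T}_{|\Omega|,b,\alpha}$ together with the check that the estimates of $J_{1},\dots,J_{4}$ in Lemma~\ref{Lemma 5} use only size bounds and hence survive the replacement of $\Omega$ by $|\Omega|$ --- supplies a detail the paper leaves implicit (as in Corollary~\ref{Corollary0*}), and is exactly the right way to handle the fact that the maximal commutator is not itself of the form $T_{\Omega,b,\alpha}$ for a linear $T_{\Omega,\alpha}$.
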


For the sublinear commutator of the fractional maximal operator is defined as follows%

\[
M_{b,\alpha}\left(  f\right)  (x)=\sup_{t>0}|B(x,t)|^{-1+\frac{\alpha}{n}}%
\int \limits_{B(x,t)}\left \vert b\left(  x\right)  -b\left(  y\right)
\right \vert |f(y)|dy
\]
by Theorem \ref{teo15} we get the following new result.

\begin{corollary}
Let $x_{0}\in{\mathbb{R}^{n}}$, $0<\alpha<n$, $1<p<\frac{n}{\alpha}$, $b\in
LC_{p_{2},\lambda}^{\left \{  x_{0}\right \}  }\left(
\mathbb{R}
^{n}\right)  $, $0\leq \lambda<\frac{1}{n}$, $\frac{1}{p}=\frac{1}{p_{1}}%
+\frac{1}{p_{2}}$, $\frac{1}{q}=\frac{1}{p}-\frac{\alpha}{n}$, $\frac{1}%
{q_{1}}=\frac{1}{p_{1}}-\frac{\alpha}{n}$ and the pair $(\varphi_{1}%
,\varphi_{2})$ satisfies condition (\ref{47}). Then, the operators
$M_{b,\alpha}$ and $[b,\overline{T}_{\alpha}]$ are bounded from $LM_{p_{1}%
,\varphi_{1}}^{\{x_{0}\}}$ to $LM_{q,\varphi_{2}}^{\{x_{0}\}}$.
\end{corollary}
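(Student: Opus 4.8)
The plan is to obtain both statements from Theorem \ref{teo15} by specializing to $\Omega\equiv1$, handling the sublinear operator $M_{b,\alpha}$ through an elementary pointwise majorization. First I would observe that $\Omega\equiv1$ lies in $L_s(S^{n-1})$ with $s=\infty$, so $s'=1\le p$ holds automatically for every $p>1$; hence, among the hypotheses of Theorem \ref{teo15}, only condition (\ref{47}) is relevant. Moreover, when $s=\infty$ the factors $r^{\frac{n}{s}}$ and $t^{\frac{n}{s}}$ equal $1$, so (\ref{48}) collapses to (\ref{47}) and the side condition $q_1<s$ is vacuous; this is precisely why the corollary only requires (\ref{47}).

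For the commutator $[b,\overline{T}_\alpha]$ the argument is then immediate: $\overline{T}_\alpha=\overline{T}_{1,\alpha}$ is linear, satisfies the size condition (\ref{e1}) with $c_0=1$ and $\Omega\equiv1$, and is bounded from $L_p({\mathbb{R}^n})$ to $L_q({\mathbb{R}^n})$ for $1<p<\frac{n}{\alpha}$, $\frac{1}{q}=\frac{1}{p}-\frac{\alpha}{n}$ (Hardy--Littlewood--Sobolev, that is, the $\Omega\equiv1$ case of Lemma \ref{Lemma1}, which also supplies the $L_{p_1}\to L_{q_1}$ bound needed inside Lemma \ref{Lemma 5}). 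Thus every hypothesis of Theorem \ref{teo15} is met with $T_{\Omega,\alpha}=\overline{T}_\alpha$ and $T_{\Omega,b,\alpha}=[b,\overline{T}_\alpha]$, and Theorem \ref{teo15} yields $\Vert[b,\overline{T}_\alpha]f\Vert_{LM_{q,\varphi_2}^{\{x_0\}}}\lesssim\Vert b\Vert_{LC_{p_2,\lambda}^{\{x_0\}}}\Vert f\Vert_{LM_{p_1,\varphi_1}^{\{x_0\}}}$. For the sublinear operator $M_{b,\alpha}$ I would first record the pointwise estimate $M_{b,\alpha}f(x)\le\nu_n^{\frac{\alpha}{n}-1}\int_{{\mathbb{R}^n}}|b(x)-b(y)|\,|x-y|^{\alpha-n}|f(y)|\,dy$, proved exactly as in Corollary \ref{Corollary0*}: on $B(x,t)$ one has $|x-y|<t$, so $|B(x,t)|^{-1+\frac{\alpha}{n}}\int_{B(x,t)}|b(x)-b(y)||f(y)|\,dy\le\nu_n^{\frac{\alpha}{n}-1}\int_{B(x,t)}|b(x)-b(y)|\,|x-y|^{\alpha-n}|f(y)|\,dy$, and then one takes the supremum over $t>0$. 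The right-hand side is exactly the positive quantity that the proof of Lemma \ref{Lemma 5} estimates (with $\Omega\equiv1$), since that proof passes to absolute values throughout and uses only the kernel bound together with the boundedness of the potential $\widetilde{T}_{1,\alpha}g(x)=\int g(y)\,|x-y|^{\alpha-n}dy=\overline{T}_\alpha g(x)$ from $L_p$ to $L_q$ and from $L_{p_1}$ to $L_{q_1}$. Hence the local estimate (\ref{200}) holds for $M_{b,\alpha}f$ with $\Omega\equiv1$, and, arguing as in the proof of Theorem \ref{teo15} (that is, applying Lemma \ref{Lemma 5} and Theorem \ref{teo13} with the same choice of weights as in the proof of Theorem \ref{teo9}), one obtains $\Vert M_{b,\alpha}f\Vert_{LM_{q,\varphi_2}^{\{x_0\}}}\lesssim\Vert b\Vert_{LC_{p_2,\lambda}^{\{x_0\}}}\Vert f\Vert_{LM_{p_1,\varphi_1}^{\{x_0\}}}$.

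The one point requiring care --- and what I would flag as the main obstacle --- is that $M_{b,\alpha}$ is not literally the commutator of a linear operator, so Theorem \ref{teo15} cannot be invoked verbatim for it; the reduction above, namely dominating $M_{b,\alpha}f$ by the positive integral $\int|b(x)-b(y)|\,|x-y|^{\alpha-n}|f(y)|\,dy$ and noting that the estimates of Lemma \ref{Lemma 5} --- which rely solely on the pointwise kernel bound and on the $L_p$--$L_q$ boundedness of the underlying fractional integral --- apply to that positive quantity, is what closes the gap. Everything else is a routine transcription of the arguments already carried out for Theorem \ref{teo15} and Lemma \ref{Lemma 5}, with no new analytic input required.
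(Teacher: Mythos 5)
Your proposal is correct and follows the same route as the paper, which simply derives this corollary from Theorem \ref{teo15} by taking $\Omega\equiv1$ (so $s=\infty$, $s'=1\le p$, and condition (\ref{48}) coincides with (\ref{47})). You in fact supply a detail the paper leaves implicit --- that the sublinear operator $M_{b,\alpha}$ is not literally the commutator of a linear operator and must instead be dominated pointwise by $\nu_n^{\frac{\alpha}{n}-1}\int|b(x)-b(y)|\,|x-y|^{\alpha-n}|f(y)|\,dy$, to which the (absolute-value-based) estimates of Lemma \ref{Lemma 5} apply --- and this is exactly the right way to close that gap.
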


Now using above results, we also obtain the boundedness of the operator
$T_{\Omega,b,\alpha}$ on the generalized vanishing local Morrey spaces
$VLM_{p,\varphi}^{\left \{  x_{0}\right \}  }$.

\begin{theorem}
\label{teo11}(Our main result) Let $x_{0}\in{\mathbb{R}^{n}}$, $\Omega \in
L_{s}(S^{n-1})$, $1<s\leq \infty$, be homogeneous of degree zero. Let
$0<\alpha<n$, $1<p<\frac{n}{\alpha}$, $b\in LC_{p_{2},\lambda}^{\left \{
x_{0}\right \}  }\left(
\mathbb{R}
^{n}\right)  $, $0\leq \lambda<\frac{1}{n}$, $\frac{1}{p}=\frac{1}{p_{1}}%
+\frac{1}{p_{2}}$, $\frac{1}{q}=\frac{1}{p}-\frac{\alpha}{n}$, $\frac{1}%
{q_{1}}=\frac{1}{p_{1}}-\frac{\alpha}{n}$, and $T_{\Omega,\alpha}$ is a linear
operator satisfying condition (\ref{e1}) and bounded from $L_{p}%
({\mathbb{R}^{n}})$ to $L_{q}({\mathbb{R}^{n}})$. Let for $s^{\prime}\leq p$
the pair $(\varphi_{1},\varphi_{2})$ satisfies conditions (\ref{2})-(\ref{3})
and%
\begin{equation}
\int \limits_{r}^{\infty}\left(  1+\ln \frac{t}{r}\right)  \varphi_{1}\left(
x_{0},t\right)  \frac{t^{\frac{n}{p_{1}}}}{t^{\frac{n}{q_{1}}+1-n\lambda}%
}dt\leq C_{0}\varphi_{2}\left(  x_{0},r\right)  ,\label{7*}%
\end{equation}
where $C_{0}$ does not depend on $r>0$,%
\begin{equation}
\lim_{r\rightarrow0}\frac{\ln \frac{1}{r}}{\varphi_{2}(x_{0},r)}=0\label{4.7}%
\end{equation}
and
\begin{equation}
c_{\delta}:=%
{\displaystyle \int \limits_{\delta}^{\infty}}
\left(  1+\ln \left \vert t\right \vert \right)  \varphi_{1}\left(
x_{0},t\right)  \frac{t^{\frac{n}{p_{1}}}}{t^{\frac{n}{q_{1}}+1-n\lambda}%
}dt<\infty \label{6*}%
\end{equation}
for every $\delta>0$, and for $q_{1}<s$ the pair $(\varphi_{1},\varphi_{2})$
satisfies conditions (\ref{2})-(\ref{3}) and also%
\begin{equation}
\int \limits_{r}^{\infty}\left(  1+\ln \frac{t}{r}\right)  \varphi_{1}\left(
x_{0},t\right)  \frac{t^{\frac{n}{p_{1}}}}{t^{\frac{n}{q_{1}}-\frac{n}%
{s}+1-n\lambda}}dt\leq C_{0}\varphi_{2}(x_{0},r)r^{\frac{n}{s}},\label{9*}%
\end{equation}
where $C_{0}$ does not depend on $r>0$,%
\[
\lim_{r\rightarrow0}\frac{\ln \frac{1}{r}}{\varphi_{2}(x_{0},r)}=0
\]
and%
\begin{equation}
c_{\delta^{\prime}}:=%
{\displaystyle \int \limits_{\delta^{\prime}}^{\infty}}
\left(  1+\ln \left \vert t\right \vert \right)  \varphi_{1}\left(
x_{0},t\right)  \frac{t^{\frac{n}{p_{1}}}}{t^{\frac{n}{q_{1}}-\frac{n}%
{s}+1-n\lambda}}dt<\infty \label{8*}%
\end{equation}
for every $\delta^{\prime}>0$.

Then the operator $T_{\Omega,b,\alpha}$ is bounded from $VLM_{p_{1}%
,\varphi_{1}}^{\left \{  x_{0}\right \}  }$ to$VLM_{q,\varphi_{2}}^{\left \{
x_{0}\right \}  }$. Moreover,%
\begin{equation}
\left \Vert T_{\Omega,b,\alpha}f\right \Vert _{VLM_{q,\varphi_{2}}^{\left \{
x_{0}\right \}  }}\lesssim \left \Vert b\right \Vert _{LC_{p_{2},\lambda
}^{\left \{  x_{0}\right \}  }}\left \Vert f\right \Vert _{VLM_{p_{1},\varphi_{1}%
}^{\left \{  x_{0}\right \}  }}.\label{10*}%
\end{equation}

\end{theorem}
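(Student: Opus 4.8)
The plan is to follow the same two-part strategy as in the proof of Theorem \ref{teo10}. The norm estimate \eqref{10*} is already contained in Theorem \ref{teo15}, since conditions \eqref{7*} and \eqref{9*} are exactly the hypotheses \eqref{47}, \eqref{48} of that theorem (the essential-infimum form being implied by the plain integral form here); so it suffices to prove the vanishing property, namely that
\[
\lim_{r\to0}\mathfrak{M}_{p_{1},\varphi_{1}}\!\left(f;x_{0},r\right)=0
\quad\text{implies}\quad
\lim_{r\to0}\mathfrak{M}_{q,\varphi_{2}}\!\left(T_{\Omega,b,\alpha}f;x_{0},r\right)=0.
\]
First I would invoke Lemma \ref{Lemma 5} to bound $r^{-n/q}\Vert T_{\Omega,b,\alpha}f\Vert_{L_{q}(B(x_{0},r))}/\varphi_{2}(x_{0},r)$, up to the constant $\Vert b\Vert_{LC_{p_{2},\lambda}^{\{x_{0}\}}}$, by the weighted integral
\[
\frac{1}{\varphi_{2}(x_{0},r)}\int_{2r}^{\infty}\Bigl(1+\ln\tfrac{t}{r}\Bigr)\,t^{n\lambda-\frac{n}{q_{1}}-1}\,\Vert f\Vert_{L_{p_{1}}(B(x_{0},t))}\,dt,
\]
and then split the integral at a fixed $\delta_{0}\in(0,1)$ into $I_{\delta_{0}}(x_{0},r)$ (over $(2r,\delta_{0})$) and $J_{\delta_{0}}(x_{0},r)$ (over $(\delta_{0},\infty)$), exactly as in \eqref{13}.

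For the near part $I_{\delta_{0}}$, I would use that $f\in VLM_{p_{1},\varphi_{1}}^{\{x_{0}\}}$ to pick $\delta_{0}$ so small that $t^{-n/p_{1}}\Vert f\Vert_{L_{p_{1}}(B(x_{0},t))}/\varphi_{1}(x_{0},t)<\varepsilon/(2CC_{0})$ for $t\le\delta_{0}$; substituting this bound and using condition \eqref{7*} (which controls $\int_{r}^{\infty}(1+\ln\frac{t}{r})\varphi_{1}(x_{0},t)t^{n/p_{1}}t^{-n/q_{1}-1+n\lambda}\,dt$ by $C_{0}\varphi_{2}(x_{0},r)$) gives $CI_{\delta_{0}}(x_{0},r)<\varepsilon/2$ uniformly for $0<r<\delta_{0}$. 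For the far part $J_{\delta_{0}}$, I would estimate $\Vert f\Vert_{L_{p_{1}}(B(x_{0},t))}\le \Vert f\Vert_{LM_{p_{1},\varphi_{1}}^{\{x_{0}\}}}\,t^{n/p_{1}}\varphi_{1}(x_{0},t)$ and pull out the $r$-dependence of $(1+\ln\frac{t}{r})\le (1+\ln\frac{t}{\delta_{0}})(1+\ln\frac{1}{r})$ for $r<\delta_{0}<1$, so that
\[
J_{\delta_{0}}(x_{0},r)\le c_{\delta_{0}}\,\frac{1+\ln\frac{1}{r}}{\varphi_{2}(x_{0},r)}\,\Vert f\Vert_{LM_{p_{1},\varphi_{1}}^{\{x_{0}\}}},
\]
with $c_{\delta_{0}}$ finite by \eqref{6*}. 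Here the role of the extra hypothesis \eqref{4.7}, $\lim_{r\to0}(\ln\frac1r)/\varphi_{2}(x_{0},r)=0$, is precisely to force $J_{\delta_{0}}(x_{0},r)<\varepsilon/(2C)$ for $r$ small enough — this is the one new ingredient compared with Theorem \ref{teo10}, and it is exactly what the logarithmic weight in the commutator estimate demands.

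The case $q_{1}<s$ is handled identically, replacing the exponent $n\lambda-\frac{n}{q_{1}}-1$ by $n\lambda-\frac{n}{q_{1}}+\frac{n}{s}-1$, using the second bound in Lemma \ref{Lemma 5}, condition \eqref{9*} for the near part, and \eqref{8*} together with $\lim_{r\to0}(\ln\frac1r)/\varphi_{2}(x_{0},r)=0$ for the far part; I would simply state this and omit the repetition. I do not expect any genuine obstacle here: the only point requiring care is bookkeeping the logarithmic factor $(1+\ln\frac{t}{r})$ when separating the $r$-dependence in $J_{\delta_{0}}$ — one must be slightly careful that $\delta_{0}<1$ so that $\ln\frac{t}{r}=\ln\frac{t}{\delta_0}+\ln\frac{\delta_0}{r}\le\ln\frac{t}{\delta_0}+\ln\frac1r$ with both terms nonnegative on the relevant ranges — but this is routine, and once \eqref{4.7} is available the limit follows immediately.
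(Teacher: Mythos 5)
Your proposal is correct and follows essentially the same route as the paper's proof: the norm inequality is delegated to Theorem \ref{teo15}, the vanishing property is obtained by splitting the integral in Lemma \ref{Lemma 5} at a fixed $\delta_{0}$, the near part is controlled via \eqref{7*} and the $VLM$-hypothesis on $f$, and the far part is controlled by separating the $r$-dependence of the logarithm and invoking \eqref{6*}, \eqref{2} and \eqref{4.7}. The only cosmetic difference is that you bound $1+\ln\frac{t}{r}$ by $(1+\ln\frac{t}{\delta_{0}})(1+\ln\frac{1}{r})$ whereas the paper uses $1+|\ln t|+\ln\frac{1}{r}$; both lead to the same conclusion.
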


\begin{proof}
The norm inequality having already been provided by Theorem \ref{teo15}, we
only have to prove the implication%
\begin{equation}
\lim \limits_{r\rightarrow0}\frac{r^{-\frac{n}{p_{1}}}\Vert f\Vert_{L_{p_{1}%
}(B(x_{0},r))}}{\varphi_{1}(x_{0},r)}=0\text{ implies }\lim
\limits_{r\rightarrow0}\frac{r^{-\frac{n}{q}}\left \Vert T_{\Omega,b,\alpha
}f\right \Vert _{L_{q}\left(  B\left(  x_{0},r\right)  \right)  }}{\varphi
_{2}(x_{0},r)}=0.\label{12*}%
\end{equation}

To show that%
\[
\frac{r^{-\frac{n}{q}}\left \Vert T_{\Omega,b,\alpha}f\right \Vert
_{L_{q}\left(  B\left(  x_{0},r\right)  \right)  }}{\varphi_{2}(x_{0}%
,r)}<\epsilon \text{ for small }r,
\]
we use the estimate (\ref{200}):%
\[
\frac{r^{-\frac{n}{q}}\left \Vert T_{\Omega,b,\alpha}f\right \Vert
_{L_{q}\left(  B\left(  x_{0},r\right)  \right)  }}{\varphi_{2}(x_{0}%
,r)}\lesssim \frac{\Vert b\Vert_{LC_{p_{2},\lambda}^{\{x_{0}\}}}}{\varphi
_{2}(x_{0},r)}\,
{\displaystyle \int \limits_{r}^{\infty}}
\left(  1+\ln \frac{t}{r}\right)  t^{n\lambda-\frac{n}{q_{1}}-1}\Vert
f\Vert_{L_{p_{1}}(B(x_{0},t))}dt.
\]

We take $r<\delta_{0}$, where $\delta_{0}$ will be chosen small enough and
split the integration:%
\begin{equation}
\frac{r^{-\frac{n}{q}}\left \Vert T_{\Omega,b,\alpha}f\right \Vert
_{L_{q}\left(  B\left(  x_{0},r\right)  \right)  }}{\varphi_{2}(x_{0},r)}\leq
C\left[  I_{\delta_{0}}\left(  x_{0},r\right)  +J_{\delta_{0}}\left(
x_{0},r\right)  \right]  ,\label{13*}%
\end{equation}
where $\delta_{0}>0$ (we may take $\delta_{0}<1$), and
\[
I_{\delta_{0}}\left(  x_{0},r\right)  :=\frac{1}{\varphi_{2}(x_{0},r)}%
{\displaystyle \int \limits_{r}^{\delta_{0}}}
\left(  1+\ln \frac{t}{r}\right)  t^{n\lambda-\frac{n}{q_{1}}-1}\left \Vert
f\right \Vert _{L_{p_{1}}\left(  B\left(  x_{0},t\right)  \right)  }dt,
\]
and%
\[
J_{\delta_{0}}\left(  x_{0},r\right)  :=\frac{1}{\varphi_{2}(x_{0},r)}%
{\displaystyle \int \limits_{\delta_{0}}^{\infty}}
\left(  1+\ln \frac{t}{r}\right)  t^{n\lambda-\frac{n}{q_{1}}-1}\left \Vert
f\right \Vert _{L_{p_{1}}\left(  B\left(  x_{0},t\right)  \right)  }dt
\]
and $r<\delta_{0}$. Now we choose any fixed $\delta_{0}>0$ such that%
\[
\frac{t^{-\frac{n}{p_{1}}}\left \Vert f\right \Vert _{L_{p_{1}}\left(  B\left(
x_{0},t\right)  \right)  }}{\varphi_{1}(x_{0},t)}<\frac{\epsilon}{2CC_{0}%
},\qquad t\leq \delta_{0},
\]
where $C$ and $C_{0}$ are constants from (\ref{7*}) and (\ref{13*}). This
allows to estimate the first term uniformly in $r\in \left(  0,\delta
_{0}\right)  $:%
\[
CI_{\delta_{0}}\left(  x_{0},r\right)  <\frac{\epsilon}{2},\qquad
0<r<\delta_{0}.
\]

For the second term, writing $1+\ln \frac{t}{r}\leq1+\left \vert \ln
t\right \vert +\ln \frac{1}{r}$, we obtain%
\[
J_{\delta_{0}}\left(  x_{0},r\right)  \leq \frac{c_{\delta_{0}}+\widetilde
{c_{\delta_{0}}}\ln \frac{1}{r}}{\varphi_{2}(x_{0},r)}\left \Vert f\right \Vert
_{LM_{p_{1},\varphi_{1}}^{\left \{  x_{0}\right \}  }},
\]
where $c_{\delta_{0}}$ is the constant from (\ref{6*}) with $\delta=\delta
_{0}$ and $\widetilde{c_{\delta_{0}}}$ is a similar constant with omitted
logarithmic factor in the integrand. Then, by (\ref{4.7}) we can choose small
enough $r$ such that%
\[
J_{\delta_{0}}\left(  x_{0},r\right)  <\frac{\epsilon}{2},
\]
which completes the proof of (\ref{12*}).

For the case of $q_{1}<s$, we can also use the same method, so we omit the details.
\end{proof}

\begin{remark}
Conditions (\ref{6*}) and (\ref{8*}) are not needed in the case when
$\varphi(x_{0},r)$ does not depend on $x_{0}$, since (\ref{6*}) follows from
(\ref{7*}) and similarly, (\ref{8*}) follows from (\ref{9*}) in this case.
\end{remark}

\begin{corollary}
Suppose that $x_{0}\in{\mathbb{R}^{n}}$, $\Omega \in L_{s}(S^{n-1})$,
$1<s\leq \infty$, is homogeneous of degree zero. Let $0<\alpha<n$,
$1<p<\frac{n}{\alpha}$, $b\in LC_{p_{2},\lambda}^{\left \{  x_{0}\right \}
}\left(
\mathbb{R}
^{n}\right)  $, $0\leq \lambda<\frac{1}{n}$, $\frac{1}{p}=\frac{1}{p_{1}}%
+\frac{1}{p_{2}}$, $\frac{1}{q}=\frac{1}{p}-\frac{\alpha}{n}$, $\frac{1}%
{q_{1}}=\frac{1}{p_{1}}-\frac{\alpha}{n}$. If for $s^{\prime}\leq p$ the pair
$\left(  \varphi_{1},\varphi_{2}\right)  $ satisfies conditions (\ref{2}%
)-(\ref{3})-(\ref{4.7}) and (\ref{6*})-(\ref{7*}) and for $q_{1}<s$ the pair
$\left(  \varphi_{1},\varphi_{2}\right)  $ satisfies conditions (\ref{2}%
)-(\ref{3})-(\ref{4.7}) and (\ref{8*})-(\ref{9*}). Then, the operators
$M_{\Omega,b,\alpha}$ and $[b,\overline{T}_{\Omega,\alpha}]$ are bounded from
$VLM_{p_{1},\varphi_{1}}^{\left \{  x_{0}\right \}  }$ to$VLM_{q,\varphi_{2}%
}^{\left \{  x_{0}\right \}  }$.
\end{corollary}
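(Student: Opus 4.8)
The plan is to follow the scheme of the proof of Theorem~\ref{teo10}, with the local estimate of Lemma~\ref{lemma2} replaced by the commutator estimate~(\ref{200}) of Lemma~\ref{Lemma 5} and Theorem~\ref{teo5} replaced by Theorem~\ref{teo13}. The norm inequality~(\ref{10*}) is already supplied by Theorem~\ref{teo15}, so the entire task reduces to proving the implication~(\ref{12*}), namely that
\[
\lim_{r\rightarrow0}\frac{r^{-\frac{n}{p_{1}}}\Vert f\Vert_{L_{p_{1}}(B(x_{0},r))}}{\varphi_{1}(x_{0},r)}=0
\quad\Longrightarrow\quad
\lim_{r\rightarrow0}\frac{r^{-\frac{n}{q}}\Vert T_{\Omega,b,\alpha}f\Vert_{L_{q}(B(x_{0},r))}}{\varphi_{2}(x_{0},r)}=0 .
\]
First I would fix $\epsilon>0$, choose a small $\delta_{0}<1$, insert estimate~(\ref{200}) into the left-hand ratio, and split $\int_{r}^{\infty}=\int_{r}^{\delta_{0}}+\int_{\delta_{0}}^{\infty}$, calling the two resulting pieces $C\,I_{\delta_{0}}(x_{0},r)$ and $C\,J_{\delta_{0}}(x_{0},r)$ as in the proof of Theorem~\ref{teo10}.

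For the inner piece $I_{\delta_{0}}$, I would use the hypothesis $f\in VLM_{p_{1},\varphi_{1}}^{\{x_{0}\}}$ to pick $\delta_{0}$ so small that $t^{-\frac{n}{p_{1}}}\Vert f\Vert_{L_{p_{1}}(B(x_{0},t))}<\frac{\epsilon}{2CC_{0}}\varphi_{1}(x_{0},t)$ for all $t\le\delta_{0}$; substituting this bound and invoking condition~(\ref{7*}) (with the $\operatorname*{essinf}$ bounded below by the pointwise value) yields $C\,I_{\delta_{0}}(x_{0},r)<\epsilon/2$ uniformly in $r\in(0,\delta_{0})$. For the tail piece $J_{\delta_{0}}$, the integration variable $t$ stays away from $0$, so $\Vert f\Vert_{L_{p_{1}}(B(x_{0},t))}$ is controlled by $\Vert f\Vert_{LM_{p_{1},\varphi_{1}}^{\{x_{0}\}}}$; the only delicate point is the factor $1+\ln\frac{t}{r}$, which I would split via $1+\ln\frac{t}{r}\le 1+|\ln t|+\ln\frac1r$ to obtain
\[
J_{\delta_{0}}(x_{0},r)\le\frac{c_{\delta_{0}}+\widetilde{c_{\delta_{0}}}\,\ln\frac1r}{\varphi_{2}(x_{0},r)}\,\Vert f\Vert_{LM_{p_{1},\varphi_{1}}^{\{x_{0}\}}},
\]
where $c_{\delta_{0}}$ is the finite constant in~(\ref{6*}) with $\delta=\delta_{0}$ and $\widetilde{c_{\delta_{0}}}$ the analogous constant with the logarithmic factor omitted. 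Finally, hypothesis~(\ref{4.7}) gives $\frac{1+\ln\frac1r}{\varphi_{2}(x_{0},r)}\rightarrow0$ as $r\rightarrow0$ (in particular $\frac{1}{\varphi_{2}(x_{0},r)}\rightarrow0$, recovering~(\ref{2})), so $r$ may be taken small enough that $J_{\delta_{0}}(x_{0},r)<\epsilon/2$, which closes the argument.

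The case $q_{1}<s$ is treated verbatim, using the second estimate of Lemma~\ref{Lemma 5} together with conditions~(\ref{8*})--(\ref{9*}) in place of~(\ref{6*})--(\ref{7*}), the extra factor $r^{\frac{n}{s}}$ being absorbed exactly as in Theorem~\ref{teo9}. I expect the only genuinely new obstacle, compared with the non-commutator Theorem~\ref{teo10}, to be the control of the logarithmic blow-up $\ln\frac{t}{r}$ in the tail term: the bare decay~(\ref{2}) of $1/\varphi_{2}$ no longer suffices, and the estimate hinges precisely on the strengthened vanishing condition~(\ref{4.7}), $\lim_{r\rightarrow0}\frac{\ln\frac1r}{\varphi_{2}(x_{0},r)}=0$, which is why that hypothesis is imposed in the statement.
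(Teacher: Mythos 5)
Your argument is correct and is essentially the paper's own: the paper obtains this corollary simply by specializing Theorem~\ref{teo11} to $M_{\Omega,b,\alpha}$ and $[b,\overline{T}_{\Omega,\alpha}]$ (both of which satisfy the commutator size condition and the requisite $L_{p}\rightarrow L_{q}$ bounds), and your writeup reproduces the proof of that theorem step for step --- the split at $\delta_{0}$, the uniform bound on $I_{\delta_{0}}$ via the vanishing of $f$ and condition~(\ref{7*}), and the control of $J_{\delta_{0}}$ via $1+\ln\frac{t}{r}\leq1+\left\vert \ln t\right\vert +\ln\frac{1}{r}$, (\ref{6*}) and the strengthened decay~(\ref{4.7}). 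Your closing observation that (\ref{4.7}) is exactly what replaces (\ref{2}) to absorb the logarithmic growth in the tail is precisely the point of the paper's proof.
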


In the case of $q=\infty$ by Theorem \ref{teo11}, we get

\begin{corollary}
Let $x_{0}\in{\mathbb{R}^{n}}$, $0<\alpha<n$, $1<p<\frac{n}{\alpha}$, $b\in
LC_{p_{2},\lambda}^{\left \{  x_{0}\right \}  }\left(
\mathbb{R}
^{n}\right)  $, $0\leq \lambda<\frac{1}{n}$, $\frac{1}{p}=\frac{1}{p_{1}}%
+\frac{1}{p_{2}}$, $\frac{1}{q}=\frac{1}{p}-\frac{\alpha}{n}$, $\frac{1}%
{q_{1}}=\frac{1}{p_{1}}-\frac{\alpha}{n}$ and the pair $(\varphi_{1}%
,\varphi_{2})$ satisfies conditions (\ref{2})-(\ref{3})-(\ref{4.7}) and
(\ref{6*})-(\ref{7*}). Then the operators $M_{b,\alpha}$ and $[b,\overline
{T}_{\alpha}]$ are bounded from $VLM_{p_{1},\varphi_{1}}^{\left \{
x_{0}\right \}  }$ to$VLM_{q,\varphi_{2}}^{\left \{  x_{0}\right \}  }$.
\end{corollary}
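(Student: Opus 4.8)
The plan is to derive this corollary as the special case $\Omega\equiv1$ of the framework built in Theorem \ref{teo11}, so essentially no new analysis is required: one only has to check that $[b,\overline{T}_{\alpha}]$ and $M_{b,\alpha}$ fit the hypotheses of that theorem when $s$ is taken to be $\infty$. With $s=\infty$ we have $\Omega\equiv1\in L_{\infty}(S^{n-1})$, $s^{\prime}=1\leq p$ automatically, and $\frac{n}{s}=0$; consequently the ``$q_{1}<s$'' conditions (\ref{8*})--(\ref{9*}) collapse onto (\ref{6*})--(\ref{7*}), which together with (\ref{2})--(\ref{3}) and (\ref{4.7}) are precisely the assumptions made in the corollary. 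Thus the whole content is the verification that the two operators obey the size bound and the underlying $L_{p}\to L_{q}$ estimate demanded by Theorem \ref{teo11}.

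For $[b,\overline{T}_{\alpha}]$ this is immediate. From its defining formula
\[
[b,\overline{T}_{\alpha}]f(x)=\int\limits_{{\mathbb{R}^{n}}}\frac{b(x)-b(y)}{|x-y|^{n-\alpha}}f(y)\,dy ,
\]
we read off $|[b,\overline{T}_{\alpha}]f(x)|\leq\int_{{\mathbb{R}^{n}}}|b(x)-b(y)|\,\frac{|\Omega(x-y)|}{|x-y|^{n-\alpha}}\,|f(y)|\,dy$ with $\Omega\equiv1$, i.e.\ exactly the commutator size condition underlying Lemma \ref{Lemma 5}; and $\overline{T}_{\alpha}$ is bounded from $L_{p}({\mathbb{R}^{n}})$ to $L_{q}({\mathbb{R}^{n}})$ by the classical Hardy--Littlewood--Sobolev inequality, since $\frac{1}{p}-\frac{1}{q}=\frac{\alpha}{n}$ and $1<p<\frac{n}{\alpha}$. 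Hence Theorem \ref{teo11} applies directly and yields (\ref{10*}) for $[b,\overline{T}_{\alpha}]$.

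For the sublinear maximal commutator $M_{b,\alpha}$ I would reproduce the majorization used in the proof of Corollary \ref{Corollary0*}. For every $t>0$,
\[
\int\limits_{{\mathbb{R}^{n}}}\frac{|b(x)-b(y)|}{|x-y|^{n-\alpha}}|f(y)|\,dy\geq\int\limits_{B(x,t)}\frac{|b(x)-b(y)|}{|x-y|^{n-\alpha}}|f(y)|\,dy\geq\frac{1}{t^{n-\alpha}}\int\limits_{B(x,t)}|b(x)-b(y)|\,|f(y)|\,dy ,
\]
and taking the supremum over $t>0$ together with $|B(x,t)|=v_{n}t^{n}$ gives
\[
M_{b,\alpha}(f)(x)\leq C_{n,\alpha}^{-1}\int\limits_{{\mathbb{R}^{n}}}\frac{|b(x)-b(y)|}{|x-y|^{n-\alpha}}|f(y)|\,dy .
\]
Thus $M_{b,\alpha}$ is pointwise dominated by the positive linear operator $g\mapsto\int\frac{|b(\cdot)-b(y)|}{|\cdot-y|^{n-\alpha}}g(y)\,dy$ evaluated at $|f|$; this majorant satisfies the commutator size bound (with $\Omega\equiv1$) and its un-commutated companion, in the sense of Lemma \ref{Lemma 5}, is $\overline{T}_{\alpha}$, which is $L_{p}\to L_{q}$ and $L_{p_{1}}\to L_{q_{1}}$ bounded. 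Since $\left\Vert |f|\right\Vert_{VLM_{p_{1},\varphi_{1}}^{\{x_{0}\}}}=\left\Vert f\right\Vert_{VLM_{p_{1},\varphi_{1}}^{\{x_{0}\}}}$, the conclusion of Theorem \ref{teo11} transfers to $M_{b,\alpha}$, giving (\ref{10*}).

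The only point that needs a little care — the would-be main obstacle — is the passage from the sublinear $M_{b,\alpha}$ to the linear hypothesis of Theorem \ref{teo11}: one must observe that neither Lemma \ref{Lemma 5} nor the proof of Theorem \ref{teo11} uses linearity beyond what the pointwise size estimate already provides (the decomposition $f=f_{1}+f_{2}$ and the four-term splitting $J_{1},\dots,J_{4}$ are applied to the nonnegative majorant, not to $M_{b,\alpha}$ itself), and to keep the exponent bookkeeping $\frac{1}{p}=\frac{1}{p_{1}}+\frac{1}{p_{2}}$, $\frac{1}{q}=\frac{1}{p}-\frac{\alpha}{n}$, $\frac{1}{q_{1}}=\frac{1}{p_{1}}-\frac{\alpha}{n}$, with $b\in LC_{p_{2},\lambda}^{\{x_{0}\}}$ and $0\leq\lambda<\frac{1}{n}$, intact throughout. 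Once this is in place, the structural hypotheses (\ref{2}), (\ref{3}), (\ref{4.7}), (\ref{6*}), (\ref{7*}) are exactly those required by Theorem \ref{teo11} in the relevant range, and the corollary follows.
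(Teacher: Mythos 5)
Your proposal is correct and is essentially the paper's intended argument: the corollary is exactly the specialization of Theorem \ref{teo11} to $\Omega\equiv1$ (i.e.\ $s=\infty$, so $s'=1\leq p$, $\tfrac{n}{s}=0$, and conditions (\ref{8*})--(\ref{9*}) coincide with (\ref{6*})--(\ref{7*})), the paper itself offering no further proof beyond this reduction. Your additional justification for the sublinear operator $M_{b,\alpha}$ via the pointwise domination by the positive integral operator $\int|b(x)-b(y)|\,|x-y|^{\alpha-n}|f(y)|\,dy$ mirrors the paper's own treatment of $M_{\Omega,\alpha}$ in Corollary \ref{Corollary0*} and correctly fills a detail the paper leaves implicit.
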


\section{some applications}

In this section, we give the applications of Theorem \ref{teo9}, Theorem
\ref{teo10}, Theorem \ref{teo15}, Theorem \ref{teo11} for the Marcinkiewicz operator.

\subsection{Marcinkiewicz Operator}

Let $S^{n-1}=\{x\in{\mathbb{R}^{n}}:|x|=1\}$ be the unit sphere in
${\mathbb{R}^{n}}$ equipped with the Lebesgue measure $d\sigma$. Suppose that
$\Omega$ satisfies the following conditions.

(a) $\Omega$ is the homogeneous function of degree zero on ${\mathbb{R}^{n}%
}\setminus \{0\}$, that is,
\[
\Omega(\mu x)=\Omega(x),~~\text{for any}~~ \mu>0,x\in{\mathbb{R}^{n}}%
\setminus \{0\}.
\]

(b) $\Omega$ has mean zero on $S^{n-1}$, that is,
\[
\int \limits_{S^{n-1}}\Omega(x^{\prime})d\sigma(x^{\prime})=0,
\]
where $x^{\prime}=\frac{x}{\left \vert x\right \vert }$ for any $x\neq0$.

(c) $\Omega \in Lip_{\gamma}(S^{n-1})$, $0<\gamma \leq1$, that is there exists a
constant $M>0$ such that,
\[
|\Omega(x^{\prime})-\Omega(y^{\prime})|\leq M|x^{\prime}-y^{\prime}|^{\gamma
}~~\text{for any}~~x^{\prime},y^{\prime}\in S^{n-1}.
\]

In 1958, Stein \cite{Stein58} defined the Marcinkiewicz integral of higher
dimension $\mu_{\Omega}$ as
\[
\mu_{\Omega}(f)(x)=\left(  \int \limits_{0}^{\infty}|F_{\Omega,t}%
(f)(x)|^{2}\frac{dt}{t^{3}}\right)  ^{1/2},
\]
where
\[
F_{\Omega,t}(f)(x)=\int \limits_{|x-y|\leq t}\frac{\Omega(x-y)}{|x-y|^{n-1}%
}f(y)dy.
\]

Since Stein's work in 1958, the continuity of Marcinkiewicz integral has been
extensively studied as a research topic and also provides useful tools in
harmonic analysis \cite{LuDingY, St, Stein93, Torch}.

The Marcinkiewicz operator is defined by (see \cite{TorWang})
\[
\mu_{\Omega,\alpha}(f)(x)=\left(  \int \limits_{0}^{\infty}|F_{\Omega,\alpha
,t}(f)(x)|^{2}\frac{dt}{t^{3}}\right)  ^{1/2},
\]
where
\[
F_{\Omega,\alpha,t}(f)(x)=\int \limits_{|x-y|\leq t}\frac{\Omega(x-y)}%
{|x-y|^{n-1-\alpha}}f(y)dy.
\]

Note that $\mu_{\Omega}f=\mu_{\Omega,0}f$.

The sublinear commutator of the operator $\mu_{\Omega,\alpha}$ is defined by
\[
\lbrack b,\mu_{\Omega,\alpha}](f)(x)=\left(
{\displaystyle \int \limits_{0}^{\infty}}
|F_{\Omega,\alpha,t,b}(f)(x)|^{2}\frac{dt}{t^{3}}\right)  ^{1/2},
\]
where
\[
F_{\Omega,\alpha,t,b}(f)(x)=%
{\displaystyle \int \limits_{|x-y|\leq t}}
\frac{\Omega(x-y)}{|x-y|^{n-1-\alpha}}[b(x)-b(y)]f(y)dy.
\]

We consider the space $H=\{h:\Vert h\Vert=(\int \limits_{0}^{\infty}%
|h(t)|^{2}\frac{dt}{t^{3}})^{1/2}<\infty \}$. Then, it is clear that
$\mu_{\Omega,\alpha}(f)(x)=\Vert F_{\Omega,\alpha,t}(x)\Vert$.

By the Minkowski inequality, we get
\[
\mu_{\Omega,\alpha}(f)(x)\leq \int \limits_{{\mathbb{R}^{n}}}\frac
{|\Omega(x-y)|}{|x-y|^{n-1-\alpha}}|f(y)|\left(  \int \limits_{|x-y|}^{\infty
}\frac{dt}{t^{3}}\right)  ^{1/2}dy\leq C\int \limits_{{\mathbb{R}^{n}}}%
\frac{\left \vert \Omega(x-y)\right \vert }{|x-y|^{n-\alpha}}|f(y)|dy.
\]
Thus, $\mu_{\Omega,\alpha}$ satisfies the condition (\ref{e1}). It is known
that $\mu_{\Omega,\alpha}$ is bounded from $L_{p}({\mathbb{R}^{n}})$ to
$L_{q}({\mathbb{R}^{n}})$ for $p>1$, and bounded from $L_{1}({\mathbb{R}^{n}%
})$ to $WL_{q}({\mathbb{R}^{n}})$ for $p=1$ (see \cite{TorWang}), then by
Theorems \ref{teo9}, \ref{teo10}, \ref{teo15} and \ref{teo11} we get

\begin{corollary}
Suppose that $x_{0}\in{\mathbb{R}^{n}}$, $\Omega \in L_{s}(S^{n-1})$,
$1<s\leq \infty$, is homogeneous of degree zero. Let $0<\alpha<n$, $1\leq
p<\frac{n}{\alpha}$ and $\frac{1}{q}=\frac{1}{p}-\frac{\alpha}{n}$. Let also,
for $s^{\prime}\leq p$, $p\neq1$, the pair $\left(  \varphi_{1},\varphi
_{2}\right)  $ satisfies condition (\ref{316}) and for $q<s$ the pair $\left(
\varphi_{1},\varphi_{2}\right)  $ satisfies condition (\ref{317}) and $\Omega$
satisfies conditions (a)--(c). Then the operator $\mu_{\Omega,\alpha}$ is
bounded from $LM_{p,\varphi_{1}}^{\{x_{0}\}}$ to $LM_{q,\varphi_{2}}%
^{\{x_{0}\}}$ for $p>1$ and from $LM_{1,\varphi_{1}}^{\{x_{0}\}}$ to
$WLM_{q,\varphi_{2}}^{\{x_{0}\}}$ for $p=1$.
\end{corollary}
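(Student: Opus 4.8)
The plan is to obtain this corollary as a direct specialization of Theorem \ref{teo9}, by checking that the Marcinkiewicz operator $\mu_{\Omega,\alpha}$ satisfies all the hypotheses that theorem imposes on a generic sublinear operator $T_{\Omega,\alpha}$. First I would verify the size condition (\ref{e1}). Writing $\mu_{\Omega,\alpha}(f)(x)=\Vert F_{\Omega,\alpha,t}(f)(x)\Vert$ with the space $H=\{h:\Vert h\Vert=(\int_{0}^{\infty}|h(t)|^{2}\,dt/t^{3})^{1/2}<\infty\}$ and applying the Minkowski integral inequality in $H$, one gets
\[
\mu_{\Omega,\alpha}(f)(x)\leq \int \limits_{{\mathbb{R}^{n}}}\frac{|\Omega(x-y)|}{|x-y|^{n-1-\alpha}}|f(y)|\left(\int \limits_{|x-y|}^{\infty}\frac{dt}{t^{3}}\right)^{1/2}dy\leq C\int \limits_{{\mathbb{R}^{n}}}\frac{|\Omega(x-y)|}{|x-y|^{n-\alpha}}|f(y)|\,dy,
\]
which is exactly (\ref{e1}) with $c_{0}=C$; sublinearity of $\mu_{\Omega,\alpha}$ is clear from the same representation.

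Next I would record that the regularity assumptions (a)--(c) are in fact stronger than what is needed: $\Omega\in Lip_{\gamma}(S^{n-1})$ forces $\Omega\in L_{\infty}(S^{n-1})\subset L_{s}(S^{n-1})$ for every $1<s\leq\infty$. Hence in the regime $s^{\prime}\leq p$ one may simply take $s=\infty$ (so $s^{\prime}=1\leq p$ holds for all $p\geq1$), and in the regime $q<s$ one may take $s$ as large as needed; in either case the conditions on $\Omega$ and $s$ demanded by Theorem \ref{teo9} are met. Finally, the $L_{p}({\mathbb{R}^{n}})\to L_{q}({\mathbb{R}^{n}})$ boundedness of $\mu_{\Omega,\alpha}$ for $p>1$ and the $L_{1}({\mathbb{R}^{n}})\to WL_{q}({\mathbb{R}^{n}})$ boundedness for $p=1$, with $\frac1q=\frac1p-\frac{\alpha}{n}$, are classical and may be quoted from \cite{TorWang}; this is precisely the boundedness hypothesis on $T_{\Omega,\alpha}$ in Theorem \ref{teo9}.

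With these three points in place, Theorem \ref{teo9} applies with $T_{\Omega,\alpha}=\mu_{\Omega,\alpha}$: under condition (\ref{316}) when $s^{\prime}\leq p$ (with $p\neq1$) and under condition (\ref{317}) when $q<s$, the operator $\mu_{\Omega,\alpha}$ is bounded from $LM_{p,\varphi_{1}}^{\{x_{0}\}}$ to $LM_{q,\varphi_{2}}^{\{x_{0}\}}$ for $p>1$ and from $LM_{1,\varphi_{1}}^{\{x_{0}\}}$ to $WLM_{q,\varphi_{2}}^{\{x_{0}\}}$ for $p=1$, which is the assertion. I do not expect a genuine obstacle here: the only substantive step is the Minkowski-inequality verification of (\ref{e1}) displayed above, and everything else is bookkeeping; the hypotheses (a)--(c) are retained merely because they are the classical setting in which the $L_{p}$-theory of $\mu_{\Omega,\alpha}$ is stated. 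The same verification simultaneously feeds Theorems \ref{teo10}, \ref{teo15} and \ref{teo11}, giving the vanishing and commutator versions.
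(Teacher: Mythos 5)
Your proposal is correct and follows exactly the route the paper takes: verify the size condition (\ref{e1}) for $\mu_{\Omega,\alpha}$ via the Minkowski integral inequality (the same chain of inequalities appears verbatim in the paper's Section 5), quote \cite{TorWang} for the $L_{p}\to L_{q}$ and $L_{1}\to WL_{q}$ boundedness, and then apply Theorem \ref{teo9}. The extra observation that (a)--(c) force $\Omega\in L_{\infty}(S^{n-1})\subset L_{s}(S^{n-1})$ is a harmless addition not made explicit in the paper.
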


\begin{corollary}
Suppose that $x_{0}\in{\mathbb{R}^{n}}$, $\Omega \in L_{s}(S^{n-1})$,
$1<s\leq \infty$, is homogeneous of degree zero. Let $0<\alpha<n$, $1\leq
p<\frac{n}{\alpha}$ and $\frac{1}{q}=\frac{1}{p}-\frac{\alpha}{n}$. Let also,
for $s^{\prime}\leq p$, $p\neq1$, the pair $\left(  \varphi_{1},\varphi
_{2}\right)  $ satisfies conditions (\ref{2})-(\ref{3}) and (\ref{6}%
)-(\ref{7}) and for $q<s$ the pair $\left(  \varphi_{1},\varphi_{2}\right)  $
satisfies conditions (\ref{2})-(\ref{3}) and (\ref{8})-(\ref{9}) and $\Omega$
satisfies conditions (a)--(c). Then the operator $\mu_{\Omega,\alpha}$ is
bounded from $VLM_{p,\varphi_{1}}^{\{x_{0}\}}$ to $VLM_{q,\varphi_{2}%
}^{\{x_{0}\}}$ for $p>1$ and from $VLM_{1,\varphi_{1}}^{\{x_{0}\}}$ to
$WVLM_{q,\varphi_{2}}^{\{x_{0}\}}$ for $p=1$.
\end{corollary}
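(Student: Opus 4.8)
The plan is to obtain this corollary as a direct application of Theorem \ref{teo10}: it suffices to check that the Marcinkiewicz operator $\mu_{\Omega,\alpha}$ is an admissible instance of the sublinear operator $T_{\Omega,\alpha}$ appearing there, i.e. that (i) its rough kernel satisfies $\Omega\in L_{s}(S^{n-1})$ for the relevant $s$, (ii) $\mu_{\Omega,\alpha}$ is sublinear and obeys the size estimate (\ref{e1}), and (iii) $\mu_{\Omega,\alpha}$ enjoys the required $L_{p}$--$L_{q}$ mapping properties. Once these three points are in place, the conclusion is literally the statement of Theorem \ref{teo10} with $T_{\Omega,\alpha}=\mu_{\Omega,\alpha}$, because the pair $(\varphi_{1},\varphi_{2})$ is assumed in the corollary to satisfy exactly the hypotheses (\ref{2})--(\ref{3}) together with (\ref{6})--(\ref{7}) (when $s^{\prime}\le p$, $p\neq1$) and (\ref{2})--(\ref{3}) together with (\ref{8})--(\ref{9}) (when $q<s$) that Theorem \ref{teo10} demands.

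For (i), conditions (a)--(c) give $\Omega\in Lip_{\gamma}(S^{n-1})\subset L_{\infty}(S^{n-1})\subset L_{s}(S^{n-1})$ for every $s$ with $1<s\le\infty$; in particular one may work with $s=\infty$, so that $s^{\prime}=1\le p$ is automatic for all $p\ge1$, and since $q=\tfrac{np}{n-\alpha p}<\infty$ the complementary condition $q<s$ is also available, covering in particular the endpoint case $p=1$. For (ii), as already noted in the discussion preceding the corollary, writing $\mu_{\Omega,\alpha}(f)(x)=\|F_{\Omega,\alpha,t}(f)(x)\|_{H}$ with $H=L^{2}((0,\infty),t^{-3}\,dt)$ and applying Minkowski's integral inequality yields
\[
\mu_{\Omega,\alpha}(f)(x)\le\int_{\mathbb{R}^{n}}\frac{|\Omega(x-y)|}{|x-y|^{n-1-\alpha}}|f(y)|\left(\int_{|x-y|}^{\infty}\frac{dt}{t^{3}}\right)^{1/2}dy\le C\int_{\mathbb{R}^{n}}\frac{|\Omega(x-y)|}{|x-y|^{n-\alpha}}|f(y)|\,dy,
\]
so that $\mu_{\Omega,\alpha}$ satisfies (\ref{e1}); sublinearity is clear, being an $H$-valued square function. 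For (iii), by \cite{TorWang}, under hypotheses (a)--(c) the operator $\mu_{\Omega,\alpha}$ is bounded from $L_{p}(\mathbb{R}^{n})$ to $L_{q}(\mathbb{R}^{n})$ for $1<p<\tfrac{n}{\alpha}$ with $\tfrac{1}{q}=\tfrac{1}{p}-\tfrac{\alpha}{n}$, and from $L_{1}(\mathbb{R}^{n})$ to $WL_{q}(\mathbb{R}^{n})$ for $p=1$.

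Combining (i)--(iii), $\mu_{\Omega,\alpha}$ meets all the hypotheses imposed on $T_{\Omega,\alpha}$ in Theorem \ref{teo10} (these having been funnelled through Lemma \ref{lemma2} and Theorem \ref{teo9}). Invoking Theorem \ref{teo10} then gives the boundedness of $\mu_{\Omega,\alpha}$ from $VLM_{p,\varphi_{1}}^{\{x_{0}\}}$ to $VLM_{q,\varphi_{2}}^{\{x_{0}\}}$ for $p>1$, and from $VLM_{1,\varphi_{1}}^{\{x_{0}\}}$ to $WVLM_{q,\varphi_{2}}^{\{x_{0}\}}$ for $p=1$, which is the assertion. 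Since every step is an immediate verification of hypotheses, there is no substantive obstacle; the only point requiring a little care is the matching of the boundedness hypothesis of Theorem \ref{teo10}, namely ensuring one cites the $L_{p}$--$L_{q}$ (not merely $L_{p}$--$L_{p}$) estimate for $\mu_{\Omega,\alpha}$ from \cite{TorWang}, and that for $p=1$ the weak-type $(1,q)$ bound with $q<s=\infty$ is exactly what Lemma \ref{lemma2} uses.
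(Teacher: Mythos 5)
Your proposal is correct and follows essentially the same route as the paper: verify the size condition (\ref{e1}) for $\mu_{\Omega,\alpha}$ via Minkowski's integral inequality, invoke the $L_{p}\to L_{q}$ and weak $(1,q)$ bounds from \cite{TorWang}, and then apply Theorem \ref{teo10} with the pair $(\varphi_{1},\varphi_{2})$ satisfying the stated conditions. Your additional remarks (that (a)--(c) give $\Omega\in L_{\infty}(S^{n-1})\subset L_{s}(S^{n-1})$, and the care in citing the $L_{p}$--$L_{q}$ rather than $L_{p}$--$L_{p}$ estimate) are sensible but do not change the argument.
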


\begin{corollary}
Suppose that $x_{0}\in{\mathbb{R}^{n}}$, $\Omega \in L_{s}(S^{n-1})$,
$1<s\leq \infty$, is homogeneous of degree zero. Let $0<\alpha<n$,
$1<p<\frac{n}{\alpha}$, $b\in LC_{p_{2},\lambda}^{\left \{  x_{0}\right \}
}\left(
\mathbb{R}
^{n}\right)  $, $0\leq \lambda<\frac{1}{n}$, $\frac{1}{p}=\frac{1}{p_{1}}%
+\frac{1}{p_{2}}$, $\frac{1}{q}=\frac{1}{p}-\frac{\alpha}{n}$, $\frac{1}%
{q_{1}}=\frac{1}{p_{1}}-\frac{\alpha}{n}$. Let also, for $s^{\prime}\leq p$
the pair $(\varphi_{1},\varphi_{2})$ satisfies condition (\ref{47}) and for
$q_{1}<s$ the pair $(\varphi_{1},\varphi_{2})$ satisfies condition (\ref{48})
and $\Omega$ satisfies conditions (a)--(c). Then, the operator $[b,\mu
_{\Omega,\alpha}]$ is bounded from $LM_{p_{1},\varphi_{1}}^{\{x_{0}\}}$ to
$LM_{q,\varphi_{2}}^{\{x_{0}\}}$.
\end{corollary}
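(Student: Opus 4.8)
The plan is to derive this as an application of Theorem \ref{teo15}, after checking that $[b,\mu_{\Omega,\alpha}]$ is, up to a harmless vector-valued reformulation, a commutator $T_{\Omega,b,\alpha}$ generated by an operator of type (\ref{e1}). The one point requiring care is that $\mu_{\Omega,\alpha}$ is sublinear, whereas Theorem \ref{teo15} and Lemma \ref{Lemma 5} are stated for a \emph{linear} $T_{\Omega,\alpha}$, and the decomposition $T_{\Omega,b,\alpha}f=J_1+J_2+J_3+J_4$ used there genuinely exploits linearity. I would remove this by linearizing: set $H=\bigl\{h:\Vert h\Vert_H=(\int_0^\infty|h(t)|^2t^{-3}\,dt)^{1/2}<\infty\bigr\}$ and let $Uf(x):=\bigl(F_{\Omega,\alpha,t}(f)(x)\bigr)_{t>0}$, an $H$-valued linear operator with $\mu_{\Omega,\alpha}(f)(x)=\Vert Uf(x)\Vert_H$. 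Since $F_{\Omega,\alpha,t,b}(f)=b\,F_{\Omega,\alpha,t}(f)-F_{\Omega,\alpha,t}(bf)$, one has $[b,\mu_{\Omega,\alpha}](f)(x)=\Vert b(x)Uf(x)-U(bf)(x)\Vert_H=\Vert[b,U]f(x)\Vert_H$ with $[b,U]$ genuinely linear, and in particular $\Vert[b,\mu_{\Omega,\alpha}]f\Vert_{L_q(B)}=\Vert[b,U]f\Vert_{L_q(B;H)}$.

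The two inputs needed to run the machine on $[b,U]$ are then available. First, Minkowski's integral inequality yields the $H$-valued form of (\ref{e1}), namely $\Vert Uf(x)\Vert_H\le C\int_{{\mathbb{R}^{n}}}|\Omega(x-y)||x-y|^{-n+\alpha}|f(y)|\,dy$; this is exactly the computation already displayed in the text immediately preceding the corollary. Second, conditions (a)--(c) force $\Omega\in Lip_\gamma(S^{n-1})\subset L_\infty(S^{n-1})$, hence $\Omega\in L_s(S^{n-1})$ for every $1<s\le\infty$, and by \cite{TorWang} $\mu_{\Omega,\alpha}$ — equivalently $U$ — is bounded from $L_a({\mathbb{R}^{n}})$ to $L_b({\mathbb{R}^{n}};H)$ whenever $1<a<n/\alpha$ and $1/a-1/b=\alpha/n$, which covers both pairs $(p_1,q_1)$ and $(p,q)$ appearing in the statement. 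With these facts the proof of Lemma \ref{Lemma 5} transcribes verbatim to $[b,U]$: write $f=f_1+f_2$ with $f_1=f\chi_{2B}$, decompose $[b,U]f=J_1+J_2+J_3+J_4$, and bound $\Vert J_i\Vert_{L_q(B;H)}$ exactly as there, replacing scalar absolute values by $H$-norms and using Minkowski's inequality for the Bochner norm $L_q(B;H)$ wherever the scalar Minkowski inequality was used. This produces, in the regime $s'\le p$, the local estimate
\[
\Vert[b,\mu_{\Omega,\alpha}]f\Vert_{L_q(B(x_0,r))}\lesssim\Vert b\Vert_{LC_{p_2,\lambda}^{\{x_0\}}}\,r^{\frac nq}\int_{2r}^\infty\Bigl(1+\ln\frac tr\Bigr)t^{n\lambda-\frac{n}{q_1}-1}\Vert f\Vert_{L_{p_1}(B(x_0,t))}\,dt ,
\]
together with its $q_1<s$ counterpart.

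Finally I would feed this into Theorem \ref{teo13} exactly as in the proof of Theorem \ref{teo15}, with $v_2(r)=\varphi_2(x_0,r)^{-1}$, $v_1(r)=\varphi_1(x_0,r)^{-1}r^{-n/p_1}$, $g(r)=\Vert f\Vert_{L_{p_1}(B(x_0,r))}$ and $\omega(r)=r^{n\lambda-n/q_1-1}$ in the case $s'\le p$ (and the obvious variant with $\omega(r)=r^{n\lambda-n/q_1+n/s-1}$ and an extra $r^{-n/s}$ factor in $v_2$ in the case $q_1<s$). Hypothesis (\ref{47}) is precisely the assertion that the constant $B$ of (\ref{e53}) is finite, so (\ref{e52}) gives $\Vert[b,\mu_{\Omega,\alpha}]f\Vert_{LM_{q,\varphi_2}^{\{x_0\}}}\lesssim\Vert b\Vert_{LC_{p_2,\lambda}^{\{x_0\}}}\Vert f\Vert_{LM_{p_1,\varphi_1}^{\{x_0\}}}$; note that since $\Omega\in L_\infty(S^{n-1})$ one may always take $s=\infty$, in which case $s'=1\le p$ and (\ref{48}) collapses to (\ref{47}), so this single case already suffices. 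The only real obstacle is the sublinearity bookkeeping described above; once the $H$-valued linearization is in place no new estimation arises, every remaining step being a transcription of the already-proven Lemma \ref{Lemma 5} and Theorem \ref{teo15}.
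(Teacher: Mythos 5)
Your proof is correct, and it follows the paper's intended route: verify the size condition (\ref{e1}) for $\mu_{\Omega,\alpha}$ by Minkowski's inequality, quote the $(L_{p},L_{q})$ boundedness from \cite{TorWang}, and apply Theorem \ref{teo15}. What you add, and what the paper leaves entirely implicit, is the justification for applying Theorem \ref{teo15} at all: that theorem and Lemma \ref{Lemma 5} are stated for a \emph{linear} $T_{\Omega,\alpha}$, and their proofs rest on the decomposition $T_{\Omega,b,\alpha}f=J_{1}+J_{2}+J_{3}+J_{4}$, which has no direct meaning for the sublinear $\mu_{\Omega,\alpha}$ and for the commutator $[b,\mu_{\Omega,\alpha}]$ defined by inserting $b(x)-b(y)$ inside the kernel. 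Your $H$-valued linearization $Uf=(F_{\Omega,\alpha,t}(f))_{t>0}$, the identity $F_{\Omega,\alpha,t,b}(f)=bF_{\Omega,\alpha,t}(f)-F_{\Omega,\alpha,t}(bf)$ giving $[b,\mu_{\Omega,\alpha}](f)(x)=\Vert[b,U]f(x)\Vert_{H}$, and the observation that every step of Lemma \ref{Lemma 5} survives the replacement of absolute values by $H$-norms (with Minkowski's inequality for the Bochner norm where the scalar one was used) supply exactly the missing bridge; the paper's one-line derivation of this corollary is legitimate only modulo such an argument. The one point I would soften is the closing aside that one may always take $s=\infty$ so that the case $s^{\prime}\leq p$ with (\ref{47}) ``already suffices'': the corollary's hypothesis may supply only (\ref{48}) for a specific finite $s$ with $q_{1}<s$, and (\ref{47}) and (\ref{48}) are genuinely different conditions for finite $s$, so the $q_{1}<s$ branch is not redundant --- but since you carry out both branches in the main argument, this remark is harmless.
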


\begin{corollary}
Suppose that $x_{0}\in{\mathbb{R}^{n}}$, $\Omega \in L_{s}(S^{n-1})$,
$1<s\leq \infty$, is homogeneous of degree zero. Let $0<\alpha<n$,
$1<p<\frac{n}{\alpha}$, $b\in LC_{p_{2},\lambda}^{\left \{  x_{0}\right \}
}\left(
\mathbb{R}
^{n}\right)  $, $0\leq \lambda<\frac{1}{n}$, $\frac{1}{p}=\frac{1}{p_{1}}%
+\frac{1}{p_{2}}$, $\frac{1}{q}=\frac{1}{p}-\frac{\alpha}{n}$, $\frac{1}%
{q_{1}}=\frac{1}{p_{1}}-\frac{\alpha}{n}$. Let also, for $s^{\prime}\leq p$
the pair $\left(  \varphi_{1},\varphi_{2}\right)  $ satisfies conditions
(\ref{2})-(\ref{3})-(\ref{4.7}) and (\ref{6*})-(\ref{7*}) and for $q_{1}<s$
the pair $\left(  \varphi_{1},\varphi_{2}\right)  $ satisfies conditions
(\ref{2})-(\ref{3})-(\ref{4.7}) and (\ref{8*})-(\ref{9*}) and $\Omega$
satisfies conditions (a)--(c). Then, the operator $[b,\mu_{\Omega,\alpha}] $
is bounded from $VLM_{p_{1},\varphi_{1}}^{\left \{  x_{0}\right \}  }$
to$VLM_{q,\varphi_{2}}^{\left \{  x_{0}\right \}  }$.
\end{corollary}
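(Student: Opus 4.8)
The plan is to deduce this statement from Theorem \ref{teo11} (for the vanishing conclusion) together with Theorem \ref{teo15} (for the underlying norm bound), by verifying that the sublinear commutator $[b,\mu_{\Omega,\alpha}]$ fits into the framework built there. First I would record the structural facts about $\Omega$: if $\Omega$ satisfies (a)--(c), then in particular $\Omega\in Lip_{\gamma}(S^{n-1})\subset L_{\infty}(S^{n-1})\subset L_{s}(S^{n-1})$ for every $1<s\le\infty$, so the hypothesis $\Omega\in L_{s}(S^{n-1})$ is automatic and one may work with whichever admissible $s$ is convenient (both the regime $s'\le p$ and the regime $q_{1}<s$ being genuinely available). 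Moreover, the mean-zero condition (b) together with (a) and (c) is exactly what guarantees, by \cite{TorWang}, that $\mu_{\Omega,\alpha}$ is bounded from $L_{p}({\mathbb{R}^{n}})$ to $L_{q}({\mathbb{R}^{n}})$ for $p>1$; applying this with the exponent pairs $(p_{1},q_{1})$ and $(p,q)$ supplies the boundedness inputs needed below.

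The key reduction is a pointwise domination of the sublinear commutator by the positive rough fractional-integral commutator. Writing $[b,\mu_{\Omega,\alpha}](f)(x)=\Vert F_{\Omega,\alpha,t,b}(f)(x)\Vert$ and applying the Minkowski inequality in the norm of $H$ exactly as was done for $\mu_{\Omega,\alpha}$ itself just before these corollaries, I would obtain
\[
[b,\mu_{\Omega,\alpha}](f)(x)\le C\int\limits_{{\mathbb{R}^{n}}}\frac{|b(x)-b(y)|\,|\Omega(x-y)|}{|x-y|^{n-\alpha}}\,|f(y)|\,dy.
\]
Then, splitting $f=f_{1}+f_{2}$ as in (\ref{e39}), writing $b(x)-b(y)=(b(x)-b_{B})-(b(y)-b_{B})$, and using that $f\mapsto F_{\Omega,\alpha,t,b}(f)$ is linear while $\Vert\cdot\Vert$ is subadditive, I would get the pointwise bound
\[
[b,\mu_{\Omega,\alpha}](f)(x)\le |b(x)-b_{B}|\,\mu_{\Omega,\alpha}(f_{1})(x)+\mu_{\Omega,\alpha}((b-b_{B})f_{1})(x)+|b(x)-b_{B}|\,\mu_{\Omega,\alpha}(f_{2})(x)+\mu_{\Omega,\alpha}((b-b_{B})f_{2})(x),
\]
which is precisely the $J_{1}+J_{2}+J_{3}+J_{4}$ splitting used in the proof of Lemma \ref{Lemma 5}. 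The terms $J_{1},J_{2}$ are treated by H\"{o}lder's inequality, the $(L_{p_{1}},L_{q_{1}})$- and $(L_{p},L_{q})$-boundedness of $\mu_{\Omega,\alpha}$ and the local Campanato estimates (\ref{a})--(\ref{c}) of Lemma \ref{Lemma 4}; the tail terms $J_{3},J_{4}$ use only the displayed pointwise kernel bound, estimate (\ref{e311}) with the uniform (Lipschitz) bound on $\Omega$, Fubini, H\"{o}lder/Minkowski and Lemma \ref{Lemma 4}. This reproduces the local estimate (\ref{200}) — and its $q_{1}<s$ counterpart — verbatim with $[b,\mu_{\Omega,\alpha}]$ in place of $T_{\Omega,b,\alpha}$.

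Granting (\ref{200}) for $[b,\mu_{\Omega,\alpha}]$, the rest follows exactly as in Theorems \ref{teo15} and \ref{teo11}. Applying Theorem \ref{teo13} with $v_{2}(r)=\varphi_{2}(x_{0},r)^{-1}$, $v_{1}(r)=\varphi_{1}(x_{0},r)^{-1}r^{-n/p_{1}}$ and $\omega(r)=r^{n\lambda-n/q_{1}-1}$ (respectively with the $n/s$-shifted exponents in the regime $q_{1}<s$) gives
\[
\Vert[b,\mu_{\Omega,\alpha}]f\Vert_{LM_{q,\varphi_{2}}^{\{x_{0}\}}}\lesssim\Vert b\Vert_{LC_{p_{2},\lambda}^{\{x_{0}\}}}\Vert f\Vert_{LM_{p_{1},\varphi_{1}}^{\{x_{0}\}}}
\]
under (\ref{47}) (resp. (\ref{48})); and the vanishing property is obtained by the splitting $\int_{r}^{\infty}=\int_{r}^{\delta_{0}}+\int_{\delta_{0}}^{\infty}$, estimating the first integral uniformly in $r\in(0,\delta_{0})$ using $f\in VLM_{p_{1},\varphi_{1}}^{\{x_{0}\}}$ together with (\ref{7*}) (resp. (\ref{9*})), and the second using (\ref{6*}) (resp. (\ref{8*})), condition (\ref{4.7}) and the bound $1+\ln\frac{t}{r}\le 1+|\ln t|+\ln\frac{1}{r}$. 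This yields the boundedness of $[b,\mu_{\Omega,\alpha}]$ from $VLM_{p_{1},\varphi_{1}}^{\{x_{0}\}}$ to $VLM_{q,\varphi_{2}}^{\{x_{0}\}}$ and the estimate (\ref{10*}).

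The main obstacle I anticipate is bookkeeping rather than a new idea: Theorem \ref{teo11} and Lemma \ref{Lemma 5} are phrased for a genuinely linear operator $T_{\Omega,\alpha}$ and its algebraic commutator, whereas $[b,\mu_{\Omega,\alpha}]$ is sublinear and is defined through the $H$-valued quantity $F_{\Omega,\alpha,t,b}$. One must therefore check carefully that every place in the proof of Lemma \ref{Lemma 5} where linearity of $T_{\Omega,\alpha}$ is invoked — i.e. the passage to $J_{1}+J_{2}+J_{3}+J_{4}$ — is legitimately replaced by linearity of $f\mapsto F_{\Omega,\alpha,t,b}(f)$ followed by the triangle inequality for $\Vert\cdot\Vert$, and that the resulting positive majorant is still controlled by $\mu_{\Omega,\alpha}$ applied to $f_{1}$, $f_{2}$, $(b-b_{B})f_{1}$ and $(b-b_{B})f_{2}$ — all of which are handled by the cited $L_{p}\to L_{q}$ bounds together with the pointwise kernel estimate. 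Once this substitution is verified, no further difficulty arises, and the same reasoning also covers $M_{\Omega,b,\alpha}$ (needed for the preceding corollary) since $M_{\Omega,b,\alpha}f$ obeys the same pointwise kernel bound.
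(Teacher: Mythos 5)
Your proposal is correct and follows essentially the same route as the paper, which simply verifies that $\mu_{\Omega,\alpha}$ satisfies the size condition (\ref{e1}) and the $(L_{p},L_{q})$-boundedness from \cite{TorWang} and then invokes Theorem \ref{teo11}. Your additional check — that the $J_{1}+J_{2}+J_{3}+J_{4}$ decomposition of Lemma \ref{Lemma 5} survives when the linear commutator is replaced by the $H$-valued sublinear commutator, via linearity of $f\mapsto F_{\Omega,\alpha,t,b}(f)$ and the triangle inequality for $\Vert\cdot\Vert$ — is a legitimate point the paper glosses over, and your treatment of it is sound.
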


\end{document}